\crefname{hypothesis}{Hypothesis}{Hypotheses}
\title{An Optimal Mass Transport Method for Random Genetic Drift}
\author{Jos{\'e} A. Carrillo\thanks{Mathematical Institute, University of Oxford, Oxford OX2 6GG, United Kingdom;
  (\email{carrillo@maths.ox.ac.uk}).}
\and Lin Chen\thanks{Department of Mathematics, Southwestern University of Finance and Economics, 555 Liutai Ave, Wenjiang, Chengdu, Sichuan 611130, China
  (\email{lchen@smail.swufe.edu.cn }).}
\and Qi Wang\thanks{Department of Mathematics, Southwestern University of Finance and Economics, 555 Liutai Ave, Wenjiang, Chengdu, Sichuan 611130, China
  (\email{qwang@swufe.edu.cn }).}}
\begin{document}

\maketitle

\begin{abstract}
We propose and analyze an optimal mass transport method for a random genetic drift problem driven by a Moran process under weak-selection.  The continuum limit, formulated as a reaction-advection-diffusion equation known as the Kimura equation, inherits degenerate diffusion from the discrete stochastic process that conveys to the blow-up into Dirac-delta singularities hence brings great challenges to both the analytical and numerical studies. The proposed numerical method can quantitatively capture to the fullest possible extent the development of Dirac-delta singularities for genetic segregation on one hand, and preserves several sets of biologically relevant and computationally favored properties of the random genetic drift on the other.  Moreover, the numerical scheme exponentially converges to the unique numerical stationary state in time at a rate independent of the mesh size up to a mesh error.  Numerical evidence is given to illustrate and support these properties, and to demonstrate the spatio-temporal dynamics of random generic drift.
\end{abstract}

\begin{keywords}
mass transportation methods, long-time asymptotics, genetic drift models, Kimura equation
\end{keywords}

\begin{AMS}
65M06, 
49M15, 
92D25  
\end{AMS}

\section{Introduction}

In population genetics, genetic drift describes random fluctuations in the numbers of gene variants (alleles) over time.  Allele frequency, expressed as a percentage, measures the relative fraction of an allele at a particular locus in the population, and its change quantifies the intensity of the random genetic drift \cite{Gillespie2004}.  Typically, when genetic drift begins, it will continue until either i) the involved allele completely disappears from the population or ii) the allele establishes permanently at 100\% frequency (called fixed).  In either case, genetic drift causes gene variants to disappear because infrequently occurring alleles face a greater chance of being lost in a small population, or causes a new population genetically distinct from its original population such that initially rare alleles become much more frequent and even fixed \cite{Gillespie2004,Rice1987,StarSpencer2013,Waxman2009}.  Both events indicate that genetic drift can decrease the population's genetic diversity, and it plays a role in the evolution of new species.

Mathematical modeling of genetic drift dates back to the pioneering works of Ronald Fisher \cite{Fisher1922} and Sewall Wright \cite{Wright1929,Wright1937,Wright1945}.  The Wright--Fisher model employs a discrete stochastic process to model dynamics of finite populations at the individual level under the restrictions that the generations do not overlap and that each copy of the gene of the new generation is selected independently and randomly from the whole gene pool of the previous generation.  It is later modified and extended by Patrick Moran \cite{Moran,Moran1962} (allowing generations overlap) and Komoo Kimura \cite{Kimura1955a,Kimura1955b,Kimura1962,Kimura1964} (allowing the genetic mutation to spread across the population).  In particular, they show that, in the limit of a large population and weak selection, these processes can be approximated by the same diffusion approximation, namely the Kimura equation \cite{Kimura1962} which describes the probability of fixation of a mutant with frequency-independent fitness.  The continuum framework makes a systematic qualitative and quantitative analysis of the new model possible thanks to the tools from modern analytical and numerical analysis.

In this paper, we consider the Moran process as a paradigm and introduce its large population limits with different drift-diffusion scalings assumption \cite{Chalub2009}.  Consider the dynamics of a population with $N$ individuals that are distinguished by two neutral alleles labeled $A$ and $B$ such that they do not affect the survival and reproduction ability of the individual.  In light of the balance between selection and drift, Traulsen \emph{et al.} \cite{Traulsen2005} summarized the process into three simple steps: (a) \emph{selection}--an individual is randomly selected for reproduction with a probability proportional to its fitness;  (b) \emph{reproduction}--the selected individual produces one identical offspring; (c) \emph{replacement}--the offspring replaces a randomly selected individual in the population. The process is then repeated after each time step $\Delta t$.

\begin{figure}[H]
  \centering
  \includegraphics[width=0.32\textwidth]{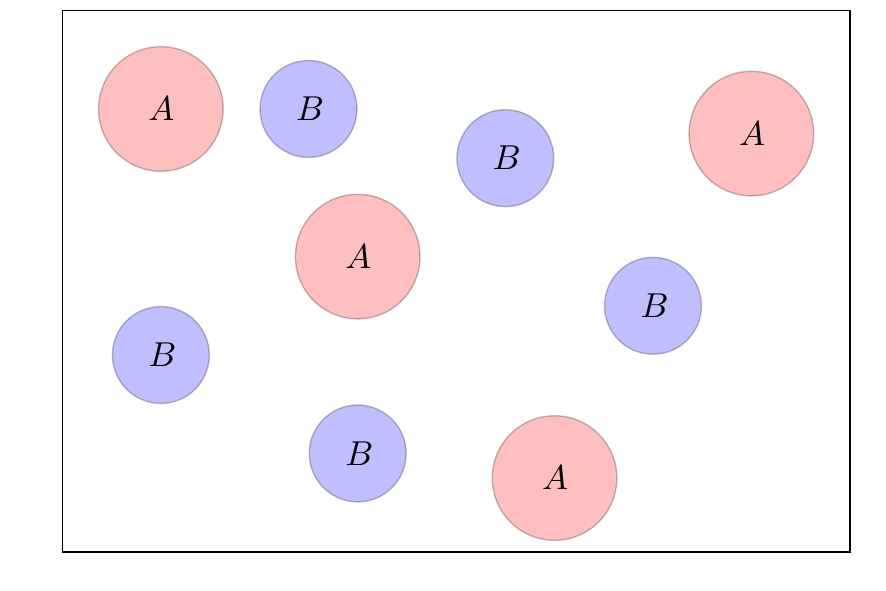}
    \includegraphics[width=0.32\textwidth]{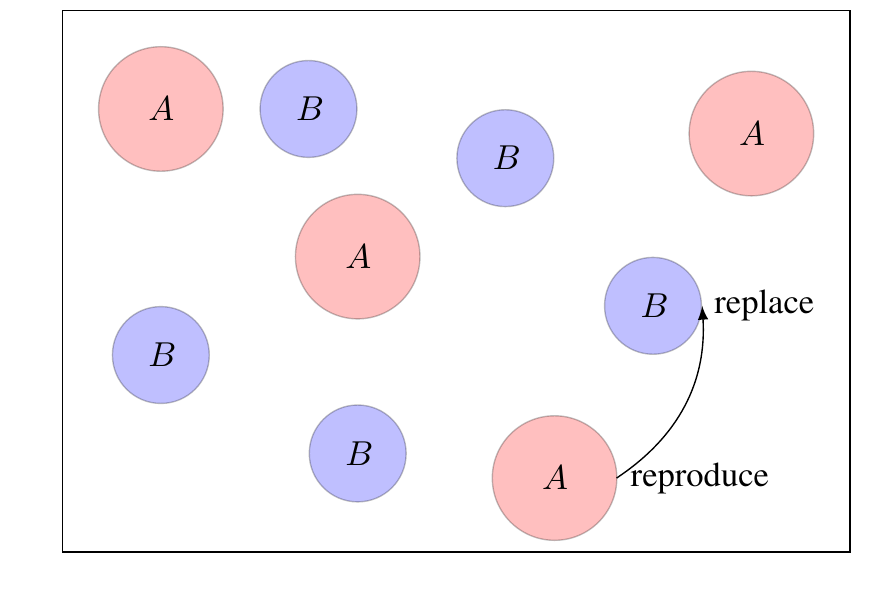}
      \includegraphics[width=0.32\textwidth]{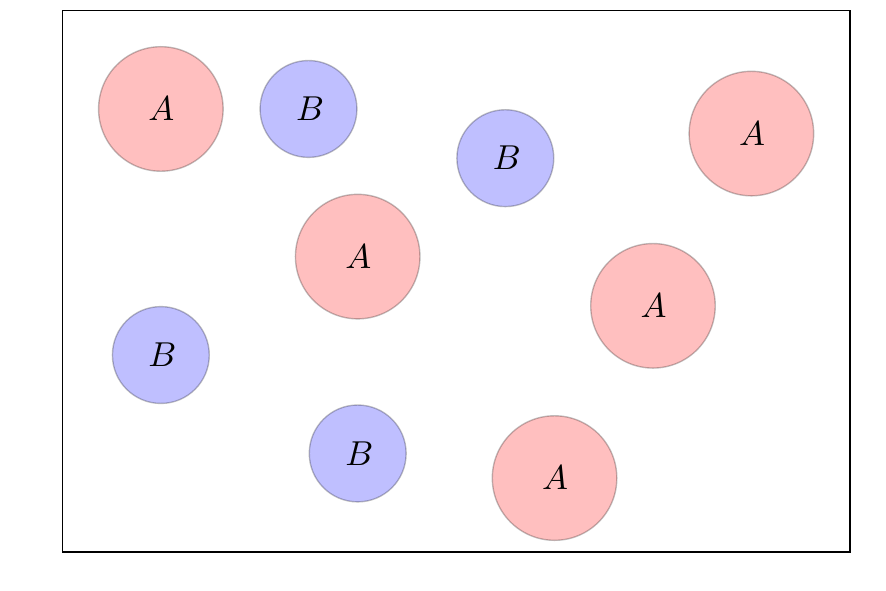}
  \caption{Mutation between two alleles $A$ and $B$ through a Moran process.}
\end{figure}

The fitness or reproduction rate of an individual depends on environmental conditions as well as the type and frequency of its competitors. For each type, we associate a fitness function depending on the type frequency:
$$\Psi^{(A)}(x;N,\Delta t),\; \Psi^{(B)}(x;N,\Delta t): [0,1]\rightarrow \mathbb R^+,$$
through the weak selection principle for $i=A,B$
\begin{equation*}
  \Psi^{(i)}(x;N,\Delta t)\approx 1+(\Delta t)^{\nu} \psi^{(i)}(x)+o((\Delta t)^{\nu}), \quad N \approx \infty,\;\Delta t \approx  0.\; 
\end{equation*}
In particular, when $\Psi$ is constant, i.e., the fitness of an individual is genetically determined and not affected by interactions, one recovers the classical frequency-independent Moran process.

Under the assumption $N^{-1}\propto (\Delta t)^{\mu}$ and a suitable time rescaling, as $N\rightarrow \infty$, one collects from \cite{Chalub2009,Kimura1964,Traulsen2005} the following thermodynamical limit for the density $f(x,t)$ of allele $A$ when $\nu=\mu =\frac{1}{2}$
\begin{equation}\label{driftdiff}\tag{1.1}
f_t=\frac{\kappa}{2}\big(x(1-x)f\big)_{xx}+\big(x(1-x)V'(x)f\big)_x,\qquad \qquad-\text{\textbf{Replicator-Diffusion equation}}
\end{equation}
where $\frac{1}{N(\Delta t)^\mu}\rightarrow\kappa>0$ is a constant, and $V(x):[0,1]\rightarrow \mathbb{R}$ is called the fitness potential such that $V'(x):=\psi^{(B)}(x)-\psi^{(A)}(x)$ measures the fitness difference between the focal and opponent.  (\ref{driftdiff}) nests the following purely diffusive or advective equation with $V\equiv0$ or $\kappa=0$
\begin{subequations}
\begin{align}
f_t&=\big(x(1-x)f\big)_{xx}, \qquad \qquad \qquad \qquad -\text{\textbf{Diffusion equation}}\label{diff}\tag{1.1a}\\ f_t&=\big(x(1-x)V'(x)f\big)_x, \qquad\qquad \qquad -\text{\textbf{Replicator equation}}\label{drift}\tag{1.1b}
\end{align}
\end{subequations}
as two special cases such that genetic drift is the only evolutionary force in (\ref{diff}), and the evolutionary force includes genetic mutation, migration and selection in (\ref{drift}).  They correspond to the limits of discrete process with $\nu >\mu =\frac{1}{2}$ and $\mu \in (\frac{1}{2},1]$, $\nu=1-\mu$, respectively. 

The biological significance urges us to impose the non-flux boundary condition to each of \eqref{driftdiff} 
\begin{equation}\label{NBC}
\frac{\kappa}{2}\big(x(1-x)f\big)_{x}+\big(x(1-x)V'(x)f\big)=0, \quad x=0,1, \forall t>0
\end{equation}
such that the following conservation holds
\begin{equation}\label{conser0}
    \frac{d}{dt} \int_{0}^{1} f(x,t) dx =0, \forall t>0
\end{equation}
and leads to well-defined evolution of the probability measure.  Moreover, a second conservation law applies to the replicator-diffusion equation \eqref{driftdiff} and the diffusion equation \eqref{diff}, and it reads
\begin{equation}\label{conser}
  \frac{d}{dt} \int_{0}^{1} \theta(x) f(x,t) dx =0, \forall t>0,
\end{equation}
where $\theta(x)$ is the fixation probability function that satisfies
$$\frac{\kappa}{2} \theta''(x)-V'(x) \theta'(x)=0, \quad \theta(0)=0,\quad \theta(1)=1,$$
and it can be explicitly given by
\begin{equation}\label{thetaV}
  \theta(x)=\frac{\int_{0}^{x} e^{\frac{2}{\kappa} V(y)} dy}{\int_{0}^{1} e^{\frac{2}{\kappa} V(y)} dy}.
\end{equation}
One notes that \eqref{conser} recovers the conservation of total population for \eqref{driftdiff} with $\theta(x)\equiv 1$, and conservation of mass center for \eqref{diff} with $\theta(x)=x$.

In this work, we will take advantage of the fact that the following free energy admits \eqref{driftdiff} as a gradient flow with respect to a variation of optimal transport distances (see e.g. \cite{Vi03,CMV03,CLSS10,Chalub}) 
\begin{equation}\label{entropy}
  E(f)=\frac{\kappa}{2}\int_{0}^{1} f(x,t) \ln \big(x(1-x)f(x,t)\big)dx +\int_{0}^{1}V(x)f(x,t)dx
\end{equation}
and that \eqref{driftdiff} can be rewritten as
\begin{equation}\label{gradflow} 
f_t =- \text{grad}_w {E}(f) \qquad \mbox{with} \quad
\text{grad}_w {E}(f):=-\left(x(1-x)f(x,t) \left(\frac{\delta {E}}{\delta f} \right)_x \right)_x,
\end{equation}
where $\frac{\delta {E}}{\delta f}=\frac{\kappa}{2}\ln(x(1-x)f)+V(x)$ denotes the first variation of the energy $E$ with a fixed mass constraint.

We now recall several theoretically relevant results on the well-posedness of \eqref{driftdiff}-\eqref{NBC} by \cite{Chalub2009a,Chalub2009,Tran2013}.  Let us denote by $\mathcal{BM}^+([0,1])$ the space of functions with positive Radon measure in $[0,1]$.  A function $f(x,t)\in L^{\infty}([0,\infty);\mathcal{BM}([0,1]))$ is called a \emph{weak solution} of (\ref{driftdiff}) and (\ref{NBC}) according to \cite{Chalub2009} if the following identity holds for any test function $\zeta(x,t)\in C_c^{\infty} ([0,\infty)\times [0,1])$
\begin{equation*}
  -\int_{0}^{\infty}\int_{0}^{1} f(x,t)\zeta_t(x,t) dx dt  =\int_{0}^{\infty}\int_{0}^{1} x(1-x)\left( \frac{\kappa}{2}\zeta_{xx}(x,t)-V'(x)\zeta_x(x,t) \right) f(x,t) dx dt
  +\int_{0}^{1} f_0(x) \zeta(x,0) dx.
\end{equation*}
Then \eqref{driftdiff} with conservation laws \eqref{conser0}-\eqref{conser} is well-posed as follows according to \cite{Chalub2009a}:
\begin{theorem}\label{theorem11}
For any given $f_0 \in \mathcal{BM}^+([0,1])$, \eqref{driftdiff} under \eqref{NBC} admits a unique weak solution $f(x,t)$ such that $f\in L^{\infty}\big([0,\infty);\mathcal{BM}^+([0,1])\big)\cap C^{\infty}\big( \mathbb R^+,C^{\infty}((0,1)) \big)$, and it satisfies the conservation laws \eqref{conser0}-\eqref{conser}.  Moreover, the solution can be written as
  $$f(x,t)=r(x,t)+a(t) \delta_0(x) +b(t) \delta_1(x),$$
  where $r\in C^{\infty}\big(\mathbb R^+;C^{\infty}([0,1]) \big)$ is the classical solution to \eqref{driftdiff} without boundary conditions, functions $a(t)$ and $b(t)\in C([0,\infty)) \cap C^{\infty} (\mathbb R^+)$ are monotonically increasing, and $\delta_y$ is the singular measure supported at $y$.
Furthermore, as $t\rightarrow \infty$, $r(x,t)\rightarrow0$ uniformly and
$$ f(\cdot,t)\rightarrow f_\infty(x):=\left(1-\int_{0}^{1}f_0(x) \theta(x)dx \right) \delta_0(x) +\left(\int_{0}^{1}f_0(x) \theta(x)dx\right)\delta_1(x),$$
exponentially fast with respect to a transport metric, where $\theta$ is given by \eqref{thetaV}.
\end{theorem}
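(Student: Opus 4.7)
My plan is to follow the standard program of a vanishing-viscosity construction, extract the decomposition and conservation structure from the weak formulation, and then obtain the long-time asymptotics from the gradient-flow structure \eqref{gradflow}. For \emph{existence}, I would regularize the degenerate coefficient by $\sigma_\epsilon(x) := x(1-x)+\epsilon$ and mollify $f_0 \in \mathcal{BM}^+([0,1])$ into a smooth $f_0^\epsilon$ of unit mass. Classical uniformly parabolic theory with the corresponding no-flux boundary condition provides smooth positive solutions $f^\epsilon$ that conserve mass. Uniform bounds in $L^\infty_t\mathcal{BM}^+_x$ then follow from mass conservation together with the decay of the modified free energy \eqref{entropy} with $\sigma_\epsilon$ in place of $x(1-x)$. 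Weak-$*$ compactness yields a limit $f$, and passage to the limit in the linear weak formulation is immediate since $x(1-x)$, $V$ and $V'$ are bounded on $[0,1]$. For \emph{uniqueness}, I would argue by duality: given two weak solutions, test their difference against a smooth solution of the backward adjoint problem $\zeta_t + x(1-x)\bigl(\frac{\kappa}{2}\zeta_{xx} - V'(x)\zeta_x\bigr)=0$ with arbitrary smooth terminal data on $[0,1]$, whose well-posedness is classical.

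For the conservation laws and the decomposition, testing the weak form with $\zeta\equiv 1$ yields \eqref{conser0}, while a time-independent choice $\zeta=\theta(x)$ kills the right-hand side by the very definition of $\theta$ in \eqref{thetaV} and gives \eqref{conser} after an approximation in time. On any compact subinterval $[a,b]\subset(0,1)$ the equation is uniformly parabolic with smooth coefficients, so Schauder and parabolic bootstrap estimates produce the claimed regularity $r\in C^\infty(\mathbb R^+;C^\infty((0,1)))$; consequently any singular part of $f(\cdot,t)$ is supported in $\{0,1\}$, and one sets $a(t)=f(\{0\},t)$, $b(t)=f(\{1\},t)$. Monotonicity of $a,b$ is inherited from the absorbing-state character of $\{0,1\}$: both the diffusion $x(1-x)$ and the drift $x(1-x)V'(x)$ vanish at the endpoints, so mass which concentrates there cannot be re-injected into the interior, and this can be made rigorous by testing the equation against a smooth cutoff near each endpoint and sending the cutoff scale to zero.

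For the long-time asymptotics, the two conservation laws identify $f_\infty$ uniquely among measures supported on $\{0,1\}$. Exponential convergence would then follow from an entropy--dissipation inequality along the gradient flow \eqref{gradflow} with respect to the weighted transport distance whose mobility is precisely $x(1-x)f$, combined with a functional inequality of log-Sobolev type that controls the vanishing of the smooth interior part $r$. \emph{The main obstacle} is exactly this last step: because the diffusion degenerates at the very points where the limit measure concentrates, classical displacement convexity of $E$ is unavailable, and one must exploit the Lyapunov role of $\theta$ to turn the degeneracy of the mobility into a spectral-gap-type coercivity, which is the route taken in \cite{Chalub2009a}.
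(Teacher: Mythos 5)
First, a point of comparison: the paper does not prove this theorem at all. It is imported verbatim from \cite{Chalub2009a} (``\ldots is well-posed as follows according to \cite{Chalub2009a}''), so there is no in-paper argument to measure your proposal against; what follows is an assessment of your sketch on its own terms and against the cited source.

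The decisive gap is the final claim, the exponential convergence $f(\cdot,t)\to f_\infty$ in a transport metric. You explicitly flag this as ``the main obstacle'' and propose to resolve it by an entropy--dissipation inequality for the gradient flow \eqref{gradflow} combined with a log-Sobolev-type inequality. That route is precisely what this paper identifies as open: displacement convexity of $E$ fails because the mobility $x(1-x)f$ degenerates exactly where the limit measure concentrates, and the paper states that convergence of the variational scheme including the degenerate diffusion term is an open problem. A proof that ends by naming the obstacle has not proven the statement. The reference instead obtains the rate from a spectral analysis of the Kimura operator acting on the \emph{regular} part $r$: the eigenvalue problem \eqref{eigenphi} (see the paper's remark after \cref{theorem_decay1}, principal eigenvalue $\lambda=2$ in the pure-diffusion case) gives uniform exponential decay of $r$, and the conservation laws \eqref{conser0}--\eqref{conser} then force the boundary masses $a(t),b(t)$ to converge to the stated coefficients, which yields convergence of $f$ to $f_\infty$ in the transport metric. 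Nothing in your sketch produces a rate.

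Two further steps are asserted but not supplied by the tools you invoke. The theorem requires $r\in C^\infty\bigl(\mathbb R^+;C^\infty([0,1])\bigr)$, i.e.\ smoothness up to the \emph{closed} interval; interior Schauder and bootstrap estimates on $[a,b]\subset(0,1)$ only give $C^\infty((0,1))$, and boundary regularity for the degenerate Kimura operator needs the dedicated degenerate-parabolic theory. This is not cosmetic: your monotonicity argument for $a(t),b(t)$ (flux through a shrinking cutoff, which in the limit is $\dot a(t)=\tfrac{\kappa}{2}r(0,t)\ge 0$) requires precisely this boundary regularity of $r$. Similarly, the backward adjoint problem $\zeta_t+x(1-x)\bigl(\tfrac{\kappa}{2}\zeta_{xx}-V'\zeta_x\bigr)=0$ used for uniqueness is degenerate at $x=0,1$, so its solvability with smooth terminal data up to the boundary is not ``classical'' uniformly parabolic theory and must be justified by the same degenerate machinery. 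The existence-by-regularization and the derivation of the conservation laws by testing with $\zeta\equiv 1$ and $\zeta=\theta$ are fine.
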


According to Theorem \ref{theorem11}, we always expect (\ref{driftdiff}) to collapse into the Dirac-delta singularity regardless of the initial distribution.  The long-time dynamics along with this spiky spatial profile well demonstrate and capture the formation of gene segregation, i.e., an asymptotic gene fixation with allele $A$ (resp. allele $B$) occupying the whole population when $\int_0^Lf_0(x)\theta(x)dx=0$ (resp. 1).

More recently, the gradient flow structure \eqref{gradflow} has been discussed in \cite{Chalub}.  The analysis of the model is built upon the classical steepest descent variational schemes for nonlinear Fokker--Planck equations which are introduced in \cite{gf-JKO} and generalized in \cite{gf-AGS,BCC2008}.  The transport distance between probability measures has to be adapted to the degenerate diffusion coefficient of \eqref{driftdiff}.  They introduced a suitable distance called the Shahshahani distance showing the convergence of the variational scheme for the specific case of replicator dynamics, i.e. \eqref{driftdiff} without the degenerate diffusion term. It is an open problem to show the convergence of this variational scheme including the degenerate diffusion term.


The spatial-temporal dynamics of the Kimura equation are well understood in the purely diffusive case.  However, the singularity induced by the convergence towards Dirac-delta concentrations, which well model the biologically realistic gene segregation or fixation through the drift, imposes substantial challenges to their numerical approximation, in particular for long-time computations. Therefore, one of the crucial features of any numerical scheme to solve \eqref{gradflow}, while keeping its biological relevance, is to accurately capture the concentration phenomena at the discrete level.
Lagrangian numerical schemes for these gradient flow interpretations of one-dimensional Fokker--Planck, thin-film and quantum drift equations have been devised by various authors, see e.g. \cite{gf-GT1,gf-GT2,BCC2008,gf-WW,gf-CN,CRW2016,gf-MOfp,gf-MOtf,gf-MOdlss,gf-Osberger}. We here build upon the strategy of solving an equivalent equation satisfied by the diffeomorphisms mapping the initial data to the solution at later times as performed in \cite{BCC2008,CM2009,CRW2016,CKM2019}, see next section for details.  We refer to \cite{CMW20} for a very recent review on the state of the art of this kind of numerical schemes. The main advantage of the present approach is being able to deal with Dirac-delta concentrations easily and rigorously now that we work in mass variables. This merit is also utilized by \cite{BCC2008,CRW2016,CKM2019} to study blow-ups in Keller--Segel models for chemotactic cellular aggregation.

Other numerical schemes have recently been proposed in the literature to solve the Kimura equation \eqref{driftdiff}. In \cite{DLWY2019} the authors introduced a numerical method based on Lagrangian maps that preserves the free energy decay of the system.  They also analyze a convex-concave splitting approach which leads to an implicit method, and prove the unique solvability of this method.  However, the equation is solved in the original variables therein, so they have to devise heuristic criteria to capture the concentration of mass towards the endpoints.  Moreover, they do not take full advantage of the gradient flow structure \eqref{gradflow} and the variational schemes at the discrete level.  As an alternative, our approach is to construct numerical schemes directly on the optimal maps and can describe the Dirac-delta to the fullest extent.  See details in Section 2.  \cite{XCLZY20192019} performs an interesting ``horse race" comparison of a serial of finite volume and finite element schemes for (\ref{diff}).  Their critical comparison of the long-time asymptotic performance urges carefulness in choosing a numerical method for this type of problem, especially when the main properties of the model are not kept by the scheme.  We also want to mention that the genetic drift problem with multi-alleles, cast as a multi-dimensional PDE, is investigated in \cite{XCLY2019} through finite-difference methods, where the authors propose a numerical scheme with absolute stability and conserve several biologically/physically relevant quantities such as positivity, total probability, and conservation.   

The rest of this paper is organized as follows.  Section 2 introduces the evolution problem for the map in mass variables as in \cite{BCC2008,CRW2016,CMW20} together with the needed prerequisites in optimal transport theory. Since the free-energy functional shows again in the new variables a non-convex structure, we present our numerical method and then apply it for the genetic-drift problem by introducing the convex-splitting technique as the preprocessing step of the Euler implicit scheme.  Section 3 is devoted to the analysis of the properties of the numerical scheme. We show that the discrete problem converges exponentially fast to a unique stationary state with a monotone drift $V$ for any fixed discretization parameter modulo error terms in spatial discretization, and we characterize all possible stationary states and limit points otherwise.   See \cref{theorem_decay1} and \cref{theorem_decay2} for our main results.  Finally, Section 4 conducts several sets of numerical experiments to validate the shown properties and to accurately illustrate the long-time dynamics of random genetic drift.

\section{Numerical Methods}
In this section, we recast the random generic drift models into an evolution problem for map of the mass variable, and then propose a numerical scheme to solve the new equation.  Since the associated new free-energy functional is non-convex, we introduce the convex-splitting technique as the preprocessing step of the Euler implicit scheme in its numerical solver.

\subsection{Optimal transport and the Wasserstein distance}
We first introduce a suitable Wasserstein distance in the probability space $\mathcal{P}([0,1])$ such that equation \eqref{driftdiff} can be interpreted as a gradient flow of the free energy \eqref{entropy}.
Due to the presence of the variable coefficient $x(1-x)$ in \eqref{driftdiff}, the quadratic Wasserstein distance will not be based on the usual Euclidean distance but on the induced generalized Shahshahani distance
\begin{equation}\label{d}
  d^2(x,y):= \inf_{\substack{\xi\in C^1([0,1];\Omega) \\ \xi(0)=x, \xi(1)=y}} \int_{0}^{1}|\xi'(t)|^2_{T_{\xi(t)}\Omega} dt = \inf_{\substack{\xi\in C^1([0,1];\Omega) \\ \xi(0)=x, \xi(1)=y}} \int_{0}^{1}   \frac{|\xi'(t)|^2}{\xi(t)(1-\xi(t))}  dt,
\end{equation}
for $x,y \in \Omega=(0,1)$.
Chalub \emph{et al.} \cite[Lemma 10]{Chalub} prove that the infimum in \eqref{d} is achieved at a unique constant-speed geodesic, and $d$ can be uniformly extended to $\bar{\Omega}\times \bar{\Omega}$ as
  \begin{equation*}
    d(x,y)=\left| \int_{x}^{y}\frac{d u}{\sqrt{u(1-u)}} \right|=\left| \arcsin (2y-1)-\arcsin(2x-1) \right|, \quad
  x,y \in [0,1];
  \end{equation*}
moreover, $d$ defines a distance in $\bar{\Omega}$ and the metric space $(\bar{\Omega}, d)$ is Polish.
This Shahshahani distance is locally equivalent to the Euclidean one in the interior, but behaves differently close to the boundary. This difference is reflected by the dynamics of replicator-diffusion equation \eqref{driftdiff}, which is locally uniformly parabolic in the interior, but degenerate at the boundaries.

We now present several concepts from transport theory essential for this paper.  Let $\mu$ and $\nu$ be two absolutely continuous measures with respect to the Lebesgue in $\mathcal{P}([0,1])$, and $T$ be a measurable map from $[0,1]\rightarrow [0,1]$.  We say that $T$ transports $\mu$ onto $\nu$ and write $\nu=T\sharp\mu$ if $\nu(B)=\mu \circ T^{-1}(B)$ for any measurable set $B \subset [0,1]$.  We also say $\nu$ is the push-forward or the image measure of $\mu$ by $T$.  To introduce the corresponding Wasserstein distance between $\mu$ and $\nu$, one defines
\begin{equation*}
  d^2_W(x,y):= \inf_{T:\nu=T\sharp\mu} \int_{0}^{1}d^2(x,T(x)) d \mu(x),
\end{equation*}
as soon as the source measure $\mu$ has no atoms.  In fact, by Brenier's theorem, if $\mu$ is absolutely continuous with respect to the Lebesgue measure, then there exists a measurable nondecreasing map $T$ such that $\nu=T\sharp\mu$. The proper definition of the associated Wasserstein distance needs a relaxed variational scheme given by
\begin{equation}\label{dw}
  d^2_W(x,y):= \inf_{\Pi\in\Gamma} \left\{\int_{[0,1]\times [0,1]}\left| \arcsin (2y-1)-\arcsin(2x-1) \right|^2 d \Pi(x,y) \right\},
\end{equation}
where $\Pi$ runs over the set of transference plans $\Gamma$ between $\mu$ and $\nu$. One important simplification in 1D is that the optimal plan can be characterized fully in terms of the inverse of cumulative distribution functions, see \cite{Vi03}.  To be specific, let $F$ and $G$ be the cumulative of the 1D functions $f$ and $g$, and define the pseudo-inverse
$$\Phi(\eta,t):=F^{-1}(\eta,t)=\inf\{x\in[0,1]:F(x,t)>\eta\}.$$
By Brenier's theorem and the definition of the image measure, we have
$$ F(x,t)=\int_{-\infty}^{x} f(y)dy =\int_{-\infty}^{\varphi'(x)} g(y) dy =G\circ \varphi'(x). $$
Then it is straightforward to obtain $\varphi'=G^{-1}\circ F$, and the Wasserstein distance in \eqref{dw} becomes
\begin{equation}\label{wd}
    d_W^2(\mu,\nu)=\int_{0}^{1} d^2(F^{-1}(\omega),G^{-1}(\omega)) d\omega.
\end{equation}
The proof of this fact without relying to Brenier's theorem can be found in \cite[Section 2.2]{Vi03}.

By the definition of pseudo-inverse function, we can derive the following evolution equation satisfied by $\Phi(\eta,t)$
\begin{equation}\label{phi}
\left\{\begin{array}{ll}
\displaystyle \Phi_t =-\frac{\kappa}{2}\Phi(1-\Phi) \frac{\partial}{\partial \eta}\left(\left(\frac{\partial \Phi}{\partial \eta}\right)^{-1} \right)-\frac{\kappa}{2}(1-2\Phi)-\Phi(1-\Phi)V'(\Phi),\; \eta\in(0,1),t>0,\\[5mm]
\Phi(0,t)=0,\; \Phi(1,t)=1,\; t>0.
\end{array}
\right.
\end{equation}
We would like to remark that, the Dirichlet boundary condition of $\Phi(1,t)=1$ applies here since one have from the strong maximum principle that $f(x,t)$ in strictly positive in $[0,1]$ for all time $t$ if $f_0(x)\geq,\not\equiv0$.  This, on the other hand, indicates that $\Phi(\eta,t)$ must be strictly increasing in $\eta$, which is an important property to preserve for numerical schemes.  Actually, according to the non-flux boundary condition of $f(x,t)$, we have
\begin{equation*}
    \frac{\kappa}{2}\big(x(1-x)f\big)_{x}+\big(x(1-x)V'(x)f\big)=0, \quad x=0,1, \forall t>0.
\end{equation*}
If $f_0(x)>0$, we can obtain from the definition of $\Phi(\eta,t)$ that for all $t>0$
\begin{equation*}
    \Phi_t=-\frac{\frac{\partial F}{\partial t}\Big|_{x=\Phi}}{\frac{\partial F}{\partial x}\Big|_{x=\Phi}}=-\frac{\frac{\kappa}{2} \partial_x \big(x(1-x)f \big)_{x=\Phi}+\left(x(1-x)V'(x)f \right)_{x=\Phi}}{f(x,t)_{x=\Phi}}=0, \quad \text{at}~\eta=0,1.
\end{equation*}
This implies the Dirichlet boundary condition holds $\Phi(0,t)=0$ and $\Phi(1,t)=1$ for all time $t$.  However, a free boundary condition should be adopted if $f(x,t)$ remains compactly supported when studying problems with diffusion degenerate inside the domain.  

Note that (\ref{phi}) has the following free energy
\begin{equation}\label{freeenergy}
  \mathcal{E}(\Phi):=-\frac{\kappa}{2}\int_{0}^{1} \ln\left(\frac{\partial \Phi}{\partial \eta} \right) d \eta + \frac{\kappa}{2}\int_{0}^{1} \ln \big(\Phi(1-\Phi) \big) d\eta +\int_{0}^{1}V(\Phi)d\eta.
\end{equation}
Then we will connect this evolution problem for the map $\Phi(\eta,t)$ pushing forward the initial data $f_0$ to the solution $f(x,t)$ at time $t$ with the continuum limit of implicit Euler steps obtained as Euler--Lagrange conditions for suitable variational problems. Finally, let us restate Theorem \ref{theorem11} in terms of the map $\Phi(\eta,t)$.

\begin{theorem}\label{theorem21}
The map $\Phi(\eta,t)$ pushing forward the initial data $f_0$ to the solution $f(x,t)$ of the problem \eqref{driftdiff}-\eqref{NBC} satisfies  that
$$
d_W^2(f(\cdot,t),f_\infty)=\int_{0}^{1} d^2\big(\Phi(\eta,t),\Phi_\infty (\eta)\big) d\eta\to 0 \quad
\mbox{exponentially fast as } t\to\infty,
$$
where
\[
\Phi_\infty (\eta) =\left\{
\begin{array}{ll}
0& \text{for} \; 0\leq\eta\leq\eta_0, \\[3mm]
1&\text{for} \;  \eta_0<\eta\leq1,\\
\end{array} \qquad \text{with~}\eta_0:=1-\int_{0}^{1}f_0(x) \theta(x)dx \mbox{~and~} \theta(x) \mbox{given by \eqref{thetaV}.}
\right.
\]
\end{theorem}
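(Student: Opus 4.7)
The plan is to show Theorem~\ref{theorem21} is essentially a direct translation of Theorem~\ref{theorem11} to the pseudo-inverse variable via the one-dimensional Wasserstein formula \eqref{wd}. First, I would identify $\Phi_\infty$ as the pseudo-inverse of the cumulative distribution function of $f_\infty$. Denote $m:=\int_0^1 f_0(x)\theta(x)\,dx$ so that $\eta_0=1-m$ and
\[
F_\infty(x):=\int_0^x f_\infty(y)\,dy=\begin{cases}\eta_0,&0\le x<1,\\ 1,&x=1.\end{cases}
\]
Applying the definition $\Phi_\infty(\eta):=\inf\{x\in[0,1]:F_\infty(x)>\eta\}$, one gets $\Phi_\infty(\eta)=0$ for $\eta<\eta_0$ and $\Phi_\infty(\eta)=1$ for $\eta\ge\eta_0$, which matches the claimed formula up to the single point $\eta=\eta_0$ of measure zero (irrelevant in integrals).

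Next, I would apply the $1D$ representation \eqref{wd} with $\mu=f(\cdot,t)$ and $\nu=f_\infty$. Since $f(\cdot,t)$ is smooth and strictly positive in $(0,1)$ for all $t>0$ by the strong maximum principle quoted after \eqref{phi}, it has no atoms, so its pseudo-inverse is precisely the map $\Phi(\cdot,t)$ introduced in Section~2.1, and the formula remains valid even when the target measure $f_\infty$ has atoms since the identity only relies on the cumulative distributions being monotone. This yields
\[
d_W^2\bigl(f(\cdot,t),f_\infty\bigr)=\int_0^1 d^2\bigl(\Phi(\eta,t),\Phi_\infty(\eta)\bigr)\,d\eta,
\]
with $d$ the Shahshahani distance of \eqref{d}.

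Finally, the statement about exponential decay follows immediately from the corresponding assertion in Theorem~\ref{theorem11}, provided one verifies that the ``transport metric'' invoked there is indeed $d_W$ based on the Shahshahani distance rather than some other metric in \cite{Chalub2009a}. The main (minor) obstacle I anticipate is precisely this reconciliation: one must check that the exponential rate established by Chalub--Souza for \eqref{driftdiff}--\eqref{NBC} is measured in a metric equivalent to (or identical with) the $d_W$ defined in \eqref{wd}. If the original rate is stated in a different but comparable transport-type metric (for example, a bounded-Lipschitz or Fortet--Mourier distance), I would bridge the two using either the explicit boundedness of the Shahshahani geodesic distance on $[0,1]$ (so that $d_W$ is dominated by $C\cdot d_{BL}$) or the equivalence of weak convergence of probability measures on the compact space $([0,1],d)$ with convergence in $d_W$. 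Once this equivalence is in place, the exponential decay in $d_W$ transfers verbatim through the identity above, completing the proof.
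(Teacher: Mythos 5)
Your proposal is correct and follows essentially the same route as the paper, which in fact offers no separate proof of Theorem~\ref{theorem21} at all: it is presented as a direct restatement of Theorem~\ref{theorem11} in the pseudo-inverse variable via the one-dimensional Wasserstein formula \eqref{wd}. Your write-up simply makes explicit the two ingredients the paper leaves implicit (the identification of $\Phi_\infty$ as the pseudo-inverse of the CDF of $f_\infty$, up to a measure-zero point, and the transfer of the exponential rate from the transport metric of \cite{Chalub2009a} to $d_W$), and your flagged reconciliation of metrics is a reasonable point of care that the paper itself does not address.
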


In the sequel, we will design a numerical scheme capable of accurately capturing the long-time behavior described in Theorem \ref{theorem21}.

\subsection{Discretization for Euler--Lagrange of Replicator-Diffusion \cref{driftdiff}}
We now consider the spatio-temporal discretization for the Euler--Lagrange problem \eqref{phi} of the full Replicator-Diffusion equation \eqref{driftdiff}.  Throughout this paper, we assume that the time step $\tau$ and space step $h$ are constant in the discretization.  In terms of the Wasserstein distance \eqref{wd}, the Jordan--Kinderlehrer--Otto (JKO) steepest descent scheme implies that finding the inverse distribution function in \eqref{phi} corresponds to solve the following for a fixed time step $\tau>0$
$$\Phi^{k+1} \in \mathop{\arg\inf}_{\omega:(\omega^{-1})'\in\mathcal A} \left[ \mathcal{E}(\omega) +\frac{1}{2 \tau} d^2_W(\omega,\Phi^k)  \right]$$
over the admissible set $f\in\mathcal A:=\{f\in L^1_+(0,1):f \ln (x(1-x)f) \in L^1(0,1)\}$.  In light of \eqref{dw}, we find that
 \begin{align*}
   \frac{\delta d^2_W(\omega,\Phi^k)}{\delta \omega} &= \frac{d}{d \omega} \left| \arcsin(2 \omega -1)- \arcsin(2 \Phi^k -1)\right|^2 \\
    &= \frac{2}{\sqrt{\omega(1-\omega)}} \left(\arcsin(2 \omega -1)- \arcsin(2 \Phi^k -1) \right)\\
    & \approx \frac{2}{\sqrt{\omega(1-\omega)}} \frac{(\omega-\Phi^k)}{\sqrt{\omega(1-\omega)}}=\frac{2(\omega-\Phi^k)}{\omega(1-\omega)}.
 \end{align*}
 Hence an approximated Euler--Lagrange equation \eqref{phi} associated to this minimization problem is
 \begin{equation}\label{jko}
    \frac{1}{\Phi^{k+1}(1-\Phi^{k+1})} \frac{\Phi^{k+1}-\Phi^k}{\tau} =
   - \frac{\kappa}{2}\frac{\partial}{\partial \eta}\left[ \left(\frac{\partial \Phi^{k+1}(\eta)}{\partial \eta}\right)^{-1} \right]-\frac{\kappa}{2}\frac{1-2\Phi^{k+1}}{\Phi^{k+1}(1-\Phi^{k+1})}-V'(\Phi^{k+1}).
 \end{equation}

If we denote $\Phi_i^k=\Phi(ih,k\tau)$ for $i=0,1,\cdots,N$ and $Nh=1$, $k\in \mathbb{N}$, our full finite difference discretization of \eqref{jko} is the following implicit scheme:
\begin{equation}\label{Euler}
  \frac{1}{\Phi^{k+1}_i(1-\Phi^{k+1}_i)}\frac{\Phi^{k+1}_i-\Phi^k_i}{\tau} =
  -\frac{\kappa}{2}\left(\frac{1}{\Phi^{k+1}_{i+1}-\Phi^{k+1}_i}- \frac{1}{\Phi^{k+1}_{i}-\Phi^{k+1}_{i-1}} \right)-\frac{\kappa}{2}\frac{1-2\Phi^{k+1}_i}{ \Phi^{k+1}_i(1-\Phi^{k+1}_i)}-V'(\Phi_i^{k+1})
\end{equation}
with the Dirichlet boundary condition $\Phi^{k+1}_0=0$ and $\Phi^{k+1}_N=1$.  The solution at each time step is computed by an iterative Newton's procedure with initial diffeomorphism obtained by a preprocessing step, and one does not need a CFL condition for this implicit in time discretization.

\subsubsection{Preprocessing Step via a Convex Splitting Technique}

System \eqref{phi} can be viewed as a gradient flow associated with the energy functional \eqref{freeenergy}.  This energy has a non-convex structure, and it prohibits the direct application of the proposed implicit scheme due to the singularity of the numerical scheme on the boundary and local convergence of Newton's method.  However, the merits brought by the convex-splitting technique enable us to construct a numerical scheme in the preprocessing step and then calculate the initial diffeomorphism for Newton's method in the Euler implicit scheme.

To this end, we write $V=V_c-V_e$ for $V_c$ and $V_e$ being smooth convex functions and then apply the convex splitting method of Eyre \cite{Eyre} to obtain the following semi-discrete scheme
\begin{equation}\label{semi}
  \frac{1}{\Phi^{k}(1-\Phi^{k})} \frac{\Phi^{k+1}-\Phi^k}{\tau} = - \frac{\kappa}{2} \frac{\partial}{\partial \eta}\left[ \left(\frac{\partial \Phi^{k+1}(\eta)}{\partial \eta}\right)^{-1} \right]-\frac{\kappa}{2}\frac{1-2\Phi^{k}}{\Phi^{k}(1-\Phi^{k})}-V'_c(\Phi^{k+1})+V'_e(\Phi^k).
\end{equation}
Here, the map from $\Phi^k$ to $\Phi^{k+1}$ is an optimal transport in the sense that $\Phi^{k+1}$ minimizes the functional
\begin{equation}\label{J}
   J(\Phi):=\frac{1}{2 \tau} \int_{0}^{1}\frac{|\Phi-\Phi^k|^2}{\Phi^k(1-\Phi^k)} d\eta+ W(\Phi),
\end{equation}
where $W$ is a convex functional explicitly given by
$$ W(\Phi)= -\frac{\kappa}{2}\int_{0}^{1} \ln\big(\frac{\partial \Phi}{\partial \eta} \big) d \eta + \frac{\kappa}{2} \int_{0}^{1}\frac{1-2\Phi^k}{\Phi^k(1-\Phi^k)} \Phi d\eta + \int_{0}^{1}V_c (\Phi)d\eta - \int_{0}^{1}V'_e (\Phi^k)\Phi d\eta.$$

Let us introduce the discrete space domain
\begin{equation}\label{Q}
Q:= \{{l_i}: l_{i-1}<l_i, 1\leq i\leq N; \;  l_0=0,l_N=1\}
\end{equation}
and its closure $\bar{Q}:= Q \bigcup \partial Q$ with boundary
\[\partial Q:= \{{l_i} ~| l_{i-1}\leq l_i, 1\leq i\leq N \; \text{and} \; l_{i-1}=l_{i} \; \text{for some} \; 1\leq i\leq N; \; l_0=0, l_N=1\}.\]
The full finite difference discretization of \eqref{semi} is formulated as follows:
\begin{equation}\label{full}
\frac{1}{\Phi^{k}_i(1-\Phi^{k}_i)}\frac{\Phi^{k+1}_i-\Phi^k_i}{\tau} =-\frac{\kappa}{2} \left( \frac{1}{\Phi^{k+1}_{i+1}-\Phi^{k+1}_i}- \frac{1}{\Phi^{k+1}_{i}-\Phi^{k+1}_{i-1}} \right)-\frac{\kappa}{2}\frac{1-2\Phi^{k}_i}{\Phi^{k}_i(1-\Phi^{k}_i)}-V'_c(\Phi^{k+1}_i)+V'_e(\Phi^k_i)
\end{equation}
with the boundary condition $\Phi^{k}_0=0$ and $\Phi^{k}_N=1$ for each $k$.  At each time step, we solve a system of nonlinear equations by a damped Newton's iteration.  For notational simplicity, let us denote the nonlinear functional $F$ in \eqref{full} as
\begin{equation}\label{F}
  F(\Phi^{k+1}_i):=  \frac{1}{\Phi^{k}_i(1-\Phi^{k}_i)}\frac{\Phi^{k+1}_i-\Phi^k_i}{\tau} + \frac{\kappa}{2} \left(\frac{1}{\Phi^{k+1}_{i+1}-\Phi^{k+1}_i}- \frac{1}{\Phi^{k+1}_{i}-\Phi^{k+1}_{i-1}} \right)+\frac{\kappa}{2}\frac{1-2\Phi^{k}_i}{\Phi^{k}_i(1-\Phi^{k}_i)}+V'_c(\Phi^{k+1}_i)-V'_e(\Phi^k_i),
\end{equation}
then we will calculate the Jacobian matrix $DF$ of \eqref{F} and determine the Newton update $\gamma^{k+1,n+1}$ by
\begin{equation}\label{NT}
  DF(\Phi^{k+1,n+1}) \gamma^{k+1,n+1}= -F(\Phi^{k+1,n}),
\end{equation}
where index $k$ corresponds to the temporal discretization and $n$ to the Newton iteration.
Given $\Phi^{k+1,0}=\Phi^k$, $k\in \mathbb{N}$, we calculate
$ \Phi^{k+1,n+1}=\Phi^{k+1,n}+ \alpha (\lambda) \gamma^{k+1,n+1} $
with
\begin{equation*}\label{alpha}
\alpha(\lambda)=\left\{
\begin{array}{ll}
\frac{1}{\lambda}& \text{for} \; \lambda>\lambda', \\
\frac{1-\lambda}{\lambda(3-\lambda)}&\text{for} \; \lambda' \geq \lambda \geq \lambda^*,\\
1&\text{for}  \; \lambda<\lambda^*,\\
\end{array}
\right.
\end{equation*}
where $\lambda^*=2-\sqrt{3}$, $\lambda'\in [\lambda^*,1)$ and
$\lambda(\Phi^{k+1,n})=\sqrt{- h F(\Phi^{k+1,n}) \gamma^{k+1,n}/a_0}$ with $a_0$ defined in \eqref{a0}.

\section{Numerical Analysis}

This section analyzes the implicit Euler scheme \eqref{Euler} and the convex splitting scheme \eqref{full}.  We first collect several important properties of the splitting technique applied in \cite{DLWY2019} that includes the unique solvability, convergence of the Damped Newton method \eqref{NT} and dissipation of energy. Then we prove that there exists a lower bound for the discrete energy, which implies the existence of numerical solutions. Finally, in light of the fact that any steady state must be a Heaviside-type step function, we show the convergence of of the implicit scheme \eqref{Euler}.   

\subsection{Several Properties of the Convex Splitting Scheme}
\begin{lemma*}
The numerical scheme \eqref{full} is uniquely solvable in space $Q$ given by \eqref{Q}.
\end{lemma*}
\begin{proof}
  We first introduce the discretization of \eqref{J} as
  \begin{equation}\label{Jn}
    J_N(y):=\frac{h}{2\tau} \sum_{i=1}^{N-1} \frac{(y_i-\Phi^k_i)^2}{\Phi^k_i(1-\Phi^k_i)}-\frac{\kappa h}{2}\sum_{i=0}^{N-1}\ln \left( \frac{y_{i+1}-y_i}{h} \right)+ \frac{\kappa h}{2}\sum_{i=1}^{N-1} \frac{1-2\Phi^k_i}{\Phi^k_i(1-\Phi^k_i)} y_i  +h\sum_{i=1}^{N-1}V_c(y_i)-h\sum_{i=1}^{N-1}V'_e(\Phi_i^k)y_i
  \end{equation}
with the given $\{\Phi^k_i\}\subset Q$. Since $J_N(y)$ is a convex function on the closed convex set $\bar{Q}$ and $J_N(y)=+\infty$ on boundary $\partial Q$, it is straightforward to obtain that there exists a unique minimizer $x \in Q$.

To show the unique solvability of the scheme, it suffices to prove that $x\in Q$ is the minimizer of $J_N(y)$ if and only if it is a solution of \eqref{full}.  Suppose that $x\in Q$ minimizes $J_N(y)$.  Define $j_1(\epsilon):=J_N(x+\epsilon(y-x))$.  Then there exists $\epsilon_0>0$ small enough such that $x+\epsilon(y-x) \in Q$ for any $(\epsilon,y)\in(-\epsilon_0,\epsilon_0)\times \bar{Q}$.  Now that $j_1(\epsilon)$ achieves its minimizer at zero, one has $j_1'(0)=0$ hence $h\sum_{i=1}^{N-1}(y_i-x_i) F(x_i) =0$ for any $y\in \bar{Q}$.  Therefore, $x$ is a solution of \eqref{full}.

To prove the ``only if" part, we assume that $x\in Q$ solves the scheme \eqref{full} for any $y\in Q$.  Then
  \begin{align*}
    J_N(y) =&J_N(x+(y-x)) \\
    =&J_N(x) +\frac{h}{2\tau} \sum_{i=1}^{N-1} \frac{(y_i-x_i)^2}{\Phi^k_i(1-\Phi^k_i)} +\frac{\kappa h}{2}\sum_{i=0}^{N-1} \left(\frac{y_{i+1}-y_i}{x_{i+1}-x_i} -\ln \big( \frac{y_{i+1}-y_i}{x_{i+1}-x_i} \big) -1 \right)  \\ 
     &+ h\sum_{i=1}^{N-1} \left[ V_c(y_i)-V_c(x_i)-(y_i-x_i)V'_c(x_i) \right]\\
    \geq &  J_N(x),
  \end{align*}
where the last inequality holds since $m-\ln m -1 >0$ for any positive $m$, and $V_c$ is convex function with positive second derivative.  This completes the proof.
\end{proof}

We now introduce the following concept to prove the convergence of the Newton's method of the scheme \eqref{full}.
\begin{defin*}(\cite{Nesterov})
Let $\mathcal{G}$ be  a finite-dimensional real vector space and $\mathcal{Q}$ be an open nonempty convex subset of $\mathcal{G}$.  Then a convex function $\Lambda\in C^3: \mathcal{Q}\rightarrow \mathbb{R}$ is called self-concordant on $\mathcal{Q}$ if there exists a constant $a_0>0$ such that the following inequality holds for all $x\in \mathcal{Q}$ and all $u \in \mathcal{G}$:
   $$ |D^3\Lambda (x)[u,u,u]| \leq 2 a_0^{-1/2} (D^2\Lambda (x)[u,u])^{3/2},$$
where ($D^k\Lambda(x)[u_1,\cdots, u_k]$ is its $k$-th differential taken at $x$ alone the collection of direction $(u_1, \cdots, u_k$).
\end{defin*}
Then we have the following theorem.
\begin{lemma*}
  $J_N(y)$ defined in \eqref{Jn} is a self-concordant function and Newton's iteration \eqref{NT} is convergent in $Q$.
\end{lemma*}
\begin{proof}
  Define $J_N(y):=J_N^1(y)+J_N^2(y)$ with
  \begin{align*}
    J_N^1(y)&:= \frac{h}{2\tau} \sum_{i=1}^{N-1} \frac{(y_i-\Phi^k_i)^2}{\Phi^k_i(1-\Phi^k_i)} +\frac{\kappa h}{2}\sum_{i=1}^{N-1} \frac{1-2\Phi^k_i}{\Phi^k_i(1-\Phi^k_i)} y_i  -h\sum_{i=1}^{N-1}V'_e(\Phi_i^k)y_i, \\
    J_N^2(y)&:= -\frac{\kappa h}{2}\sum_{i=0}^{N-1}\ln \left(\frac{y_{i+1}-y_i}{h} \right) +h\sum_{i=1}^{N-1}V_c(y_i) .
  \end{align*}
Since both the linear and quadratic functions have zero third-order derivatives, one can easily find that $J_N^1(y)$ is self-concordant for any $a_0$ in $Q$.

We proceed to prove that $j_2(\xi):=J_N^2(y+\xi u)$ is a self-concordant function of $\xi$ along every line $u$ in $Q$.  To this end, we have from direct calculations that
\begin{align*}
  j_2''(\xi) =& \frac{h\kappa}{2}\sum_{i=0}^{N-1} \frac{(u_{i+1}-u_{i})^2}{(y_{i+1}+\xi u_{i+1}-y_i-\xi u_i)^2}+h\sum_{i=1}^{N-1}V''_c(y_i+\xi u_i)u_i^2, \\
  j_2'''(\xi) =&-h\kappa  \sum_{i=0}^{N-1} \frac{(u_{i+1}-u_{i})^3}{(y_{i+1}+\xi u_{i+1}-y_i-\xi u_i)^3}+h\sum_{i=1}^{N-1}V'''_c(y_i+\xi u_i)u_i^3.
\end{align*}
Since $V_c(x)$ is convex and smooth, for any $y_i\in Q$ there exists a constant $M_v>0$ such that
\begin{equation}\label{mv}
    |V'''_c(y_i)| \leq M_v (V''_c(y_i))^{\frac{3}{2}}.
\end{equation}
Let us define 
\begin{equation*}
  m_i:=\left\{
  \begin{array}{cc}
    \left| \frac{u_{i+1}-u_i}{y_{i+1}+\xi u_{i+1}-y_i-\xi u_i} \right|, & i=0,1,\cdots,N-1, \\
    (V''_c(y_{i-N+1}+\xi u_{i-N+1}))^{\frac{1}{2}}|u_{i-N+1}|, & i=N,\cdots,2N-2,
  \end{array}
  \right.
\end{equation*}
then we proceed to find that
\begin{align*}
  |j_2'''(\xi)| \leq& \,h\kappa  \sum_{i=0}^{N-1} \frac{|u_{i+1}-u_{i}|^3}{|y_{i+1}+\xi u_{i+1}-y_i-\xi u_i|^3}+h\sum_{i=1}^{N-1}|V'''_c(y_i+\xi u_i)| |u_i|^3 \\
   \leq& h\kappa  \sum_{i=0}^{N-1} \frac{|u_{i+1}-u_{i}|^3}{|y_{i+1}+\xi u_{i+1}-y_i-\xi u_i|^3}+h M_v \sum_{i=1}^{N-1}(V''_c(y_i+\xi u_i))^{\frac{3}{2}} |u_i|^3 \\
   \leq&\, h \max(\kappa,M_v) \sum_{i=0}^{2N-2} m_i^3 
   \leq h \max(\kappa,M_v) \left(\sum_{i=0}^{2N-2} m_i^2 \right)^{\frac{3}{2}}
   \leq \frac{\max(\kappa,M_v)}{\sqrt{h} (\min(1,\frac{\kappa}{2}))^{\frac{3}{2}}} \left( j_2''(\xi) \right)^{\frac{3}{2}},
\end{align*}
where the fourth inequality follows from the following
\begin{equation*}
  \left| \sum_{i=0}^{2N-2} m_i^3 \right|\leq \left(\sum_{i=0}^{2N-2} m_i^2\right)^{\frac{1}{2}} \left(\sum_{i=0}^{2N-2} m_i^4\right)^{\frac{1}{2}} \leq \left(\sum_{i=0}^{2N-2} m_i^2 \right)^{\frac{3}{2}},
\end{equation*}

Now, let us choose
\begin{equation}\label{a0}
a_0:=\frac{4h (\min(1,\frac{\kappa}{2}))^3}{(\max(\kappa,M_v))^2}
\end{equation}
with $M_v$ satisfies \eqref{mv},
then $J^2_N(y)$ is self-concordant for $y\in Q$ with parameter $a_0$.  This implies that $J_N(y)$ is self-concordant and the Newton's iteration is convergent thanks to Theorem 2.2.3 in \cite{Nesterov}.
\end{proof}

 Let $\{\Phi_i\}_{i=0,1,...,N}$ be any strictly increasing sequence with $\Phi_0=0$ and $\Phi_N=1$.  Then the discrete free energy functional is defined as
\begin{equation}\label{discreteenergy}
  \mathcal{E}_N(\Phi)= -\frac{\kappa}{2}\sum_{i=0}^{N-1} \ln \left(\frac{\Phi_{i+1}-\Phi_{i}}{h} \right)h +\frac{\kappa}{2}\sum_{i=1}^{N-1} \ln \left(\Phi_{i}(1-\Phi_{i}) \right) h+\sum_{i=1}^{N-1}V(\Phi_i)h.
\end{equation}
Then the following theorem states that the energy dissipation is preserved through the discretization.
\begin{lemma*}
The discrete energy dissipation for the evolution of the discrete energy \eqref{discreteenergy}
\begin{equation}\label{energydis}
 \mathcal{E}_N(\Phi^{k+1})-\mathcal{E}_N(\Phi^{k}) + \sum_{i=1}^{N-1} \frac{\left(\Phi^{k+1}_i-\Phi^k_i \right)^2}{\Phi^{k}_i(1-\Phi^{k}_i)}
    \frac{h}{\tau}
  \leq  0,
\end{equation}
holds for the scheme \eqref{full}.
\end{lemma*}
\begin{proof}
We rewrite
\[\mathcal{E}_N(\Phi^{k+1}):= \mathcal{E}_N^1(\Phi^{k+1})-\mathcal{E}_N^2(\Phi^{k+1}),\]
with
 $$\mathcal{E}_N^1(\Phi^{k+1}):=-\frac{\kappa}{2}\sum_{i=0}^{N-1} \ln \left( \frac{\Phi^{k+1}_{i+1}-\Phi^{k+1}_{i}}{h} \right)h +\sum_{i=1}^{N-1}V_c(\Phi^{k+1}_i)h$$
  and
  $$\mathcal{E}_N^2(\Phi^{k+1}):=-\frac{\kappa}{2}\sum_{i=1}^{N-1} \ln \left( \Phi^{k+1}_{i}(1-\Phi^{k+1}_{i}) \right) h+\sum_{i=1}^{N-1}V_e(\Phi^{k+1}_i)h.$$
  Define
    \begin{align*}
    &\frac{\delta \mathcal{E}_N^1}{\delta \Phi^{k+1}}(\Phi^{k+1}_i) =\frac{\kappa}{2}\left( \frac{1}{\Phi^{k+1}_{i+1}-\Phi^{k+1}_i}- \frac{1}{\Phi^{k+1}_{i}-\Phi^{k+1}_{i-1}} \right)+V'_c(\Phi_i^{k+1}), \\
    &\frac{\delta \mathcal{E}_N^2}{\delta \Phi^{k+1}}(\Phi^{k}_i) =- \frac{\kappa}{2}\frac{1-2\Phi^{k}_i}{\Phi^{k}_i(1-\Phi^{k}_i)}  +V'_e(\Phi_i^k).
  \end{align*}
According to the inequality $\ln (\frac{x}{y})\leq (x-y)\frac{1}{y}$ and convexity of $V_c$, we can obtain
$$\ln \left(\frac{\Phi_{i+1}^{k+1}-\Phi_{i}^{k+1}}{\Phi_{i+1}^{k}-\Phi_{i}^{k}} \right)\geq -(\Phi_{i+1}^{k}-\Phi_{i}^{k}-\Phi_{i+1}^{k+1}+\Phi_{i}^{k+1})\frac{1}{\Phi_{i+1}^{k+1}-\Phi_{i}^{k+1}},$$
and
$$\sum_{i=1}^{N-1}V_c(\Phi^{k}_i)h -\sum_{i=1}^{N-1}V_c(\Phi^{k+1}_i)h\geq \sum_{i=1}^{N-1}(\Phi_i^k-\Phi_i^{k+1})V'_c(\Phi^{k+1}_i)h.$$
Then it follows from the summation-by-parts that
\begin{align}\label{en1}
  \mathcal{E}_N^1(\Phi^{k}) - \mathcal{E}_N^1(\Phi^{k+1}) 
  =&\, \frac{h\kappa}{2} \sum_{i=0}^{N-1} \ln \left( \frac{\Phi^{k+1}_{i+1}-\Phi^{k+1}_{i}}{\Phi^{k}_{i+1}-\Phi^{k}_{i}} \right)+\sum_{i=1}^{N-1}V_c(\Phi^{k}_i)h -\sum_{i=1}^{N-1}V_c(\Phi^{k+1}_i)h \notag\\
   \geq&\,  -\frac{h\kappa}{2} \sum_{i=0}^{N-1} (\Phi_{i+1}^{k}-\Phi_{i}^{k}-\Phi_{i+1}^{k+1}+\Phi_{i}^{k+1})\frac{1}{\Phi_{i+1}^{k+1}-\Phi_{i}^{k+1}}+\sum_{i=1}^{N-1}(\Phi_i^k-\Phi_i^{k+1})V'_c(\Phi^{k+1}_i)h \notag\\
   =&\, \frac{h\kappa}{2}\sum_{i=1}^{N-1} (\Phi_{i}^{k}-\Phi_{i}^{k+1})\left( \frac{1}{\Phi^{k+1}_{i+1}-\Phi^{k+1}_i}- \frac{1}{\Phi^{k+1}_{i}-\Phi^{k+1}_{i-1}} \right)+\sum_{i=1}^{N-1}(\Phi_i^k-\Phi_i^{k+1})V'_c(\Phi^{k+1}_i)h \notag\\
   =&\,h\sum_{i=1}^{N-1}(\Phi^{k}_i-\Phi^{k+1}_i)\frac{\delta \mathcal{E}_N^1}{\delta \Phi^{k+1}}(\Phi^{k+1}_i). 
\end{align}
  
Similarly, in light of the convexity of $\mathcal{E}_N^2$ we find that
  \begin{equation*}
     \mathcal{E}_N^2(\Phi^{k+1})-\mathcal{E}_N^2(\Phi^{k}) \geq h\sum_{i=1}^{N-1}(\Phi^{k+1}_i-\Phi^{k}_i)\frac{\delta \mathcal{E}_N^2}{\delta \Phi^{k+1}}(\Phi^{k}_i).
  \end{equation*}
  
On the other hand, let us rewrite scheme \eqref{full} into the following
\begin{align}\label{p1}
   \frac{1}{\Phi^{k}_i(1-\Phi^{k}_i)}\frac{\Phi^{k+1}_i-\Phi^k_i}{\tau}=& -\frac{\delta \mathcal{E}_N^1}{\delta \Phi^{k+1}}(\Phi^{k+1}_i)+\frac{\delta \mathcal{E}_N^2}{\delta \Phi^{k+1}}(\Phi^{k}_i),
\end{align}
then one infers from \eqref{en1}-\eqref{p1} that
  \begin{align*}
 \mathcal{E}_N(\Phi^{k+1})-\mathcal{E}_N(\Phi^{k})
 =&\,\mathcal{E}_N^1(\Phi^{k+1})-\mathcal{E}_N^2(\Phi^{k+1})-\mathcal{E}_N^1(\Phi^{k})+\mathcal{E}_N^2(\Phi^{k})\\
 \leq&\, \sum_{i=1}^{N-1} \left(\frac{\delta \mathcal{E}_N^1}{\delta \Phi^{k+1}}(\Phi^{k+1}_i)-\frac{\delta \mathcal{E}_N^2}{\delta \Phi^{k+1}}(\Phi^{k}_i)\right) \left(\Phi^{k+1}_i-\Phi^k_i \right) h\\ 
  =&\,-\sum_{i=1}^{N-1} \frac{\left(\Phi^{k+1}_i-\Phi^k_i \right)^2}{\Phi^{k}_i(1-\Phi^{k}_i)}
    \frac{h}{\tau}
  \leq  0,
\end{align*}
which is claimed.
\end{proof}

\subsection{Convergence of the Euler Implicit Scheme}
We proceed to study convergence of the discrete solution of \eqref{Euler} to a unique steady state in the long time.  For this purpose we first prove the existence of this steady state. and our argument begins with the following observation.
\begin{lemma}\label{lemma35}
  Let $\{\Phi_i\}_{i=0,1,...,N}$ be any strictly increasing sequence with $\Phi_0=0$ and $\Phi_N=1$.  Then the discrete free energy functional $\mathcal{E}_N(\Phi)$ \eqref{discreteenergy} 
  is bounded from below for each $h>0$. Moreover, the bound is of the order $O(\ln h)$ as $h$ goes to zero.
\end{lemma}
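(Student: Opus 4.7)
The plan is a term-by-term analysis of the decomposition $\mathcal{E}_N(\Phi)=\frac{\kappa}{2}A+\frac{\kappa}{2}B+C$, where $A:=-h\sum_{i=0}^{N-1}\ln(a_i/h)$ with $a_i:=\Phi_{i+1}-\Phi_i$, $B:=h\sum_{i=1}^{N-1}\ln(\Phi_i(1-\Phi_i))$, and $C:=h\sum_{i=1}^{N-1}V(\Phi_i)$. The potential term is immediate: since $V\in C([0,1])$ and $(N-1)h\le 1$, we have $|C|\le\|V\|_{L^\infty([0,1])}$. For the entropy $A$, the constraint $\sum_{i=0}^{N-1}(a_i/h)=1/h=N$ and Jensen's inequality applied to the convex function $-\ln$ give $A\ge -hN\ln\bigl(\frac{1}{N}\sum(a_i/h)\bigr)=0$. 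Hence all the difficulty concentrates on controlling $B$, which becomes very negative when some $\Phi_i$ cluster near $\{0,1\}$.

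Assuming $h<1/2$, I would partition the indices $\{1,\ldots,N-1\}$ into three consecutive blocks
\[I_L=\{i:\Phi_i\le h\}=\{1,\ldots,m_L\},\quad I_R=\{i:\Phi_i\ge 1-h\}=\{m_R,\ldots,N-1\},\quad I_M=\{m_L+1,\ldots,m_R-1\},\]
with the natural convention when any block is empty, together with the matching gap partition $J_L=\{0,\ldots,m_L-1\}$, $J_M=\{m_L,\ldots,m_R-1\}$, $J_R=\{m_R,\ldots,N-1\}$. Observe that $\sum_{j\in J_L}a_j=\Phi_{m_L}\le h$ forces $a_j\le h$ for every $j\in J_L$, and symmetrically on $J_R$. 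On the middle block, $\Phi_i\in[h,1-h]$ gives $\Phi_i(1-\Phi_i)\ge h^2$, hence $h\sum_{i\in I_M}\ln(\Phi_i(1-\Phi_i))\ge 2|I_M|h\ln h\ge 2\ln h$ since $|I_M|h\le 1$ and $\ln h\le 0$. On $I_L$, combining the elementary bounds $\Phi_i(1-\Phi_i)\ge\Phi_i/2\ge a_{i-1}/2$ with the identity $h\ln a_j=h\ln(a_j/h)+h\ln h$ and the restricted entropy $A_L:=-h\sum_{j\in J_L}\ln(a_j/h)$ yields
\[h\sum_{i\in I_L}\ln\bigl(\Phi_i(1-\Phi_i)\bigr)\ge -A_L+m_L h\ln h-m_L h\ln 2\ge -A_L+\ln h-\ln 2,\]
using $m_L h\in(0,1]$. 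The analogous estimate $h\sum_{i\in I_R}\ln(\Phi_i(1-\Phi_i))\ge -A_R+\ln h-\ln 2$ follows from $1-\Phi_i\ge a_i$.

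Summing the three pieces of $B$ and writing $A=A_L+A_M+A_R$ with $A_M:=-h\sum_{j\in J_M}\ln(a_j/h)$, the boundary entropies cancel and one obtains $A+B\ge A_M+4\ln h-2\ln 2$. A last application of Jensen's inequality on $J_M$, with $\alpha:=\sum_{j\in J_M}a_j\in(0,1]$ and $\beta:=|J_M|h\in(0,1]$, gives
\[A_M\ge -\beta\ln(\alpha/\beta)=-\beta\ln\alpha+\beta\ln\beta\ge 0-\frac{1}{e},\]
since $\alpha\le 1$ and the elementary inequality $x\ln x\ge -1/e$ holds on $(0,1]$. Putting everything together,
\[\mathcal{E}_N(\Phi)\ge\frac{\kappa}{2}\left(4\ln h-2\ln 2-\frac{1}{e}\right)-\|V\|_{L^\infty([0,1])}=2\kappa\ln h-C\]
for an explicit constant $C=C(\kappa,\|V\|_\infty)>0$, which is precisely the announced $O(\ln h)$ bound (the range $h\ge 1/2$ being trivial since then $N\le 2$). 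The main obstacle is guaranteeing the clean boundary cancellation: replacing the sharp pointwise bounds $\Phi_i\ge a_{i-1}$ and $1-\Phi_i\ge a_i$ by weaker ones (for instance an AM-GM average over all $a_j$ with $j<i$) would destroy the pairing with $A_L$ and $A_R$ and produce uncontrolled extra factors of $\ln h$ or even $(\ln h)^2$.
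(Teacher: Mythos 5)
Your proof is correct. The cancellation you rely on---$\Phi_i(1-\Phi_i)\ge \tfrac12(\Phi_i-\Phi_{i-1})$ when $\Phi_i\le\tfrac12$, and its mirror image $\Phi_i(1-\Phi_i)\ge\tfrac12(\Phi_{i+1}-\Phi_i)$ when $\Phi_i\ge\tfrac12$---is exactly the inequality at the heart of the paper's argument, so the essential mechanism is shared; what differs is the decomposition of the index set and where Jensen is invoked. The paper splits once, at the ``median'' index where $\Phi$ crosses $\tfrac12$, applies the cancellation to \emph{every} gap on each side so that the entire sum $B$ is absorbed into $A$ up to the bounded ratios $(\Phi_{i+1}-\Phi_i)/(\Phi_\cdot(1-\Phi_\cdot))\le 2$, and then uses a single application of Jensen to these ratios; this yields $\mathcal{E}_N(\Phi)\ge\frac{\kappa}{2}(1-h)\ln h-M+Mh-\frac{\kappa}{2}(1-2h)\ln 2$, i.e.\ leading constant $\frac{\kappa}{2}$ in front of $\ln h$. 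You instead confine the cancellation to the two boundary layers $\{\Phi_i\le h\}$ and $\{\Phi_i\ge 1-h\}$, dispose of the bulk with the crude bound $\Phi_i(1-\Phi_i)\ge h^2$, and spend two further applications of Jensen on the residual entropies $A$ and $A_M$. The price is a factor of $4$ in the leading constant ($2\kappa\ln h$ versus $\frac{\kappa}{2}\ln h$), which is immaterial for the stated $O(\ln h)$ conclusion; the benefit is that your middle-block estimate needs no information at all about how the $\Phi_i$ are distributed away from the endpoints. Your closing remark about why the sharp pointwise bounds are needed for the pairing is well taken, and the edge cases (empty blocks, $h\ge 1/2$) are handled consistently. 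Both arguments are elementary and of comparable length; the paper's single median split is slightly cleaner and gives the sharper constant.
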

\begin{proof}
The trick of the proof is to find the ``median" of the sequence.  Let $k^0$ be the integer such that
\[\Phi_i \in (0,\frac{1}{2}) ~\text{for}~i\leq k^0,\quad\text{and}\quad\Phi_i \in [\frac{1}{2}, 1) ~\text{for}~i> k^0.\]
Therefore, one finds that
\[  \frac{\Phi_{i+1}-\Phi_{i}}{\Phi_{i+1}(1-\Phi_{i+1})} = \frac{1}{1-\Phi_{i+1}}-\frac{\Phi_{i}}{\Phi_{i+1}(1-\Phi_{i+1})}
   \overbrace{\leq 2- \frac{\Phi_i}{\Phi_{i+1}}}^{\text{since} \; 0< \Phi_{i+1}<\frac{1}{2}}
   \leq 2,\quad \text{for~}i=0,1,\cdots, k^0-1,\]
and 
\[  \frac{\Phi_{i+1}-\Phi_{i}}{\Phi_{i}(1-\Phi_{i})} \leq \overbrace{\frac{1}{\Phi_{i}} \leq 2}^{\text{since~}\Phi_{i}\geq \frac{1}{2}},\quad \text{for~}i=k^0+1, \cdots, N-1.\]
These inequalities, together with the uniform boundedness $|V(x)|\leq M$ in $[0,1]$, enable us to estimate
\begin{align*}
  \mathcal{E}_N(\Phi)
  \geq&\,  \frac{\kappa}{2}(1-h) \ln h -\frac{\kappa}{2}\sum_{i=0}^{N-1} \ln \left( \Phi_{i+1}-\Phi_{i} \right)h +\frac{\kappa}{2}\sum_{i=1}^{N-1} \ln \left(\Phi_{i}(1-\Phi_{i}) \right) h -M(N-1)h \\
   =&\, \frac{\kappa}{2}(1-h) \ln h -h \ln (\Phi_{k^0+1}-\Phi_{k^0}) -\frac{h\kappa}{2}\sum_{i=0}^{k^0-1} \ln \left(\frac{\Phi_{i+1}-\Phi_{i}}{\Phi_{i+1}(1-\Phi_{i+1})} \right) \\ &\,-\frac{h\kappa}{2}\sum_{i=k^0+1}^{N-1} \ln \left( \frac{\Phi_{i+1}-\Phi_{i}}{\Phi_{i}(1-\Phi_{i})} \right)-M+Mh
\end{align*}
\begin{align*}
   \geq&\, \frac{\kappa}{2}(1-h) \ln h -\frac{\kappa}{2}h(N-1)\sum_{i=0}^{k^0-1} \frac{1}{N-1}\ln \left(\frac{\Phi_{i+1}-\Phi_{i}}{\Phi_{i+1}(1-\Phi_{i+1})} \right) \\ &\,-\frac{\kappa}{2}h(N-1)\sum_{i=k^0+1}^{N-1} \frac{1}{N-1} \ln \left( \frac{\Phi_{i+1}-\Phi_{i}}{\Phi_{i}(1-\Phi_{i})} \right)-M+Mh  \\
   \geq&\,  \frac{\kappa}{2}(1-h)\ln h  -M+Mh \\
   &-\frac{\kappa}{2}h (N-1) \ln \left[ \frac{1}{N-1} \left( \sum_{i=0}^{k^0-1} \frac{\Phi_{i+1}-\Phi_{i}}{\Phi_{i+1}(1-\Phi_{i+1})} +\sum_{i=k^0+1}^{N-1} \frac{\Phi_{i+1}-\Phi_{i}}{\Phi_{i}(1-\Phi_{i})} \right) \right]\\
   \geq&\,  \frac{\kappa}{2}(1-h)\ln h -M+Mh-\frac{\kappa}{2}(1-2h)\ln 2,
\end{align*}
where we apply the fact $0<\Phi_{k^0+1}-\Phi_{k^0}<1$ for the second inequality and the Jensen inequality for the third one.
\end{proof}

As a consequence we establish the existence of a stationary state realized as a minimizer of the free energy functional.  Notice that lower bound of the discrete energy functional \eqref{energydis} diverges to $-\infty$ as $h\to 0$, and this is consistent with the fact that the continuous energy functional \eqref{freeenergy} is not bounded from below.

Now, we are ready to show that the steady state of \eqref{phi} must be a Heaviside step function.
\begin{lemma}\label{lemma_step1}
Denote $\{\Phi_i\}_{i=0,1,...,N}$ as the discrete solution of \eqref{Euler}. Define $\{\mathcal{F}_i^k\}_{i=0,1,\cdots,N}^{k\in\mathbb{N}}$ as
\begin{equation}\label{mathf}
  \mathcal{F}_i^k:=\tau \sum_{j=1}^{k}\Phi_i^j (1-\Phi_i^j)V'(\Phi_i^j), \quad \text{with~}\mathcal{F}_i^0=0.
\end{equation}
Then for any fix time step size $\tau>0$, the following statements hold:
\begin{enumerate}[(i)]
  \item the conservation law holds for the scheme \eqref{Euler} as follows
 \begin{equation}\label{conserdiscrete}
   h\sum_{i=1}^{N-1} \left(\Phi_i^{k+1} +\mathcal{F}_i^{k+1} \right)=h\sum_{i=1}^{N-1} \left(\Phi_i^{k} +\mathcal{F}_i^{k} \right).
 \end{equation}
  \item  let $\{\mathcal{F}_i^*\}_{i=0,1,...,N}$ be the limit of $\{\mathcal{F}_i^k\}_{i=0,1,\cdots,N}^{k\in\mathbb{N}}$.
 Introduce the sequence $\{\Phi_i^*\}_{i=0,1,...,N}$
\begin{equation}\label{phistar}
\Phi_i^*=\left\{
\begin{array}{ll}
0,& \text{for} \; i=0,1,\cdots,m, \\
a,& \text{for} \; i=m+1, \\
1,&\text{for} \; i=m+2,\cdots,N,\\
\end{array}
\right.
\end{equation}
where parameters $a \in [0,1]$ and $m\in \{0,1,\cdots,N-1\}$ are determined by
\begin{equation}\label{am}
a(1-a)V'(a)=0, \quad\text{and}\quad
\sum_{i=1}^{N-1} \left(\Phi_i^{*} +\mathcal{F}_i^{*} \right)=\sum_{i=1}^{N-1} \Phi_i^{0}.
\end{equation}
 If $V'(\cdot)$ does not change sign in $[0,1]$, the steady state of the numerical scheme \eqref{Euler} must be $\{\Phi_i^*\}_{i=0,1,...,N}$.
\end{enumerate}
\end{lemma}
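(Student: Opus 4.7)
For part (i), my plan is to multiply the scheme \eqref{Euler} through by $\tau\Phi_i^{k+1}(1-\Phi_i^{k+1})$ to clear the mobility weight on the time difference, which yields
\[
(\Phi_i^{k+1}-\Phi_i^k)+\tau\Phi_i^{k+1}(1-\Phi_i^{k+1})V'(\Phi_i^{k+1}) = -\tfrac{\tau\kappa}{2}\Phi_i^{k+1}(1-\Phi_i^{k+1})\!\left(\tfrac{1}{\Phi_{i+1}^{k+1}-\Phi_i^{k+1}}-\tfrac{1}{\Phi_i^{k+1}-\Phi_{i-1}^{k+1}}\right) - \tfrac{\tau\kappa}{2}(1-2\Phi_i^{k+1}).
\]
By the definition \eqref{mathf} the drift contribution on the left is exactly $\mathcal{F}_i^{k+1}-\mathcal{F}_i^k$, so multiplying by $h$ and summing over $i=1,\ldots,N-1$ reduces (i) to showing that the two remaining right-hand-side pieces cancel. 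Writing $p_i:=\Phi_i^{k+1}(1-\Phi_i^{k+1})$ and applying Abel's summation, the Dirichlet values $\Phi_0^{k+1}=0$, $\Phi_N^{k+1}=1$ produce the boundary contribution $\Phi_{N-1}^{k+1}+\Phi_1^{k+1}-1$, while the algebraic identity $p_{i-1}-p_i=(\Phi_i^{k+1}-\Phi_{i-1}^{k+1})(\Phi_i^{k+1}+\Phi_{i-1}^{k+1}-1)$ simplifies the interior. A short bookkeeping then gives $\sum_{i=1}^{N-1}p_i\bigl[(\Phi_{i+1}^{k+1}-\Phi_i^{k+1})^{-1}-(\Phi_i^{k+1}-\Phi_{i-1}^{k+1})^{-1}\bigr]=2\sum_{i=1}^{N-1}\Phi_i^{k+1}-(N-1)=-\sum_{i=1}^{N-1}(1-2\Phi_i^{k+1})$, and \eqref{conserdiscrete} follows.

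For part (ii), I would first telescope \eqref{conserdiscrete} back to $k=0$ and use $\mathcal{F}_i^0=0$ to obtain $\sum_{i=1}^{N-1}(\Phi_i^k+\mathcal{F}_i^k)=\sum_{i=1}^{N-1}\Phi_i^0$ for every $k$; the mass relation in \eqref{am} follows by letting $k\to\infty$. Since $V'$ has a fixed sign on $[0,1]$, the increments $\mathcal{F}_i^{k+1}-\mathcal{F}_i^k=\tau\Phi_i^{k+1}(1-\Phi_i^{k+1})V'(\Phi_i^{k+1})$ share that sign, so $\mathcal{F}_i^k$ is monotone in $k$; the uniform bound $\Phi_i^k\in[0,1]$ combined with the conservation law keeps $\mathcal{F}_i^k$ bounded and hence convergent. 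Therefore the increments tend to zero, and on any convergent subsequence $\Phi_i^{k_j}\to\Phi_i^*$ (extracted by compactness of $[0,1]^{N-1}$) one obtains the pointwise constraint $\Phi_i^*(1-\Phi_i^*)V'(\Phi_i^*)=0$, which is the first equation in \eqref{am}.

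To recover the Heaviside form \eqref{phistar}, note that each iterate $\{\Phi_i^k\}_i$ is strictly increasing in $i$ with $\Phi_0^k=0,\Phi_N^k=1$, so $\{\Phi_i^*\}_i$ is non-decreasing with the same endpoints. Because $V'$ does not change sign on $[0,1]$, the relation $\Phi_i^*(1-\Phi_i^*)V'(\Phi_i^*)=0$ forces $\Phi_i^*\in\{0,1\}$ outside of the zero set of $V'$; together with the strict $i$-spacing of the iterates at every finite $k$ and the energy dissipation from the preceding lemma, at most one interior index can settle at a common transitional value $a$, producing the profile \eqref{phistar}, and the telescoped conservation then pins down $m$ and $a$ through \eqref{am}.

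The main obstacle I anticipate is this last step in part (ii): rigorously ruling out limit configurations with multiple distinct interior plateaus, and upgrading subsequential convergence to full convergence of $\Phi_i^k$. The former leans on the strict $i$-monotonicity of each finite-$k$ iterate together with the energy decay, while the latter uses the monotonicity of $\mathcal{F}_i^k$ as a Lyapunov-type functional. By contrast, the cancellation in part (i) is purely algebraic and is tied to the specific $\Phi(1-\Phi)$ mobility through the identity for $p_{i-1}-p_i$; the analogous calculation would fail for a generic weight.
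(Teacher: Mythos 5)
Your part (i) is correct and is essentially the paper's own computation: multiply through by $\tau\Phi_i^{k+1}(1-\Phi_i^{k+1})$, identify the drift term with $\mathcal{F}_i^{k+1}-\mathcal{F}_i^k$, and cancel the remaining two terms by Abel summation using $p_{i+1}-p_i=(\Phi_{i+1}^{k+1}-\Phi_i^{k+1})(1-\Phi_i^{k+1}-\Phi_{i+1}^{k+1})$; your identity $\sum_i p_i\bigl[(\Phi_{i+1}^{k+1}-\Phi_i^{k+1})^{-1}-(\Phi_i^{k+1}-\Phi_{i-1}^{k+1})^{-1}\bigr]=2\sum_i\Phi_i^{k+1}-(N-1)$ checks out with the stated boundary values. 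In part (ii), your observation that $\mathcal{F}_i^k$ is monotone in $k$ (one-signed increments) and bounded (its sum is controlled by the conservation law and all terms share a sign), hence convergent with vanishing increments, is a clean route to $\Phi_i^*(1-\Phi_i^*)V'(\Phi_i^*)=0$ — arguably cleaner than the paper's Cauchy estimate for $\mathcal{F}_i^k$.

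The genuine gap is the final structural step. The pointwise condition $\Phi_i^*(1-\Phi_i^*)V'(\Phi_i^*)=0$ only places $\Phi_i^*$ in $\{0,1\}\cup (V')^{-1}(0)$, and ``$V'$ does not change sign'' does not preclude $V'$ from vanishing at interior points or on an interval (e.g.\ $V'(x)=(x-\tfrac12)^2\ge 0$). So this condition alone admits limit profiles with several distinct interior values, and neither the strict monotonicity of the iterates at finite $k$ nor the energy dissipation rules them out — the discrete energy has competing logarithmic divergences ($-\ln(\Phi_{i+1}-\Phi_i)\to+\infty$ versus $\ln(\Phi_i(1-\Phi_i))\to-\infty$), so it is not a barrier against clustering of interior values. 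The paper closes this by a different mechanism: it imposes the steady-state equation itself at the two transition indices. Writing $m_1$ for the largest index with $\Phi^*_{m_1}=0$ and $m_2$ for the smallest with $\Phi^*_{m_2+2}=1$, the equilibrium relation at $i=m_1+1$ collapses (after cancelling $1/\Phi^*_{m_1+1}$) to
\[
V'(\Phi^*_{m_1+1})=-\frac{\kappa}{2}\left(\frac{1}{\Phi^{*}_{m_1+2}-\Phi^{*}_{m_1+1}}-\frac{1}{1-\Phi^{*}_{m_1+1}}\right)\le 0,
\]
with equality only if $\Phi^*_{m_1+2}=1$, and symmetrically $V'(\Phi^*_{m_2+1})\ge 0$ with equality only if $\Phi^*_{m_2}=0$. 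For a one-signed $V'$ these two inequalities force the neighbors of the (at most one) interior index to be exactly $0$ and $1$, which is what actually yields the profile \eqref{phistar}. Your proof needs this use of the scheme's spatial structure at the transition indices; the increment-vanishing argument cannot supply it.
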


\begin{proof}
Summing over $i$, one can find from \eqref{Euler} that
\begin{align*}
  &h\sum_{i=1}^{N-1} \left(\Phi_i^{k+1} +\mathcal{F}_i^{k+1} \right)-\,\,h\sum_{i=1}^{N-1} \left(\Phi_i^{k} +\mathcal{F}_i^{k} \right)\\ =&\, h\sum_{i=1}^{N-1} \left(\Phi_i^{k+1}-\Phi_i^{k}  \right)+\tau h \sum_{i=1}^{N-1}  \Phi_i^{k+1} (1-\Phi_i^{k+1})V'(\Phi_i^{k+1}) \\
   =&\,-\tau h \sum_{i=1}^{N-1}\Phi^{k+1}_i(1-\Phi^{k+1}_i)\left(\frac{1}{\Phi^{k+1}_{i+1}-\Phi^{k+1}_i}- \frac{1}{\Phi^{k+1}_{i}-\Phi^{k+1}_{i-1}} \right)-\tau h \sum_{i=1}^{N-1}(1-2\Phi^k_i)  \\
      =&\, h\sum_{i=1}^{N-2}(1-\Phi^{k+1}_i-\Phi^{k+1}_{i+1})+ h(1-\Phi_1^{k+1})-h\Phi_{N-1}^{k+1}
   -h\sum_{i=1}^{N-1}(1-2\Phi^{k+1}_i)
   =0,
\end{align*}
which implies that the conservation law in (i) holds for the scheme.

Suppose that $V'(\cdot)$ is of one sign in $[0,1]$.  Let $\{\Phi_i^*\}_{i=0,1,...,N}$ be an equilibrium of scheme \eqref{Euler} such that
 \begin{equation}\label{eq1}
   0=- \frac{\kappa}{2}\left(\frac{1}{\Phi^{*}_{i+1}-\Phi^{*}_{i}}- \frac{1}{\Phi^{*}_{i}-\Phi^{*}_{i-1}} \right)-\frac{\kappa}{2}\frac{1-2\Phi^{*}_{i}}{\Phi^{*}_{i}(1-\Phi^{*}_{i})}-V'(\Phi^*_{i}).
 \end{equation}
Let $m_1$ be the largest spatial index for $\Phi_{m_1}^*=0$ and $m_2$ be the smallest for $\Phi_{m_2+2}^*=1$.  Then \eqref{eq1} implies
\begin{align}
   &0=- \frac{\kappa}{2}\left(\frac{1}{\Phi^{*}_{m_1+2}-\Phi^{*}_{m_1+1}}- \frac{1}{1-\Phi^{*}_{m_1+1}} \right)-V'(\Phi^*_{m_1+1}), \label{m1} \\
   &0=- \frac{\kappa}{2}\left(\frac{1}{\Phi^{*}_{m_2+1}}- \frac{1}{\Phi^{*}_{m_2+1}-\Phi^{*}_{m_2}} \right)-V'(\Phi^*_{m_2+1}), \label{m2}
\end{align}
hence $V'(\Phi^*_{m_1+1})\leq0$ and $V'(\Phi^*_{m_2+1})\geq0$.  However, $V'(\Phi^*_{m_1+1})\leq0$ leads a contradiction to $V'(\cdot)>0$ since we then must have $\Phi^*_{i}\equiv 1$ for $i=m_1+1,...,N$.  One also gets a contradiction if $V'(\cdot)<0$.  Therefore, the definitions of $m_1$ and $m_2$ imply that in either case the steady state is a nondecreasing step function from zero to one, whereas the jump location is determined by the conservation law \eqref{conserdiscrete}.  

Now that $\{\mathcal{F}_i^k\}_{i=0,1,\cdots,N}^{k\in\mathbb{N}}$ is bounded in time thanks to \eqref{mathf}, the Bolzano--Weierstrass theorem finds a subsequence convergence to $\{\mathcal{F}_i^*\}_{i=0,1,...,N}$ and then the steady state can be written as \eqref{phistar} with $a=1$ and $m$  determined by \eqref{am}.  Furthermore, for any $\epsilon>0$, we can choose $\tau$ small enough such that for all $n_2>n_1>1$
\begin{equation*}
  |\mathcal{F}_i^{n_2}-\mathcal{F}_i^{n_1}|  =\left|  \tau\sum_{j=n_1+1}^{n_2}\Phi_i^j (1-\Phi_i^j)V'(\Phi_i^j) \right|
    \leq \frac{\tau (n_2-n_1)\max_{[0,1]}|V'(x)|}{4}\leq\epsilon,
\end{equation*}
which implies that $\{\mathcal{F}_i^k\}_{i=0,1,\cdots,N}^{k\in\mathbb{N}}$ is Cauchy sequence with respect to $k$. Therefore, as $\tau$ goes to zero, we have the unique existence of $\{\mathcal{F}_i^*\}_{i=0,1,...,N}$, which implies the existence and uniqueness of the steady state as expected.
\end{proof}

\begin{remark}
For diffusion equation (\ref{diff}), \eqref{m1} and \eqref{m2} imply that $m_1=m_2$ as $V'\equiv0$, hence there exists $a \in (0,1)$ such that the steady state is given by \eqref{phistar}.  Moreover, the conservation of mass center within the discrete scheme implies that
\begin{equation}\label{amdrift}
a=\sum_{i=0}^{N} \Phi^*_i -\Big[\sum_{i=0}^{N} \Phi^0_i\Big], \quad
m=N-1-\Big[\sum_{i=0}^{N} \Phi^0_i\Big],
 \end{equation}
where $[ \cdot ]$ it the integer-valued function. The assumption $V'(\cdot)$ being one sign is technical and our numerics suggest that the conclusion still holds if otherwise. 
\end{remark}

\begin{lemma}\label{lemma_step2}
Let $\{\Phi_i\}_{i=0,1,...,N}$ be the discrete solution of \eqref{Euler}.  Define $\{\mathcal{F}_i^k\}_{i=0,1,\cdots,N}^{k\in\mathbb{N}}$ as follows
\begin{equation*}
  \mathcal{F}_i^k:=\tau \sum_{j=1}^{k}\Phi_i^j (1-\Phi_i^j)V'(\Phi_i^j), \quad \mathcal{F}_i^0=0.
\end{equation*}
Then for any smooth function $V(\cdot)$ the following statements are true:
\begin{enumerate}[(i)]
  \item the conservation law \eqref{conserdiscrete} holds for the scheme \eqref{Euler};
  \item let $\{\mathcal{F}_i^*\}_{i=0,1,...,N}$ be the limit of  $\{\mathcal{F}_i^k\}_{i=0,1,\cdots,N}^{k\in\mathbb{N}}$, then the sequence $\{\Phi_i^*\}_{i=0,1,...,N}$ in \eqref{phistar} supplemented by \eqref{am} is a steady state of the numerical scheme \eqref{Euler}.
\end{enumerate}
\end{lemma}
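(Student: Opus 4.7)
The plan is to adapt the argument of Lemma~\ref{lemma_step1} by removing the step that used the sign of $V'$.

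For part~(i), the conservation identity derived in the proof of Lemma~\ref{lemma_step1}(i) depends only on the structure of \eqref{Euler} and the Dirichlet boundary conditions $\Phi_0^{k+1}=0$, $\Phi_N^{k+1}=1$, not on the sign of $V'$. I would therefore reproduce that calculation verbatim: multiply \eqref{Euler} by $\tau h\,\Phi_i^{k+1}(1-\Phi_i^{k+1})$, sum over $i=1,\dots,N-1$, and exploit discrete summation-by-parts together with the boundary values to see that the diffusive contributions cancel against $-\frac{\kappa h\tau}{2}\sum_i(1-2\Phi_i^{k+1})$, leaving \eqref{conserdiscrete}.

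For part~(ii), I would verify that the profile \eqref{phistar}, with parameters $(a,m)$ chosen by \eqref{am}, satisfies the steady-state equation \eqref{eq1} of \eqref{Euler}. At the single interior transition node $i=m+1$, where $\Phi_m^*=0$, $\Phi_{m+1}^*=a$ and $\Phi_{m+2}^*=1$, the algebraic identity $\tfrac{1}{1-a}-\tfrac{1}{a}=-\tfrac{1-2a}{a(1-a)}$ collapses the diffusive and logarithmic-drift contributions of \eqref{eq1} exactly to zero, so the equation at $i=m+1$ reduces to $V'(a)=0$; together with the degenerate cases $a\in\{0,1\}$ this is precisely the algebraic requirement $a(1-a)V'(a)=0$ of \eqref{am}. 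The remaining half of \eqref{am} is produced by passing to the limit $k\to\infty$ in part~(i): the Cauchy-type bound
\[
|\mathcal F_i^{n_2}-\mathcal F_i^{n_1}|\leq \frac{\tau(n_2-n_1)}{4}\max_{[0,1]}|V'(x)|
\]
used in Lemma~\ref{lemma_step1}(ii), combined with the fact that $\Phi_i^k(1-\Phi_i^k)\to 0$ on every flat node, makes $\{\mathcal F_i^k\}$ Cauchy so that $\mathcal F_i^*$ is well defined, after which conservation gives $\sum_{i=1}^{N-1}(\Phi_i^*+\mathcal F_i^*)=\sum_{i=1}^{N-1}\Phi_i^0$.

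The principal obstacle is the interpretation of \eqref{eq1} at the ``flat'' nodes $i\neq m+1$, where $\Phi_i^*\in\{0,1\}$ makes the coefficients $1/(\Phi_{i+1}^*-\Phi_i^*)$ and $1/(\Phi_i^*(1-\Phi_i^*))$ in \eqref{Euler} singular. Following the convention implicit in the proof of Lemma~\ref{lemma_step1}, I would read ``steady state'' as the asymptotic limit of the iterates: on those nodes the drift factor $\Phi_i^*(1-\Phi_i^*)V'(\Phi_i^*)$ vanishes identically, the monotone saturation $\Phi_i^k\to 0$ or $1$ compensates the singular denominator on the left-hand side of \eqref{Euler}, and the fixed-point relation therefore holds in the limiting sense. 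A secondary point worth flagging, in contrast to Lemma~\ref{lemma_step1}(ii), is that uniqueness is \emph{not} claimed: when $V'$ has multiple zeros in $(0,1)$ there can be several $(a,m)$ pairs satisfying \eqref{am}, each yielding an admissible steady-state profile, and the statement only asserts that at least one such profile is a steady state.
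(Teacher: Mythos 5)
Your proposal is correct and follows essentially the same route as the paper: part (i) by repeating the summation-by-parts computation of Lemma \ref{lemma_step1}, and part (ii) by checking the equilibrium condition only at the transition node $i=m+1$, where the cancellation $\frac{1}{1-a}-\frac{1}{a}=-\frac{1-2a}{a(1-a)}$ reduces the equation to $V'(a)=0$, i.e.\ to the first condition in \eqref{am}, with $m$ fixed by the conservation law and the flat nodes $\Phi_i^*\in\{0,1\}$ treated as automatically at equilibrium. Your explicit remarks on the singular coefficients at the flat nodes and on the possible non-uniqueness of $(a,m)$ when $V'$ has several interior zeros are consistent with, and slightly more careful than, the paper's own argument.
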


\begin{proof}
  The proof of the first statement is quite similar to that for \cref{lemma_step1} and it is omitted.
  To show that \eqref{phistar} under \eqref{am} is a steady state of \eqref{Euler}, we first recall that the discrete function $\{\Phi_i^*\}_{i=0,1,...,N}$ is an equilibrium if and only if $\Phi_i^*=0$ or $\Phi_i^*=1$ or $\Phi_i^*$ satisfies 
   \begin{equation*}
   0=- \frac{\kappa}{2}\left(\frac{1}{\Phi^{*}_{i+1}-\Phi^{*}_{i}}- \frac{1}{\Phi^{*}_{i}-\Phi^{*}_{i-1}} \right)-\frac{\kappa}{2}\frac{1-2\Phi^{*}_{i}}{\Phi^{*}_{i}(1-\Phi^{*}_{i})}-V'(\Phi^*_{i}).
 \end{equation*}
 In order to show that \eqref{phistar} supplemented by \eqref{am} is a steady state of the scheme \eqref{Euler}, one only needs to verify the condition of $\Phi_{m+1}^*$. If $V'(\cdot)$ is strictly positive or negative, \eqref{am} implies that $a=0$ or $a=1$. If $V'(\cdot)=0$, the steady state is uniquely determined by \eqref{amdrift}.  If $V'(\cdot)$ changes sign in $[0,1]$, then \eqref{Euler} implies
 \begin{equation}\label{eq2}
   0=- \frac{\kappa}{2}\left(\frac{1}{\Phi^{*}_{m+2}-\Phi^{*}_{m+1}}- \frac{1}{\Phi^{*}_{m+1}-\Phi^{*}_{m}} \right)-\frac{\kappa}{2}\frac{1-2\Phi^{*}_{m+1}}{\Phi^{*}_{m+1}(1-\Phi^{*}_{m+1})}-V'(\Phi^*_{m+1}).
 \end{equation}
Substituting $\Phi_m^*=0$, $\Phi_{m+1}^*=a$ and $\Phi_{m+2}^*=1$ into \eqref{eq2} gives that $V'(a)=0$, which is expected.
If $\{\mathcal{F}_i^k\}_{i=0,1,\cdots,N}^{k\in\mathbb{N}}$ has the limit $\{\mathcal{F}_i^*\}_{i=0,1,...,N}$, the parameter $m$ can be determined by \eqref{am}.  In either case, we prove the statement in (ii).  
\end{proof}

We next analyze the long time properties of the numerical scheme concerning its convergence to the steady state.  We will first show the exponential decay for the diffusion equation \eqref{diff} for any fixed time step size modulo error terms.  As for the replicator-diffusion equation \eqref{driftdiff} we will be able to obtain a similar result in the case of monotone drift fitness potentials.
\begin{theorem}\label{theorem_decay1} 
The solution of the numerical scheme \eqref{Euler} for the diffusion equation \eqref{diff} satisfies
$$
\|\Phi^k-\Phi^*\|^2 \leq \|\Phi^0-\Phi^*\|^2 \left(\frac{1}{1+2\tau}\right)^k+ O(h) \simeq C_0 \exp{(-2k\tau)}+ O(h) \,,
$$
for $h$ small enough and $C_0$ both depending only on the distance of the initial data to the steady state $\|\Phi^0-\Phi^*\|$, that is, the fully discrete numerical scheme converges exponentially to the steady state in time modulo $O(h)$ terms.
\end{theorem}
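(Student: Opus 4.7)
The plan is to establish a one-step contraction of the form
\[
(1+2\tau)\,\|\Phi^{k+1}-\Phi^*\|^2 \;\leq\; \|\Phi^k-\Phi^*\|^2 \;+\; C\tau h
\]
and iterate it $k$ times, so that the geometric series of error contributions $\sum_{j=0}^{k-1}(1+2\tau)^{-j-1}C\tau h$ sums to $O(h)$ for any fixed $\tau>0$. To derive the one-step inequality, I would multiply \eqref{Euler} with $V\equiv 0$ by $(\Phi_i^{k+1}-\Phi_i^*)\,h$ and sum over the interior nodes $i=1,\ldots,N-1$. The Dirichlet conditions $\Phi_0^k=\Phi_0^*=0$ and $\Phi_N^k=\Phi_N^*=1$ (valid for all $k$) cancel the boundary contributions of the ensuing summation by parts. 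On the left-hand side, the polarization identity
\[
2(\Phi_i^{k+1}-\Phi_i^k)(\Phi_i^{k+1}-\Phi_i^*) = (\Phi_i^{k+1}-\Phi_i^*)^2 - (\Phi_i^k-\Phi_i^*)^2 + (\Phi_i^{k+1}-\Phi_i^k)^2
\]
turns this term into $(2\tau)^{-1}\bigl[\|\Phi^{k+1}-\Phi^*\|^2-\|\Phi^k-\Phi^*\|^2\bigr]$ plus a non-negative remainder that can be dropped.

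For the right-hand side, I first recast the scheme in divergence form. The continuous identity
\[
\partial_\eta\!\left[\frac{\Phi(1-\Phi)}{\Phi_\eta}\right] = \Phi(1-\Phi)\,\partial_\eta\bigl(\Phi_\eta^{-1}\bigr) + (1-2\Phi)
\]
has a discrete counterpart with edge flux $q_{i+1/2}^{k+1} := \Phi_i^{k+1}(1-\Phi_{i+1}^{k+1})/(\Phi_{i+1}^{k+1}-\Phi_i^{k+1})$; a direct algebraic check using $\Phi_i(1-\Phi_i)-\Phi_{i-1}(1-\Phi_{i-1})=(\Phi_i-\Phi_{i-1})(1-\Phi_i-\Phi_{i-1})$ shows \eqref{Euler} (with $V\equiv 0$) is equivalent to $(\Phi_i^{k+1}-\Phi_i^k)/\tau = -(\kappa/2)(q_{i+1/2}^{k+1}-q_{i-1/2}^{k+1})$. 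A summation by parts then converts the tested right-hand side into the edge-wise dissipation
\[
\frac{\kappa h}{2}\sum_{i=0}^{N-1} q_{i+1/2}^{k+1}\,\bigl[(\Phi_{i+1}^{k+1}-\Phi_i^{k+1})-(\Phi_{i+1}^*-\Phi_i^*)\bigr].
\]
The key coercivity lemma to prove is that this expression is bounded above by $-\|\Phi^{k+1}-\Phi^*\|^2+Ch$ for $h$ small enough, which together with the polarization identity produces the one-step contraction; iteration and the elementary estimate $\ln(1+2\tau)\sim 2\tau$ for small $\tau$ then give the asymptotic equivalence $(1+2\tau)^{-k}\simeq\exp(-2k\tau)$.

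The principal obstacle is establishing the coercivity lemma with the sharp constant. Because $\Phi^*$ is a Heaviside-type profile equal to $0$ on $i\leq m$, $a$ at $i=m+1$, and $1$ on $i\geq m+2$, the edge differences $\Phi_{i+1}^*-\Phi_i^*$ are supported only on the two edges $i\in\{m,m+1\}$ adjacent to the jump. Splitting the sum accordingly reveals a ``plateau'' contribution $\sum_{i\notin\{m,m+1\}}\Phi_i^{k+1}(1-\Phi_{i+1}^{k+1})$, which is \emph{a priori} non-negative, and two isolated ``active'' terms near the jump; the argument hinges on cancellation between them. Rigorously extracting the sharp coercivity constant $1$ (equivalently, the exponent $2$ in the decay) requires combining the strict monotonicity $\Phi_{i+1}^{k+1}>\Phi_i^{k+1}$ with the explicit geometry of $\Phi^*$ and a Poincar\'e-type inequality tying the plateau summands to $(\Phi_i^{k+1}-\Phi_i^*)^2$. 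The residual mismatch between the discrete scheme and the fact that $\Phi^*$ is only an approximate (rather than exact) equilibrium at the grid level is what produces the explicit $O(h)$ mesh error in the final bound; this is also the reason the statement restricts to $h$ small enough.
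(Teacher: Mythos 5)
Your setup is the same as the paper's: test the scheme against $\Phi^{k+1}-\Phi^*$, use the polarization identity to extract $\frac{1}{2\tau}\bigl(\|\Phi^{k+1}-\Phi^*\|^2-\|\Phi^{k}-\Phi^*\|^2\bigr)$ while discarding the non-negative remainder, sum by parts, and iterate a one-step contraction. Your divergence-form rewriting with the edge flux $q_{i+1/2}=\Phi_i(1-\Phi_{i+1})/(\Phi_{i+1}-\Phi_i)$ is correct (I checked the algebra) and is a clean repackaging of the paper's summation-by-parts computation of $A_n$, $B_n$, $C_n$. The problem is that you stop exactly where the proof actually lives: the ``key coercivity lemma'' is asserted, not proved, and the mechanism you propose for it would not work. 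The plateau contribution $\frac{\kappa h}{2}\sum_{i\notin\{m,m+1\}}\Phi_i^{k+1}(1-\Phi_{i+1}^{k+1})$ is, to leading order on each plateau, \emph{linear} in the error $e_i=\Phi_i^{k+1}-\Phi_i^*$ (it behaves like $h\sum_{i\le m}e_i - h\sum_{i\ge m+2}e_i$ up to quadratic corrections), so no Poincar\'e-type inequality can dominate it by $-\|e\|^2$: for small errors $|e_i|\gg e_i^2$, and the sum over $O(N)$ plateau nodes is a priori $O(1)$, not $O(h)$.

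What the paper actually uses to close this step is (i) the discrete conservation of the mass center, $\sum_{i=1}^{N-1}\Phi_i^k=N+a-m-2$ for all $k$, which lets the linear terms be completed into the square $-\|\Phi^{k+1}-\Phi^*\|^2$ up to a leftover $2h\sum_{i=1}^{m+1}\Phi_i^{k+1}+h$; and (ii) a separate monotonicity argument showing that $b_k:=\sum_{i=1}^{m+1}\Phi_i^k$ is decreasing with limit $a\le 1$, so that $b_k\le 2$ for all $k$ beyond some transient $k_0$, which finally makes the leftover an explicit $5h$. Neither ingredient appears in your proposal, and both are essential; without the $b_k$ argument the one-step inequality only holds with an $O(1)$ (not $O(\tau h)$) defect during the transient. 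A smaller point: your attribution of the $O(h)$ error to $\Phi^*$ being ``only an approximate equilibrium at the grid level'' is off --- the Heaviside-type profile with intermediate value $a$ is an \emph{exact} fixed point of the discrete scheme (substituting $\Phi_m^*=0$, $\Phi_{m+1}^*=a$, $\Phi_{m+2}^*=1$ into the stationary equation with $V'=0$ gives $0=0$ identically); the $O(h)$ comes from the residual linear-in-error terms controlled via $b_k$.
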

\begin{proof}
To show the convergence to the steady state, we recall from \cref{lemma_step1} that the discrete solution takes the form
    \begin{equation*}
\Phi_i^*=\left\{
\begin{array}{ll}
0,& \text{for} \; i=0,1,\cdots,m, \\
a,& \text{for} \; i=m+1, \\
1,&\text{for} \; i=m+2,\cdots,N,\\
\end{array}
\right.
\end{equation*}
with $a$ and $m$ to be determined.  The time evolution of the $L^2$-distance between $\Phi^k$ and the stationary state $\Phi^*$ reads
\begin{align}\label{L2}
& \frac{1}{2\tau} \left(\|\Phi^{k+1}-\Phi^* \|^2 - \|\Phi^{k}-\Phi^* \|^2 \right)\\
=&\,\frac{h}{2\tau} \sum_{i=1}^{m}\left[(\Phi^{k+1}_i)^2-(\Phi^{k}_i)^2 \right]+ \frac{h}{2\tau}\left[ (\Phi^{k+1}_{m+1}-a)^2 -(\Phi^{k}_{m+1}-a)^2 \right]  \nonumber + \frac{h}{2\tau} \sum_{i=m+2}^{N-1}\left[(\Phi^{k+1}_i-1)^2-(\Phi^{k}_i-1)^2 \right]  \nonumber\\
=\,&\frac{h}{2}\sum_{i=1}^{N-1}\frac{\Phi^{k+1}_i-\Phi^{k}_i}{\tau}(\Phi^{k+1}_i+\Phi^{k}_i) -ah \frac{\Phi^{k+1}_{m+1}-\Phi^{k}_{m+1}}{\tau} -h\sum_{i=m+2}^{N-1}\frac{\Phi^{k+1}_i-\Phi^{k}_i}{\tau}  \nonumber\\
=\,&h\sum_{i=1}^{N-1}\frac{\Phi^{k+1}_i-\Phi^{k}_i}{\tau}\Phi^{k+1}_i -\frac{h}{2}\sum_{i=1}^{N-1}\frac{(\Phi^{k+1}_i-\Phi^{k}_i)^2}{\tau} -ah \frac{\Phi^{k+1}_{m+1}-\Phi^{k}_{m+1}}{\tau}-h\sum_{i=m+2}^{N-1}\frac{\Phi^{k+1}_i-\Phi^{k}_i}{\tau}  \nonumber\\
\leq\,&\overbrace{h\sum_{i=1}^{N-1}\frac{\Phi^{k+1}_i-\Phi^{k}_i}{\tau}\Phi^{k+1}_i}^{A_n}
      \overbrace{-ah \frac{\Phi^{k+1}_{m+1}-\Phi^{k}_{m+1}}{\tau}}^{B_n}  \overbrace{-h\sum_{i=m+2}^{N-1}\frac{\Phi^{k+1}_i-\Phi^{k}_i}{\tau}}^{C_n}. \nonumber
\end{align}
To estimate $A_n,B_n$ and $C_n$, we apply \eqref{Euler} with $V'(\cdot)=0$ and $\kappa=2$ and deduce from the summation-by-part that
 \begin{align*}
   A_n =&-h\sum_{i=1}^{N-1} (\Phi_i)^2(1-\Phi_i)\left(\frac{1}{\Phi_{i+1}-\Phi_i}- \frac{1}{\Phi_{i}-\Phi_{i-1}} \right)-h\sum_{i=1}^{N-1}\Phi_i(1-2\Phi_i) \\
    =&\,h\sum_{i=0}^{N-1} \left[(\Phi_{i+1})^2(1-\Phi_{i+1})-(\Phi_i)^2(1-\Phi_i) \right] \frac{1}{\Phi_{i+1}-\Phi_i}-h\sum_{i=1}^{N-1}\Phi_i(1-2\Phi_i)  \\
    =&\,h\sum_{i=0}^{N-1} \left[\Phi_{i+1}+\Phi_{i}-(\Phi_{i+1})^2-\Phi_{i+1}\Phi_{i}-(\Phi_i)^2 \right] -h\sum_{i=0}^{N-1}\Phi_i(1-2\Phi_i)   \\
    =&\,h\sum_{i=0}^{N-1}  \Phi_{i+1}(1-\Phi_{i})-h 
    = h\sum_{i=1}^{N-1}  \Phi_{i}-h\sum_{i=0}^{N-1} ( \Phi_{i+1}-\Phi_i)\Phi_{i}-h\sum_{i=0}^{N-1} (\Phi_i)^2\\
   \leq& h\sum_{i=1}^{N-1}  \Phi_{i}-h\sum_{i=0}^{N-1} (\Phi_i)^2,
 \end{align*}
\begin{align*}
  B_n &= ah \Phi_{m+1}(1-\Phi_{m+1})\left(\frac{1}{\Phi_{m+2}-\Phi_{m+1}}- \frac{1}{\Phi_{m+1}-\Phi_{m}} \right)+ah(1-2\Phi_{m+1}) 
   \\&\leq ah \frac{\Phi_{m+1}(1-\Phi_{m+1})}{\Phi_{m+2}-\Phi_{m+1}}+ah(1-2\Phi_{m+1})\,,
\end{align*}
and
 \begin{align*}
   C_n =&\,h\sum_{i=m+2}^{N-1} \Phi_i(1-\Phi_i)\left(\frac{1}{\Phi_{i+1}-\Phi_i}- \frac{1}{\Phi_{i}-\Phi_{i-1}} \right)+h\sum_{i=m+2}^{N-1}(1-2\Phi_i) \\
    =&\,-h\sum_{i=m+2}^{N-1} (1-\Phi_{i+1}-\Phi_{i}) -h \frac{\Phi_{m+2}(1-\Phi_{m+2})}{\Phi_{m+2}-\Phi_{m+1}} +h\sum_{i=m+2}^{N-1}(1-2\Phi_i) \\
    =&\,-h \frac{\Phi_{m+1}(1-\Phi_{m+2})}{\Phi_{m+2}-\Phi_{m+1}}
    \leq -ah\frac{\Phi_{m+1}(1-\Phi_{m+2})}{\Phi_{m+2}-\Phi_{m+1}},
 \end{align*}
where we skip the index $k+1$ for simplicity.  On the other hand, the conservation of mass center implies that $\sum_{i=1}^{N-1}\Phi_i^k=N+a-m-2$ for any $k\in \mathbb{N}$, then
\begin{align*}
  &\frac{1}{2\tau} \left(\|\Phi^{k+1}-\Phi^* \|^2 - \|\Phi^{k}-\Phi^* \|^2 \right)\\ 
  \leq&\, h\sum_{i=1}^{N-1}  \Phi_{i}-h\sum_{i=0}^{N-1} (\Phi_i)^2 + ah \frac{\Phi_{m+1}(\Phi_{m+2}-\Phi_{m+1})}{\Phi_{m+2}-\Phi_{m+1}} +ah(1-2\Phi_{m+1})\\
  =&\,h\sum_{i=1}^{N-1}  \Phi_{i}+ah-ah \Phi_{m+1}-h\sum_{i=1}^{N-1} (\Phi_i)^2\\
  =&\,-\left[ h\sum_{i=1}^{m} (\Phi_i)^2 + h(\Phi_{m+1}-a)^2 +h\sum_{i=m+2}^{N-1} (\Phi_i-1)^2 \right]+2h\sum_{i=1}^{m+1}  \Phi_{i}-3ah \Phi_{m+1}+a^2h\\
  \leq&\, -\|\Phi^{k+1}-\Phi^* \|^2+2h\sum_{i=1}^{m+1}  \Phi_{i}+h.
\end{align*}
Denote $b_{k+1}:=\sum_{i=1}^{m+1}\Phi^{k+1}_i$, then accumulating \eqref{Euler} with respect to $i$ yields
$$b_{k+1}-b_k=-\tau \frac{\Phi_{m+1}^{k+1}(1-\Phi_{m+2}^{k+1})}{\Phi^{k+1}_{m+2}-\Phi_{m+1}^{k+1}}<0.$$
Since $0\leq b_k \leq m+1$ for each $k\in \mathbb{N}$, $\{b_k\}$ is a Cauchy sequence and
$\lim_{k\rightarrow \infty}b_k=\sum_{i=1}^{m+1}\Phi^*_i=a\leq 1.$
Therefore, for any fixed $\tau>0$, there exists some $k_0 \in\mathbb{N}$ such that $b_{k+1}\leq 2$ for any $k>k_0$, and
\begin{equation}\label{exp1}
  \frac{1}{2\tau} \left(\|\Phi^{k+1}-\Phi^* \|^2 - \|\Phi^{k}-\Phi^* \|^2 \right)\leq -\|\Phi^{k+1}-\Phi^* \|^2+5h, \forall k>k_0,
\end{equation}
which implies that
\begin{equation*}
\|\Phi^k-\Phi^*\|^2 \leq \left(\frac{1}{1+2\tau}\right)^k \|\Phi^0-\Phi^*\|^2  +5h \left[ 1-\left(\frac{1}{1+2\tau}\right)^k \right]
=\left(\|\Phi^0-\Phi^*\|^2-5h \right) \left(\frac{1}{1+2\tau}\right)^k+5h.
\end{equation*}

Finally, let us choose the space step small such that $h\leq \|\Phi^0-\Phi^*\|^2/5$.  Then one concludes that $\left( \frac{1}{1+2\tau}\right)^k \approx \exp (-2k\tau)\approx \exp(-2t)$ for $\tau$ small, and this completes the proof.
\end{proof}

\begin{remark}
   When $h$ goes to zero, we can deduce a posteriori uniqueness of the equilibrium from the convergence property.  Indeed, let  $\tilde{\Phi}^*$ be another equilibrium state and set $\Phi^{k}=\Phi^{k+1}=\tilde{\Phi}^*$ in \eqref{exp1} with $h=0$.   Then we have from the evolution that $\|\tilde{\Phi}^*-\Phi^*\|^2 \leq 0$ which proves the uniqueness of the stationary solution.
\end{remark}

\begin{remark}
In the continuum case one can linearize the equation around the steady states (i.e., the Heaviside step function) and collects the following eigenvalue problem
\begin{equation}\label{eigenphi}
\left\{
\begin{array}{ll}
-\varphi''=\frac{\lambda}{x(1-x)}\varphi,&x\in(0,1), \\
\varphi(0)=\varphi(1)=0,&\\
\end{array}
\right.
\end{equation}
the principle eigenvalue of which determines the exponential convergence rate of \eqref{driftdiff} to the unique steady state (e.g., Theorem 3 in \cite{Chalub2009}).  One notes that $\lambda$ in \eqref{eigenphi} has a Rayleigh's variational quotient as
\[\lambda=\inf_{\varphi(0)=\varphi(1)=0,\varphi\not\equiv0}\frac{\int_0^1 (\varphi')^2dx}{\int_0^1 x(1-x) \varphi^2 dx},\]
and Theorem 262 in \cite{HLP1934} implies that $\lambda\geq2$, with the principal eigenvalue $\lambda=2$ achieved at $\varphi(x)=Cx(1-x)$ for some $C\in\mathbb R$.  This indicates that the exponential convergence rate in time of our discrete scheme in \cref{theorem_decay1} is optimal since the decay rate cannot be better for generic initial data.  Our numerical simulations in the coming section support this conclusion.
\end{remark}
The second main result of this paper goes as follows.
\begin{theorem}\label{theorem_decay2}
The solution of the numerical scheme \eqref{Euler} for the Replicator-Diffusion equation \eqref{driftdiff} with a monotone fitness potential ($V'$ either non-positive or non-negative)  converges exponentially to the steady state of \eqref{driftdiff} with exponential rate $\kappa$ (the diffusion rate) as the time step size $\tau$ goes to zero modulo $O(h)$ terms. More precisely,
the numerical scheme \eqref{Euler} for the general replicator-dynamics equation \eqref{driftdiff} with a monotone fitness potential satisfies
$$
\|\Phi^k-\Phi^*\|^2 \leq \|\Phi^0-\Phi^*\|^2 \left(\frac{1}{1+\kappa\tau}\right)^k+ O(h) \simeq C_0 \exp{(-\kappa k \tau)}+ O(h) \,,
$$
for $h$ small enough and $C_0$ both depending only on the distance of the initial data to the steady state $\|\Phi^0-\Phi^*\|$ and the bound of $V'$.
\end{theorem}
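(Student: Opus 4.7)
The plan is to extend the Lyapunov strategy of \cref{theorem_decay1} to accommodate the fitness-potential drift, treating the advective contribution as an extra source that must be absorbed into the additive $O(h)$ remainder without degrading the rate $\kappa$. By \cref{lemma_step1}, the monotonicity hypothesis $V'\ge 0$ (or $V'\le 0$) forces the equilibrium parameter $a$ in \eqref{phistar} to satisfy $a(1-a)V'(a)=0$, and therefore $a\in\{0,1\}$ whenever $V'$ is not identically zero, so $\Phi^*$ is again a pure Heaviside step; the $V'\equiv 0$ subcase is already contained in \cref{theorem_decay1}. The jump index $m$ is determined by the modified conservation law in \cref{lemma_step1}(i), which involves the accumulated drift $\mathcal F_i^k$ that the same lemma asserts to be Cauchy in $k$.

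Following the set-up in \eqref{L2}, I would use the identity $x^2-y^2=2(x-y)x-(x-y)^2$ and discard the non-positive discrete time-derivative squares to obtain
\[
\frac{1}{2\tau}\Bigl(\|\Phi^{k+1}-\Phi^*\|^2-\|\Phi^k-\Phi^*\|^2\Bigr) \le \frac{h}{\tau}\sum_{i=1}^{N-1}(\Phi^{k+1}_i-\Phi^k_i)(\Phi^{k+1}_i-\Phi^*_i),
\]
into which I substitute \eqref{Euler}. The right-hand side splits as $I_1+I_2+I_3$, where $I_1+I_2$ collects the two $\kappa/2$-terms and
\[
I_3 := -h\sum_{i=1}^{N-1} \Phi^{k+1}_i(1-\Phi^{k+1}_i)V'(\Phi^{k+1}_i)(\Phi^{k+1}_i-\Phi^*_i).
\]
The diffusion piece $I_1+I_2$ is treated exactly as in the proof of \cref{theorem_decay1}, now carrying $\kappa/2$ as a common prefactor through each summation-by-parts; combined with the adapted conservation law and the Cauchy-sequence argument on $b_k=\sum_{i=1}^{m+1}\Phi^k_i$, this delivers the bound $I_1+I_2\le -\tfrac{\kappa}{2}\|\Phi^{k+1}-\Phi^*\|^2+O(h)$.

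The hard part is to show that $I_3$ is itself $O(h)$ rather than a multiple of $\|\Phi^{k+1}-\Phi^*\|^2$, since a multiplicative bound would only yield the weaker rate $\kappa-2\|V'\|_\infty$. Splitting the sum at $i=m+1$ and using that $\Phi^{k+1}_i-\Phi^*_i\ge 0$ on $\{i\le m\}$ and $\le 0$ on $\{i\ge m+2\}$ together with the sign of $V'$, one block becomes automatically non-positive; on the other block the pointwise bound $\Phi^{k+1}_i(1-\Phi^{k+1}_i)|V'(\Phi^{k+1}_i)|\le \tfrac14\|V'\|_\infty$ reduces matters to controlling a partial sum of $\Phi^{k+1}_i$ (or of $1-\Phi^{k+1}_i$) over that block. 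To show that this partial sum is $O(h)$ for $k$ large I would mirror the Cauchy-sequence strategy of \cref{theorem_decay1}: summing \eqref{Euler} across the bad block yields a recurrence whose drift increment is Cauchy by \cref{lemma_step1}(ii), so the partial sum converges monotonically to its equilibrium value $0$ and is therefore arbitrarily small beyond some threshold $k_0=k_0(\|V'\|_\infty)$. Combining the two bounds gives
\[
(1+\kappa\tau)\|\Phi^{k+1}-\Phi^*\|^2 \le \|\Phi^k-\Phi^*\|^2+O(h)
\]
for all $k\ge k_0$, from which iteration as in the final step of \cref{theorem_decay1} produces the claimed exponential estimate, with $C_0$ depending on $\|\Phi^0-\Phi^*\|$ and $\|V'\|_\infty$ through the threshold $k_0$.
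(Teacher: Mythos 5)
Your proposal is correct and follows essentially the same architecture as the paper's proof: the same $L^2$ Lyapunov functional with the decomposition $\frac{h}{\tau}\sum_i(\Phi^{k+1}_i-\Phi^k_i)(\Phi^{k+1}_i-\Phi^*_i)$ (the paper's $A_n+B_n+C_n$), the diffusion part recycled from \cref{theorem_decay1} with the $\kappa/2$ prefactor, and the leftover partial sums $\sum_{i\le m+1}\Phi_i$ and $(N-m)-\sum_i\Phi_i$ driven to their equilibrium values by the monotone (Cauchy) sequence argument, split according to the sign of $V'$, before iterating the resulting inequality $(1+\kappa\tau)\|\Phi^{k+1}-\Phi^*\|^2\le\|\Phi^k-\Phi^*\|^2+O(\tau h)$. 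The only substantive difference is local: to show the drift term is additive $O(h)$ rather than multiplicative, you drop one block of $I_3$ by a sign argument pairing $V'$ with $\Phi^{k+1}_i-\Phi^*_i$, whereas the paper bounds $|V'|\le M_1$ throughout and instead uses the concavity estimate $\Phi(1-\Phi)\le\Phi^*(1-\Phi^*)+(\Phi-\Phi^*)(1-2\Phi^*)$ to reduce to the same partial sums — both routes land on the identical final estimate.
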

\begin{proof}
First of all, \cref{lemma_step2} implies that the sequence given by \eqref{phistar}-\eqref{am} is an equilibrium of \eqref{Euler}.  With $|V'(\cdot)|\leq M_1$ in $[0,1]$ for some constant $M_1>0$, the same calculations in \eqref{L2} lead to
\begin{align*}
A_n =&\,-\frac{h\kappa}{2}\sum_{i=1}^{N-1} (\Phi_i)^2(1-\Phi_i)\left( \frac{1}{\Phi_{i+1}-\Phi_i}- \frac{1}{\Phi_{i}-\Phi_{i-1}} \right)
   -\frac{h\kappa}{2}\sum_{i=1}^{N-1}\Phi_i(1-2\Phi_i)-h\sum_{i=1}^{N-1}(\Phi_i)^2(1-\Phi_i)V'(\Phi_i) \\
   \leq&\, \frac{h\kappa}{2}\sum_{i=1}^{N-1}  \Phi_{i}  -\frac{h\kappa}{2}\sum_{i=1}^{N-1}  (\Phi_{i})^2+M_1h\sum_{i=1}^{N-1}(\Phi_i)^2(1-\Phi_i),
\end{align*}

\begin{align*}
  B_n =&\,\frac{ah\kappa}{2}\Phi_{m+1}(1-\Phi_{m+1})\left(\frac{1}{\Phi_{m+2}-\Phi_{m+1}}- \frac{1}{\Phi_{m+1}-\Phi_{m}} \right)
  +\frac{ah\kappa}{2}(1-2\Phi_{m+1})+ah\Phi_{m+1}(1-\Phi_{m+1})V'(\Phi_{m+1}) \\
   \leq&\, \frac{ah\kappa}{2} \frac{\Phi_{m+1}(1-\Phi_{m+1})}{\Phi_{m+2}-\Phi_{m+1}}+\frac{ah\kappa}{2}(1-2\Phi_{m+1})+\frac{M_1ah}{4},
\end{align*}
and
 \begin{align*}
   C_n =&\,\frac{h\kappa}{2} \sum_{i=m+2}^{N-1} \Phi_i(1-\Phi_i)\left(\frac{1}{\Phi_{i+1}-\Phi_i}- \frac{1}{\Phi_{i}-\Phi_{i-1}} \right)
   +\frac{h\kappa}{2}\sum_{i=m+2}^{N-1}(1-2\Phi_i)+h \sum_{i=m+2}^{N-1}\Phi_i(1-\Phi_i)V'(\Phi_i) \\
    \leq&\,-\frac{ah\kappa}{2}\frac{\Phi_{m+1}(1-\Phi_{m+2})}{\Phi_{m+2}-\Phi_{m+1}}+M_1h \sum_{i=m+2}^{N-1}\Phi_i(1-\Phi_i).
 \end{align*}
Therefore, one finds
\begin{align}
   &\frac{1}{2\tau} \left(\|\Phi^{k+1}-\Phi^* \|^2 - \right. \left.\|\Phi^{k}-\Phi^* \|^2 \right) \notag\\
  \leq&\, \frac{h\kappa}{2}\sum_{i=1}^{N-1}  (\Phi_{i}  - \Phi_{i}^2)+\frac{ah\kappa}{2}(1-\Phi_{m+1})+M_1h\sum_{i=1}^{N-1}\Phi_i^2(1-\Phi_i)+\frac{M_1ah}{4}+M_1h \sum_{i=m+2}^{N-1}\Phi_i(1-\Phi_i)\notag\\
  =&-\frac{\kappa h}{2} \left[ \sum_{i=1}^{m} \Phi_i^2 + (\Phi_{m+1}-a)^2 +\sum_{i=m+2}^{N-1} ( \Phi_i-1)^2 \right]+\frac{h\kappa}{2}\sum_{i=1}^{m+1}  \Phi_{i}-\frac{h\kappa}{2}\sum_{i=m+2}^{N-1}  \Phi_{i}-\frac{3ah\kappa}{2} \Phi_{m+1}\notag\\
  &+\frac{(N+a+a^2-m-2)h\kappa}{2}+M_1h\sum_{i=1}^{N-1}(\Phi_i)^2(1-\Phi_i)+\frac{M_1ah}{4}+M_1h \sum_{i=m+2}^{N-1}\Phi_i(1-\Phi_i)\notag
\end{align}
\begin{align}  
  \leq & -\frac{\kappa}{2}\|\Phi^{k+1}-\Phi^*\|^2 +h\kappa \sum_{i=1}^{m+1}  \Phi_{i}+\frac{h\kappa}{2} \left((N-m)-\sum_{i=1}^{N-1}  \Phi_{i}  \right)\label{l2phi}\\
  &+M_1h\sum_{i=1}^{N-1}\Phi_i (1-\Phi_i)+\frac{M_1h}{4}+M_1h \sum_{i=m+2}^{N-1}\Phi_i(1-\Phi_i). \notag
\end{align}
In light of the concavity of $x(1-x)$, we can obtain from Taylor's expansion that
$$\Phi^{k+1}_i (1-\Phi^{k+1}_i)\leq \Phi_i^* (1-\Phi^*_i) +(\Phi^{k+1}_i -\Phi^*_i)  (1-2\Phi^*_i).$$
Substituting the inequality into \eqref{l2phi} yields
\begin{align}\label{l2decay}
     \frac{1}{2\tau} \left(\|\Phi^{k+1}-\Phi^* \|^2 - \|\Phi^{k}-\Phi^* \|^2 \right)
   \leq  &\,-\frac{\kappa}{2}\|\Phi^{k+1}-\Phi^*\|^2 +h(\kappa+3 M_1) \sum_{i=1}^{m+1}  \Phi_{i}\nonumber\\ &+h(\frac{\kappa}{2}+2M_1) \left((N-m)-\sum_{i=1}^{N-1}  \Phi_{i}  \right)+\frac{M_1h}{4}. 
\end{align}
Now, let us denote $d_{k+1}:=\sum_{i=1}^{N-1}\Phi^{k+1}_i$, then summing equation \eqref{Euler} with respect to $i$ yields
\begin{align}\label{dk}
  d_{k+1}-d_k &=-\tau\sum_{i=1}^{N-1}\Phi_i^{k+1}(1-\Phi_i^{k+1})V'(\Phi_i^{k+1}) \nonumber.
\end{align}

We next show the convergences of $\sum_{i=1}^{N-1}\Phi^{k+1}_i$ and $\sum_{i=1}^{m+1}\Phi^{k+1}_i$ provided that $V'(\cdot)$ is of one sign.  Case i). If $V'(\cdot)>0$.  Let us denote for simplicity that $c_{k+1}^1:=\sum_{i=1}^{m+1}\Phi^{k+1}_i$, then summing equation \eqref{Euler} with respect to $i$ yields
    \begin{align*}
  c_{k+1}^1-c_k^1 &=-\frac{\kappa \tau}{2} \frac{\Phi_{m+1}^{k+1}(1-\Phi_{m+2}^{k+1})}{\Phi^{k+1}_{m+2}-\Phi_{m+1}^{k+1}}-\tau\sum_{i=1}^{m+1}\Phi_i^{k+1}(1-\Phi_i^{k+1})V'(\Phi_i^{k+1})<0. 
\end{align*}
The fact that $0\leq c_{k}^1 \leq m+1$ for each $k\in \mathbb{N}$ implies that $\{c_k^1\}$ is Cauchy with the limit $a$.  Similarly, since $0\leq d_{k} \leq N-1$ and $d_{k+1}-d_k<0$, one can show that $\sum_{i=1}^{N-1}\Phi^{k+1}_i$ is Cauchy and its limit is $(N+a-m-2)$.
Case ii).  If $V'(\cdot)<0$.  Then we denote $c_{k+1}^2:=\sum_{i=m+2}^{N-1}\Phi^{k+1}_i=d_{k+1}-c_{k+1}^1$, then summing equation \eqref{Euler} with respect to $i$ yields
\begin{align*}
  c_{k+1}^2-c_k^2 &=\frac{\kappa \tau}{2} \frac{\Phi_{m+1}^{k+1}(1-\Phi_{m+2}^{k+1})}{\Phi^{k+1}_{m+2}-\Phi_{m+1}^{k+1}}-\tau\sum_{i=m+2}^{N-1}\Phi_i^{k+1}(1-\Phi_i^{k+1})V'(\Phi_i^{k+1})>0. 
\end{align*}
The fact that $0\leq c_{k}^2 \leq N-m-2$ for each $k\in \mathbb{N}$ implies that $\{c_k^2\}$ is Cauchy with the limit $(N-m-2)$.  Similarly, since $0\leq d_{k} \leq N-1$ and $d_{k+1}-d_k>0$, one can show that $\sum_{i=1}^{N-1}\Phi^{k+1}_i$ is Cauchy and its limit is $(N+a-m-2)$, then $\{c_k^1\}$ is Cauchy with the limit $a$.

Since $c_k^1\to a$ and $d_k\to (N+a-m-2)$ with $0\leq a\leq 1$, there exists some $k_1 \in\mathbb{N}$ such that for any $k>k_1$
\[\sum_{i=1}^{m+1}\Phi^{k+1}_i \leq \frac{5}{4},\quad  \sum_{i=1}^{N-1}\Phi^{k+1}_i \geq N-m-\frac{5}{2}.\]
Therefore from \eqref{l2decay} we deduce that $\frac{1}{2\tau} \left(\|\Phi^{k+1}-\Phi^* \|^2 - \|\Phi^{k}-\Phi^* \|^2 \right)\leq -\frac{\kappa}{2}\|\Phi^{k+1}-\Phi^* \|^2+(\frac{5 \kappa}{2} +9 M_1)h$, and then
\begin{align*}
  \|\Phi^k-\Phi^*\|^2 \leq& \left(\frac{1}{1+\kappa \tau}\right)^k \|\Phi^0-\Phi^*\|^2  +(5 +\frac{18 M_1}{\kappa})  \left[ 1-\left(\frac{1}{1+\kappa\tau}\right)^k \right] h\\
   =& \left(\|\Phi^0-\Phi^*\|^2-(5 +\frac{18 M_1}{\kappa}) h \right) \left(\frac{1}{1+\kappa \tau}\right)^k+(5 +\frac{18 M_1}{\kappa}) h.
\end{align*}
Choose $h\leq \|\Phi^0-\Phi^*\|^2/(5 +\frac{18 M_1}{\kappa}) $, then for $\tau\approx0$ we have $\left(\frac{1}{1+\kappa \tau}\right)^k \approx \exp (-\kappa k\tau)\approx \exp(-\kappa t)$.
\end{proof}

\section{Numerical Studies}
We now present several sets of numerical simulations to illustrate and verify the analytical properties of the proposed scheme and to demonstrate further spatio-temporal dynamics within the generic drift which are not captured by the analysis.  Note that the time-dependent solution converges to the Dirac-delta function in the long-time limit. In our map variables, this corresponds to the convergence to a stationary map $\Phi_\infty(\eta)$ given by a step function. To deal with the issue that discrete values near the boundary are too close to be distinguished from each other under machine precision, we set the following additional boundary criteria throughout our simulations
\begin{equation}\label{bdc}
\Phi_i^k=\left\{
\begin{array}{ll}
0,& \text{for} \; 0<\Phi_i^k<10^{-10}, \\
1,&\text{for} \; 1-10^{-10}<\Phi_i^k<1.\\
\end{array}
\right.
\end{equation}

\subsection{Diffusion Equation}
We first numerically study the purely diffusive model \eqref{diff}.  The initial data are chosen to be either the symmetric $f_0(x)=\frac{\pi }{\pi-2}(1-\sin(\pi x))$ or asymmetric $f_0(x)=2 x$ so we test the robustness of the proposed scheme with respect to initial data regarding the long-time dynamics. We remind the reader that the conserved quantity \eqref{conser} determines asymptotically the proportion of the mass allocated to each end of the interval, that is the jump location in mass variables, see Theorem \ref{theorem21}.  To study the convergence of the numerical scheme, we consider the discrete $L^1$ error for $\Phi_{i,h}^{k,\tau}$, $i=0,1,...,N$ as follows 
\begin{equation*}
    E_h^{\tau}(n\tau)=h\sum_{i=0}^N\left| \Phi_{i,h}^{k,\tau} -\Phi_{i}^{k,\text{ref}}\right|,
\end{equation*}
where the reference solution $\Phi_{i}^{k,\text{ref}}$ is obtained with mesh size $h=1/1600$ and time increment $\tau=0.0005$. Notice that this quantity is equivalent to an approximation of the Monge-Kantorovich or 1-Wasserstein transport distance between the associated densities. Moreover, the experimental order of convergence (EOC) in spatial-discretization $h$ is defined as 
\begin{equation*}
\text{EOC}_h:=\log_2(E_h)-\log_2(E_{h/2}),
\end{equation*}
and the experimental order of  convergence (EOC) in time-discretization $\tau$ is defined as 
\begin{equation*}
\text{EOC}_{\tau}:=\log_2(E^{\tau})-\log_2(E^{\tau/2}).
\end{equation*}
Table \ref{tablemesh} presents the errors and EOCs in spatial-discretization computed at a final time $T=0.992$, a multiple of $\tau=0.016$, with constant time increment $\tau=0.0005$.  We can see that difference errors decrease as the mesh is refined and a finer spatial-discretization results in higher accuracy. Besides, the orders of convergence in spatial-discretization indicate a convergence order of greater than one in both cases, and EOCs increase for a finer mesh.  The results from both symmetric and asymmetric initial data indicate that the proposed scheme is robust to initial data.  Table \ref{tabletime} shows the errors and EOCs in time-discretization with a fixed space mesh $h=1/1600$.  Similar robust convergence witnesses that the method converges as the time mesh shrinks. 
 
\renewcommand\arraystretch{1.1}
\begin{table}[H]
\centering
\caption{Robust convergence in spatial-discretization $h$ with $\tau=0.0005$ up to a terminal time $T=0.992$.}
\setlength{\tabcolsep}{5mm}
\begin{tabular}{c|cccccccccc}
\hline
                              & \multicolumn{2}{c}{$f_0(x)=\frac{\pi }{\pi-2}(1-\sin(\pi x))$}                 & \multicolumn{2}{c}{$f_0(x)=2x$} \\ \hline
\multicolumn{1}{c|}{$h$}      & Error  $E_h^{\tau}$   & \multicolumn{1}{c|}{$\text{EOC}_h$}   & Error  $E_h^{\tau}$       & $\text{EOC}_h$    \\ \hline
1/50                   & 1.021e-02                 & \multicolumn{1}{c|}{}      & 1.061e-02                 &                         \\
1/100                  & 5.337e-03                 & \multicolumn{1}{c|}{0.936} & 5.344e-03                 & 0.989                   \\
1/200                  & 2.597e-03                 & \multicolumn{1}{c|}{1.039} & 2.597e-03                 & 1.041                   \\
1/400                  & 1.157e-03                 & \multicolumn{1}{c|}{1.167} & 1.157e-03                 & 1.166                   \\
1/800                  & 4.010e-04                 & \multicolumn{1}{c|}{1.529} & 4.026e-04                 & 1.523                   \\ \hline
\end{tabular}
\label{tablemesh}
\end{table}

\renewcommand\arraystretch{1.1}
\begin{table}[H]
\centering
\caption{Robust convergence in time-discretization $\tau$ with $h=1/1600$ up to a terminal time $T=0.992$}.
\setlength{\tabcolsep}{5mm}
\begin{tabular}{c|cccccccccc}
\hline
                              & \multicolumn{2}{c}{$f_0(x)=\frac{\pi }{\pi-2}(1-\sin(\pi x))$}                 & \multicolumn{2}{c}{$f_0(x)=2x$} \\ \hline
\multicolumn{1}{c|}{${\tau}$}      & Error  $E_h^{\tau}$   & \multicolumn{1}{c|}{$\text{EOC}_{\tau}$}   & Error  $E_h^{\tau}$       & $\text{EOC}_{\tau}$    \\ \hline
0.016                  & 4.191e-02                 & \multicolumn{1}{c|}{}      & 4.443e-02                 &                         \\
0.008                  & 3.058e-02                 & \multicolumn{1}{c|}{0.455} & 3.221e-02                 & 0.464                   \\
0.004                  & 2.000e-02                 & \multicolumn{1}{c|}{0.612} & 2.139e-02                 & 0.590                   \\
0.002                  & 1.042e-02                 & \multicolumn{1}{c|}{0.940} & 1.160e-02                 & 0.884                   \\
0.001                  & 3.604e-03                 & \multicolumn{1}{c|}{1.532} & 4.295e-03                 & 1.433                   \\ \hline
\end{tabular}
\label{tabletime}
\end{table}

\begin{figure}[htb]
\centering
\captionsetup{width=.85\linewidth}
\subfloat[$f_0(x)=\frac{\pi }{\pi-2}(1-\sin(\pi x))$]{\includegraphics[width=0.45\columnwidth]{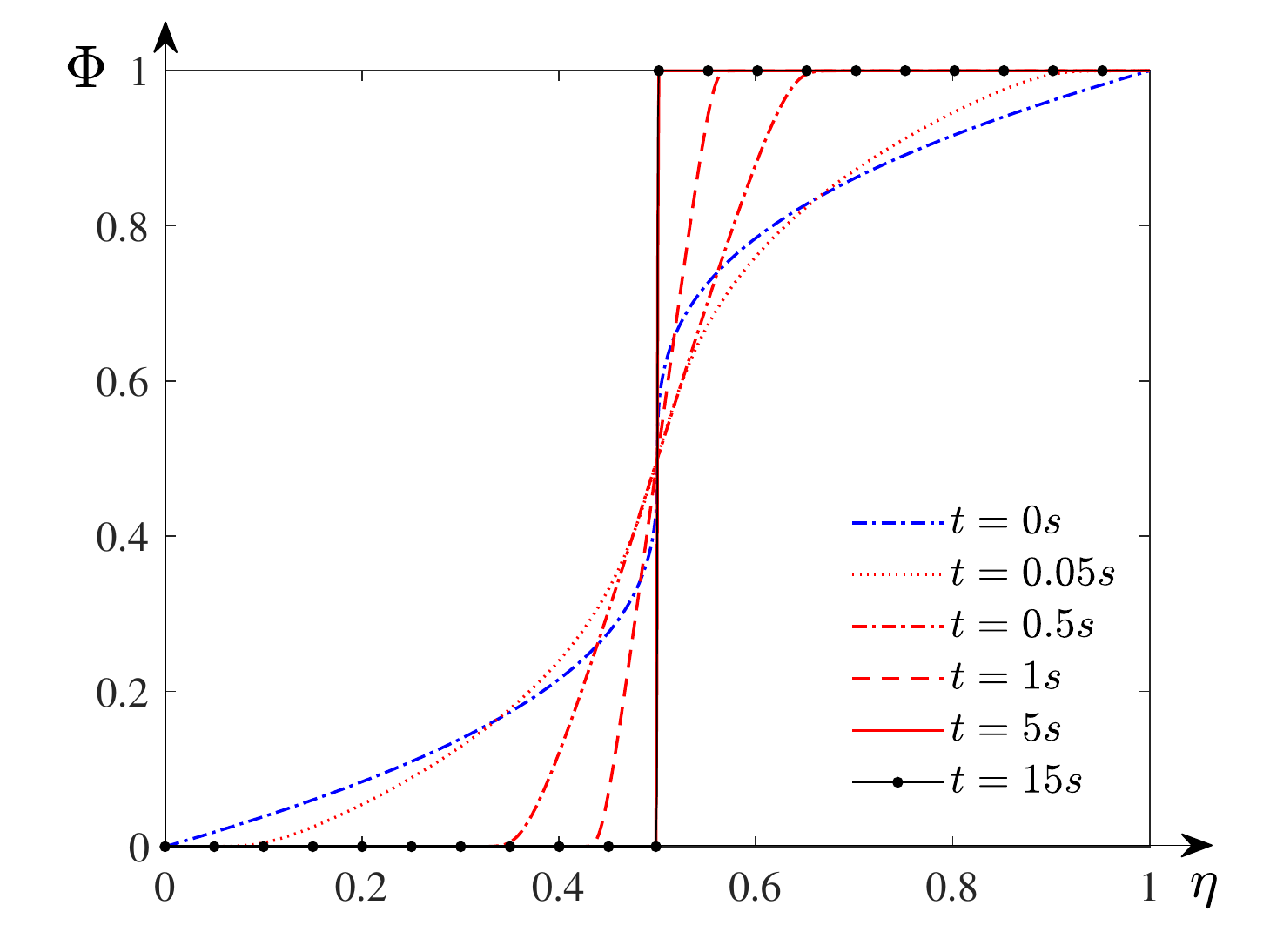}}\hspace{-5mm}
\subfloat[$f_0(x)=2 x$]{\includegraphics[width=0.45\columnwidth]{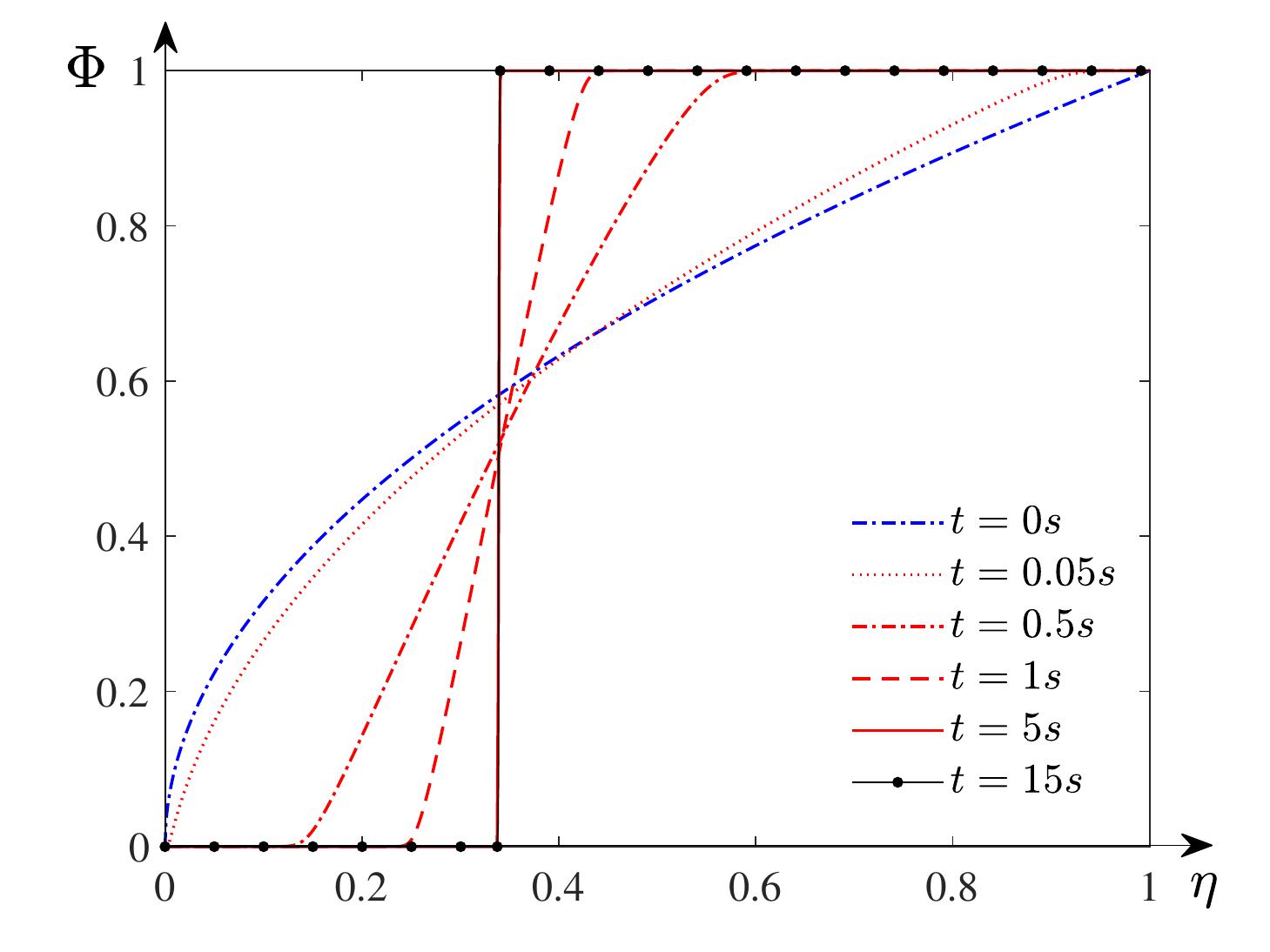}}
\caption{Evolution of the inverse cumulative distribution function $\Phi$ for the Diffusion-equation \eqref{diff} subject to symmetric initial data on the left and asymmetric initial data on the right.  Both experiments capture the dominance of dynamics by the Dirac-delta as stated in Theorem \ref{theorem21}. In particular, one finds that the degeneracy of diffusion at the endpoints makes the spatio-temporal dynamics of (\ref{diff}) relatively simple in light of the proposed mass transport method now that the singularities can be quantified.}\label{fig:inverse}
\end{figure}

Next we present the evolution of \eqref{diff} out of the initial data $f_0(x)=\frac{\pi -2}{\pi}(1-\sin(\pi x))$ and $f_0(x)=2 x$ with the step size $h=1/999$ and time step $\tau=0.001$. Figure \ref{fig:inverse} captures the convergence to the steady-state of the numerical scheme given by a Heaviside step function, with at most one intermediate value, as stated in Lemma \ref{lemma_step1} or Lemma \ref{theorem21}.  We note that the jump locations $\eta_0=\frac{1}{2}$ and $\frac{1}{3}$ for these two initial data, respectively, and this, together with the conservation of center of mass, are well preserved by our method. 

\begin{figure}[htb]
\centering
\captionsetup{width=.85\linewidth}
  \begin{subfigure}{.45\textwidth}
    \includegraphics[width = \linewidth]{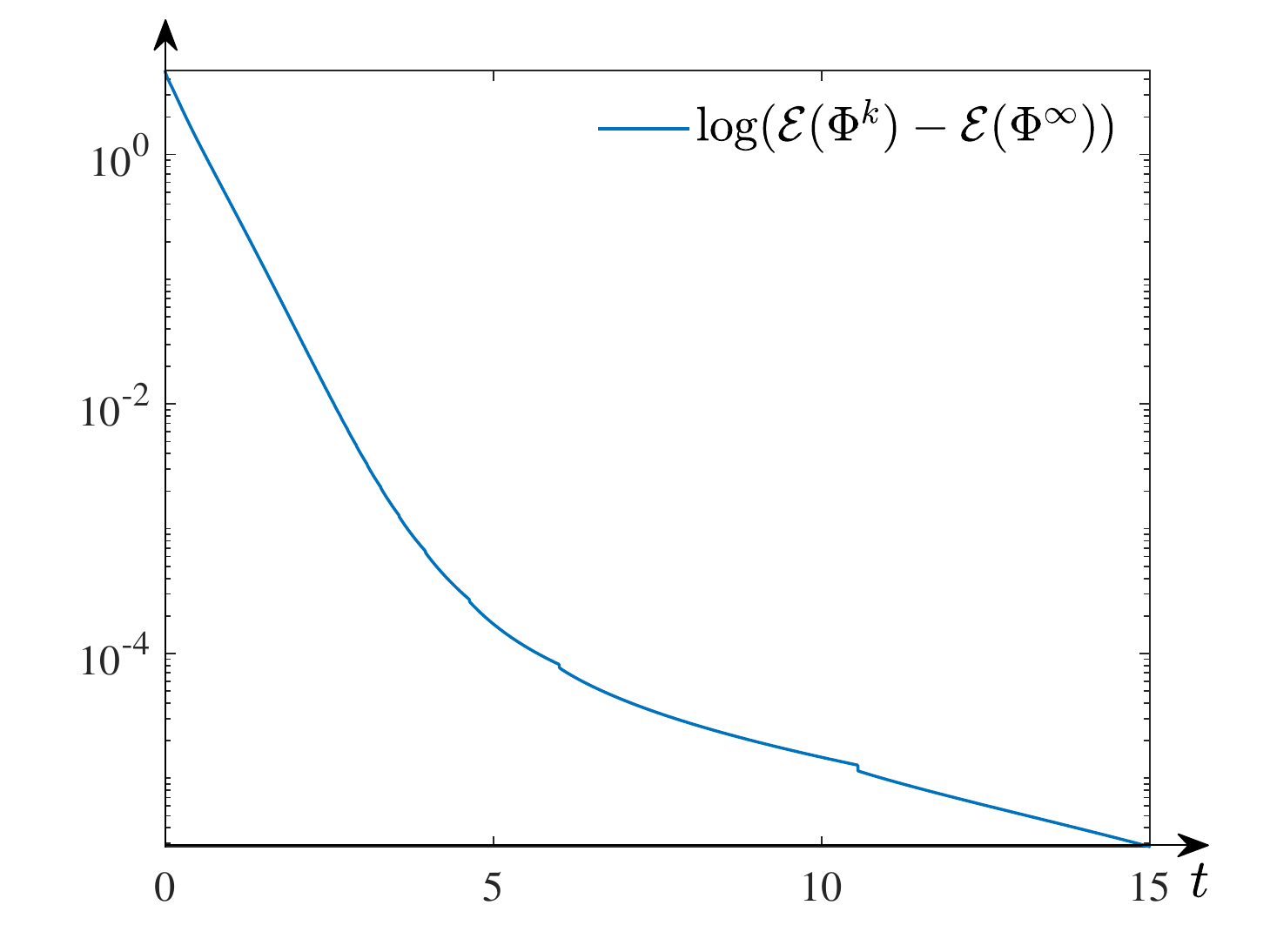}
  \end{subfigure}%
  \begin{subfigure}{.45\textwidth}
    \centering
    \includegraphics[width = \linewidth]{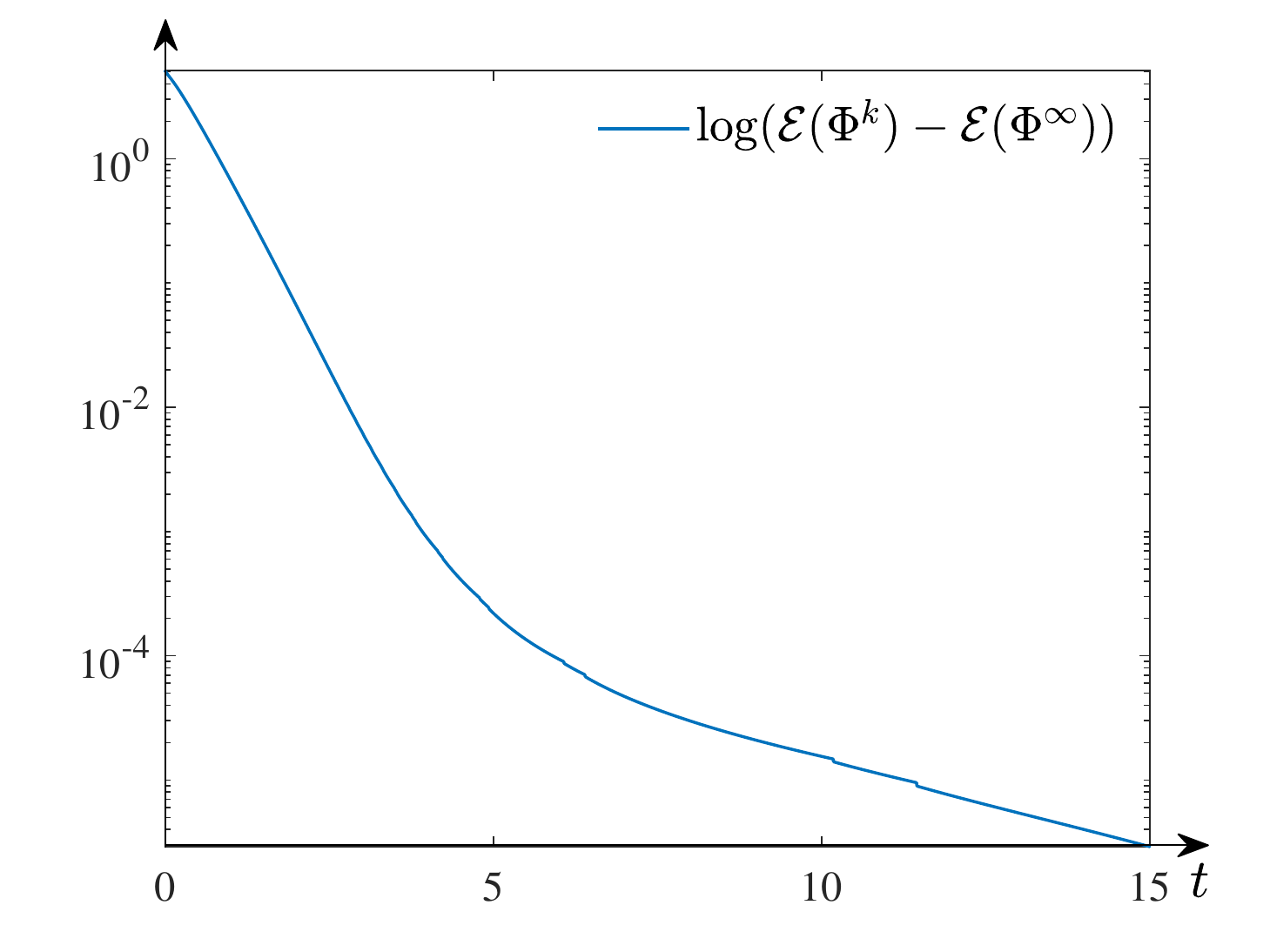}
  \end{subfigure}
  \\
  \begin{subfigure}{.45\textwidth}
    \centering
    \includegraphics[width = \linewidth]{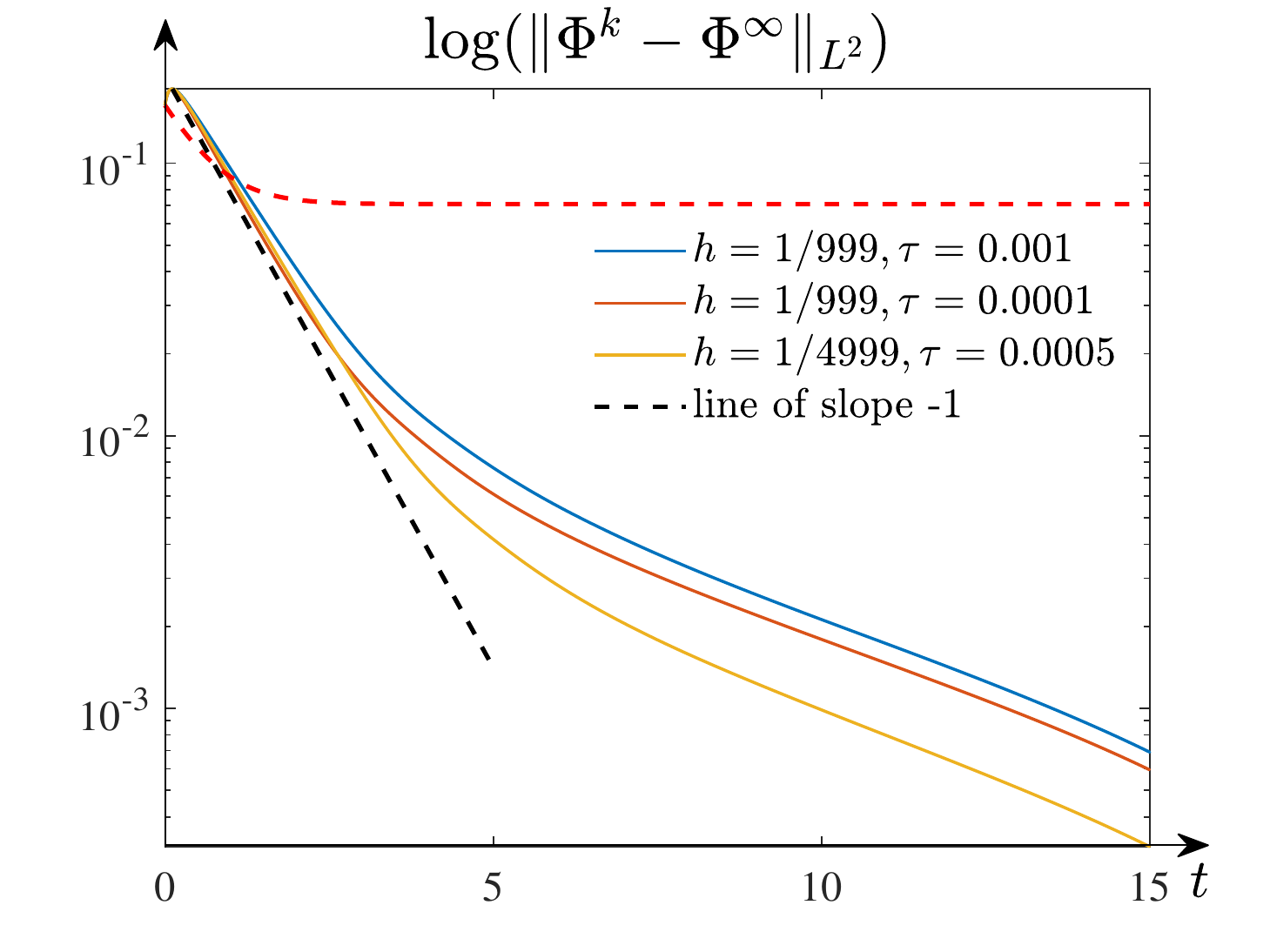}
    \caption{$f_0(x)=\frac{\pi }{\pi-2}(1-\sin(\pi x))$}
  \end{subfigure}%
  \begin{subfigure}{.45\textwidth}
    \centering
    \includegraphics[width = \linewidth]{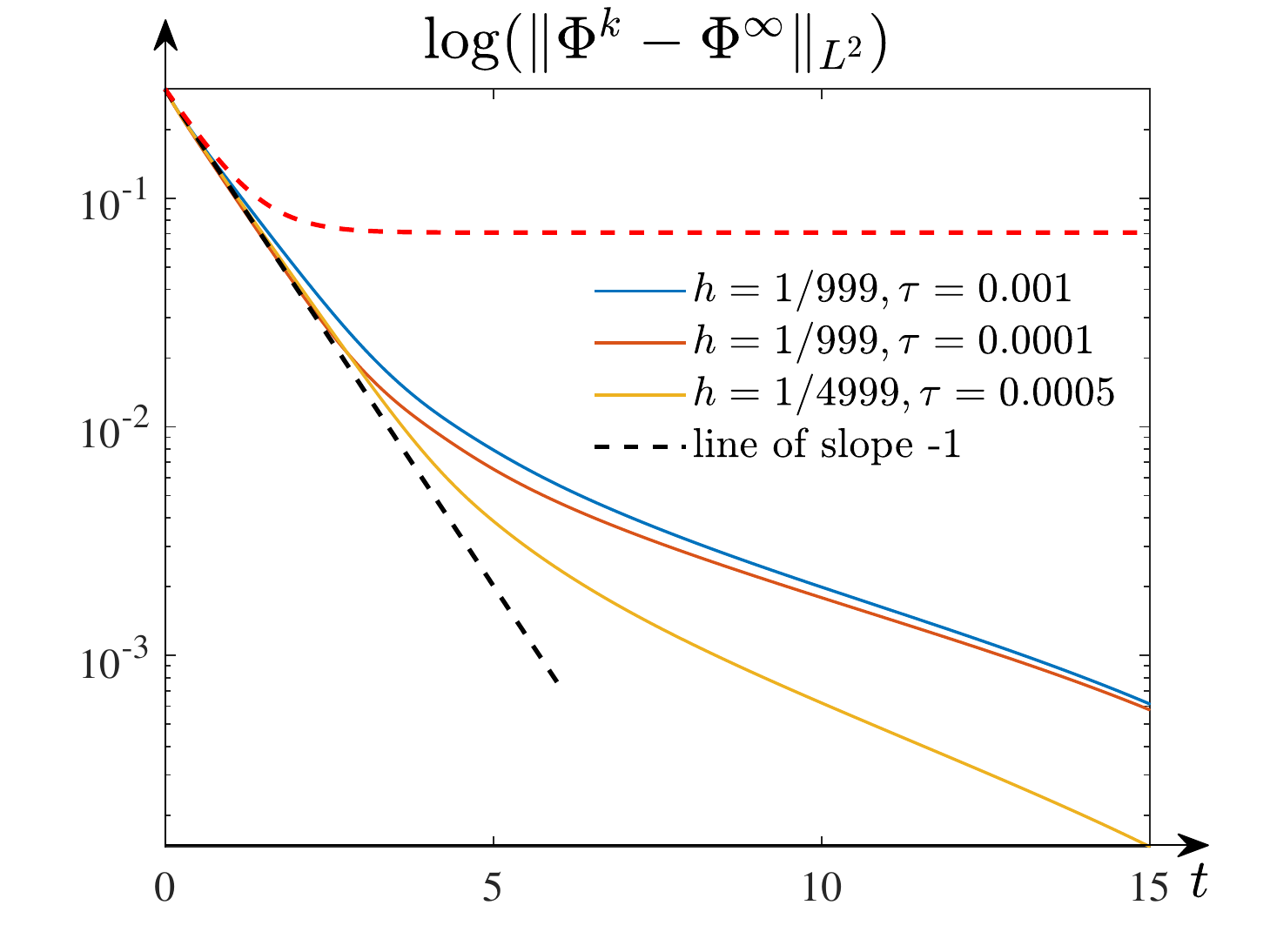}
    \caption{$f_0(x)=2x$}
  \end{subfigure}
  \caption{Convergence of the energy and error for diffusion equation \eqref{diff}.  \textbf{Top}: Dissipation of the discrete energy $\mathcal{E}(\Phi)$ in \eqref{discreteenergy} to that of the (proxy) steady state in logarithmic scale.  \textbf{Bottom}: $L^2$-distance between the inverse distribution function and the numerically computed steady state (as a proxy) in logarithmic scale.  One finds that the (exponential) decay rate of the discrete scheme approaches 1 as space or time mesh shrinks.  These findings agree well with Theorem \ref{theorem_decay1}.}
 \label{fig:energy}
\end{figure}

Figure \ref{fig:energy} presents two additional sets of biologically relevant and computationally favored properties of the scheme.  For instance, the exponential dissipation of energy \eqref{discreteenergy} in time in Figure \ref{fig:energy}-(\textbf{Top}) readily indicates that the Euler implicit scheme \eqref{Euler} carries that from the convex splitting scheme \eqref{full} and the continuum equation. The quantities in Figure \ref{fig:energy} showing the decay towards steady states are computed by choosing the solution of our scheme for a longer time as the numerically computed (proxy) stationary solution. This is essential since the energy of a Heaviside steady state diverges to infinity in both the continuous and discrete equation according to our discussion in Lemma \ref{lemma35} and the subsequent remarks.  The evolution of the $L^2$-norm of the difference between the numerical solution and the numerically computed steady state is plotted in Figure \ref{fig:energy}-(\textbf{Bottom}). Notice that this quantity is equivalent to the 2-Wassertein transport distance between the associated densities. We can find that the convergence is faster than the rate obtained in Theorem \ref{theorem_decay1} -- exponential convergence in time modulo $O(h)$ terms.  This confirms the derivation of Theorem \ref{theorem_decay1} and the existence of time modulo $O(h)$ terms. We can also see that the convergence rates depend on $\tau$ and $h$, and as either one shrinks, the convergence of the $L^2$-norm fits better $e^{-t}$ initially, which is consistent with the theoretical analysis, however, the decay rate is dramatically influenced by the $O(h)$-error in Theorem \ref{theorem_decay1} as time evolves.

\subsection{Replicator-Diffusion Equation}
We proceed to study the full Replicator-Diffusion equation \eqref{driftdiff} that includes the forward Kimura equation with frequency selection.  In particular, we restrict our attention to the choice that $V'(x)=\alpha x+\beta$ for some constant $\alpha$ and $\beta$.  Throughout the simulations we fix $f_0(x)=6x(1-x)$ with the space mesh size $h=1/999$ and time step size $\tau=10^{-3}$, and then study the variation of the diffusion rate and fitness potential, starting with $\kappa=2$, $V'(x)=4x+2$ and $\kappa=4$, $V'(x)=-4x+2$ as examples.   

One observes from Figure \ref{figv}-(a) that the numerical solution converges to the step function with the discrete conservation laws holding. As shown in Figure \ref{figv}-(b), there exist some jumps in the dissipation of the entropy, which can be attributed to the additional boundary criteria \eqref{bdc}. One can observe the smearing of these jumps by decreasing the tolerance in \eqref{bdc}.  However, this tolerance value cannot be chosen too small either for Newton's iterations to converge. Finally, Figure \ref{figv}-(c) shows the decay in the $L^2$-distance or 2-Wassertein transport distance towards equilibration. One again finds that the (exponential) decay rate of the discrete scheme approaches 1 as the space or time mesh shrinks for short times. This finding agrees well with Theorem \ref{theorem_decay2}. We also observe in Figure \ref{figv}-(\textbf{Top}) that the asymmetric initial data leads to an interesting phenomena of different equilibration time scales, a first transient slower time scale in which all the mass from the left is pushed towards 0 followed by a faster decay towards the equilibrium for large times. 

\begin{figure}[htb]
  \centering
\captionsetup{width=.85\linewidth}
  \begin{subfigure}{.327\textwidth}
    \centering
    \includegraphics[width =\linewidth]{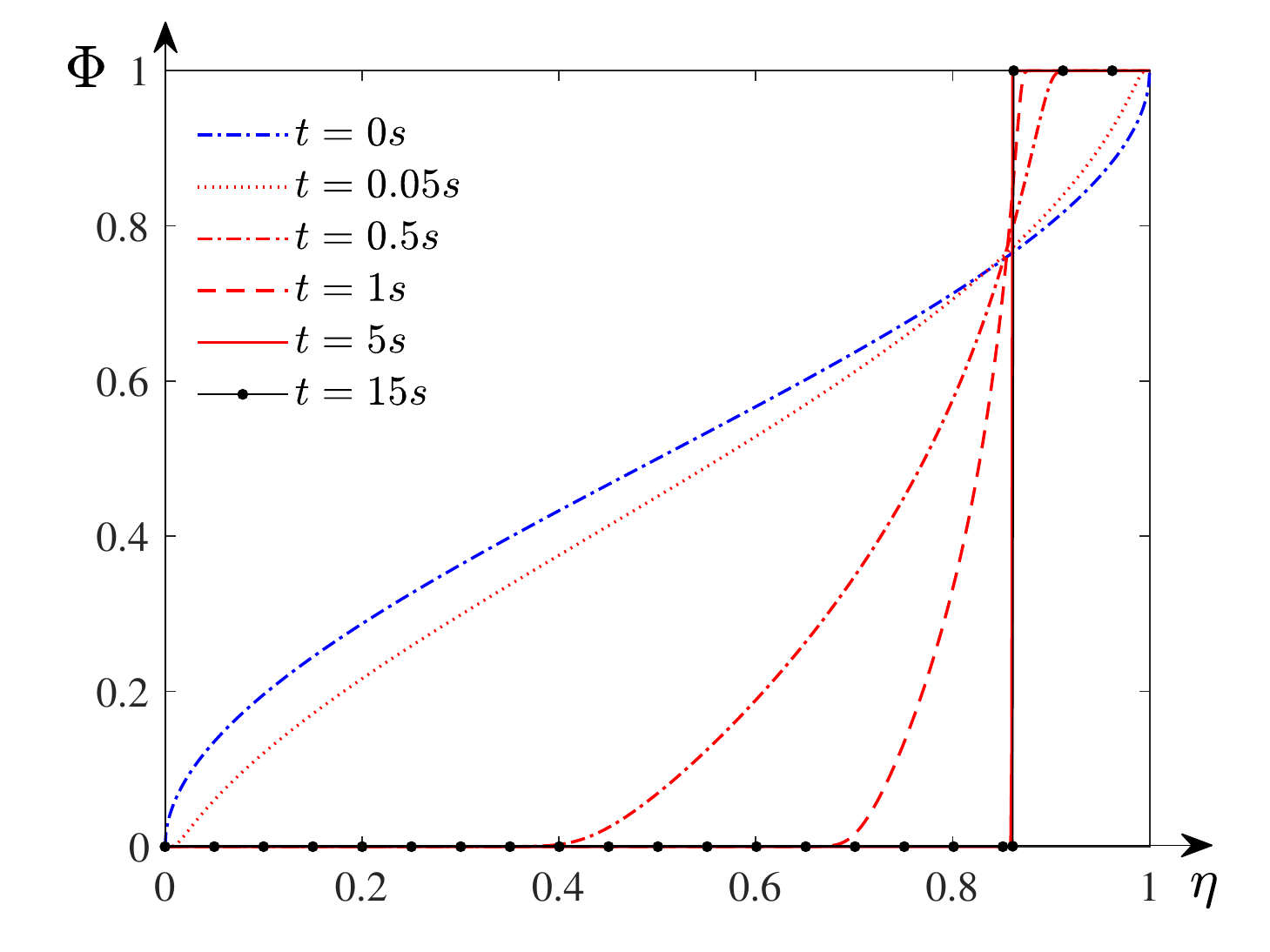}
  \end{subfigure}
  \hspace{0.01in}
  \begin{subfigure}{.327\textwidth}
    \centering
    \includegraphics[width = \linewidth]{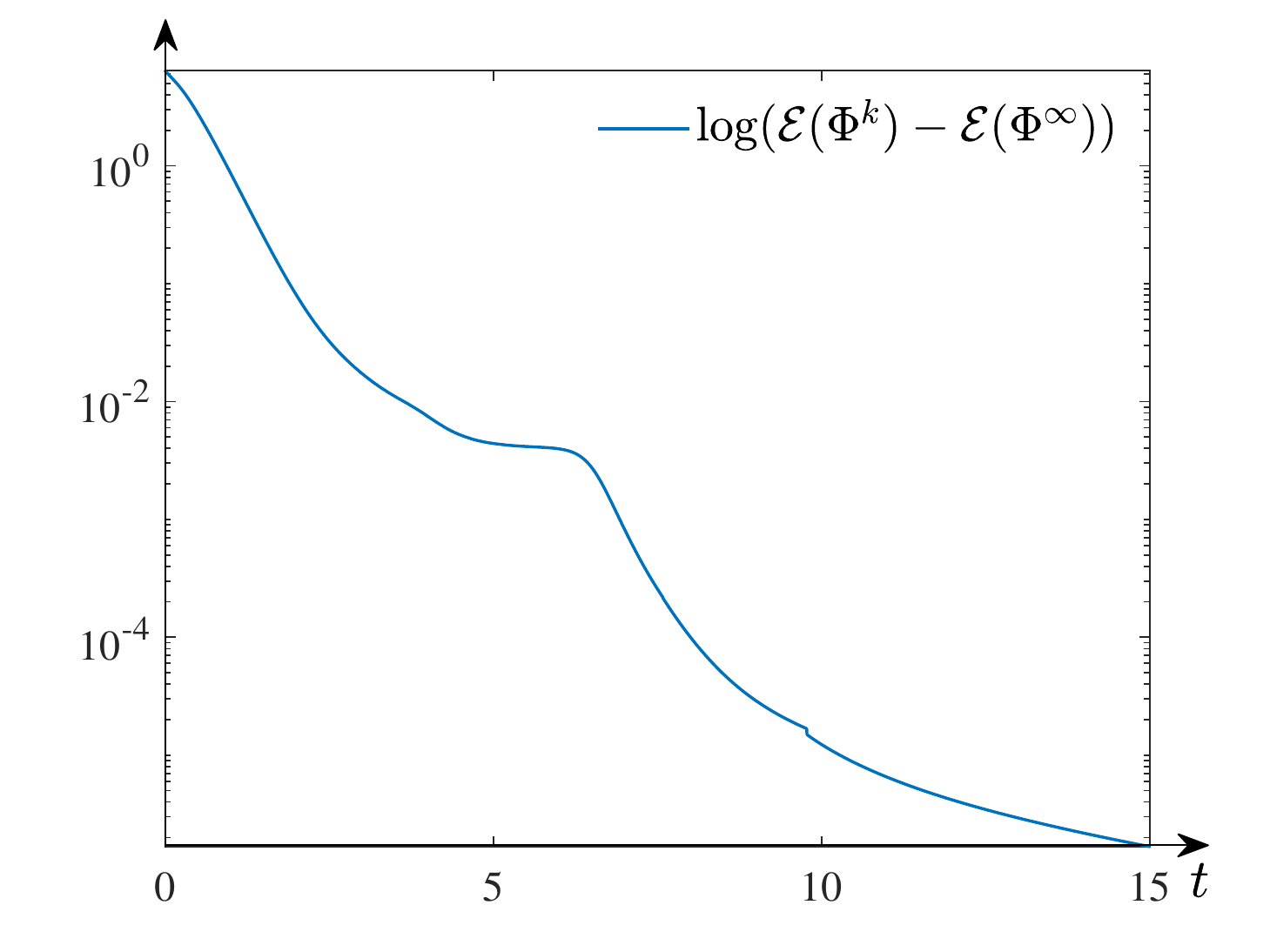}
  \end{subfigure}%
  \hspace{0.01in}
  \begin{subfigure}{.327\textwidth}
    \centering
    \includegraphics[width = \linewidth]{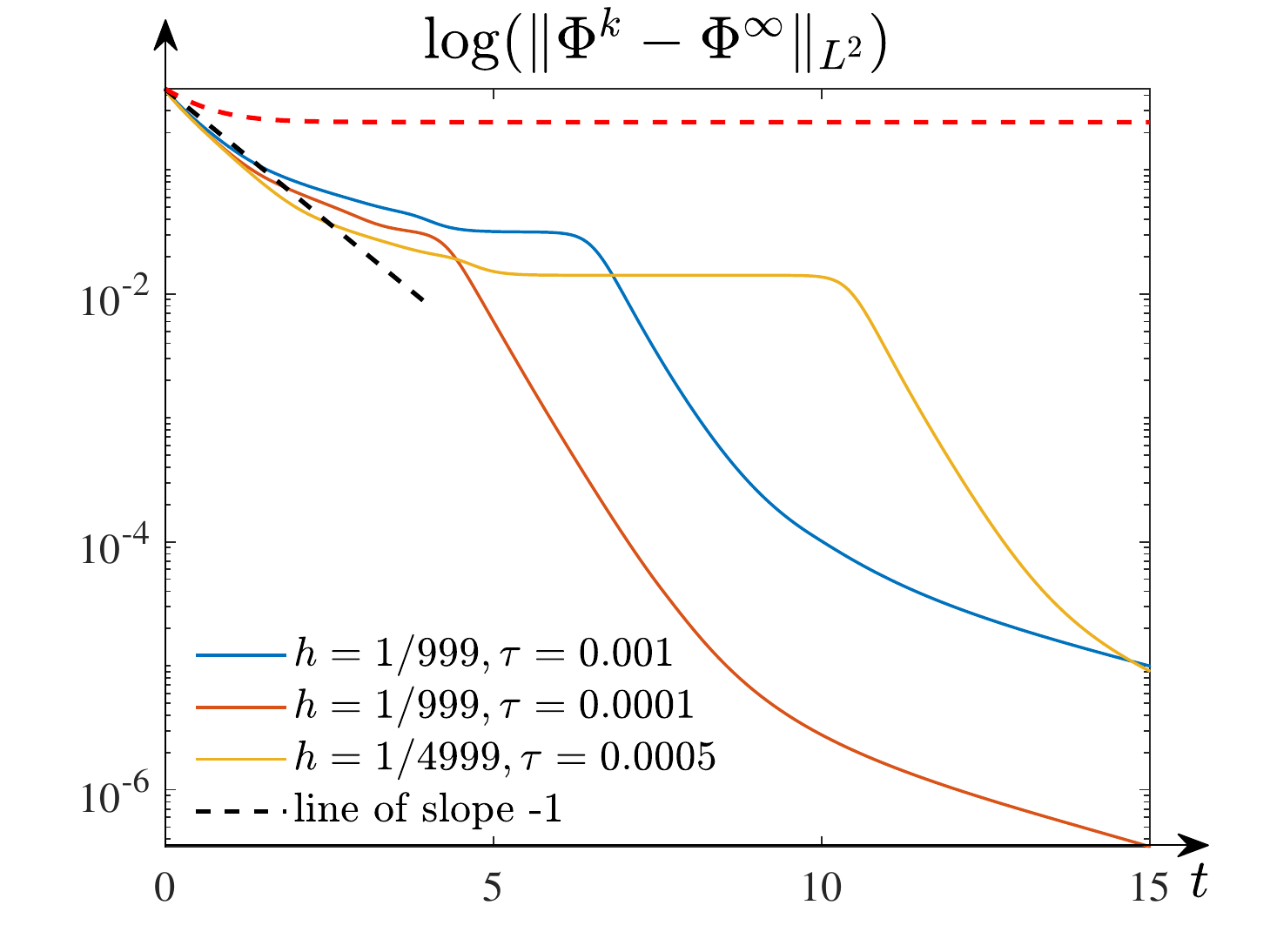}
  \end{subfigure}
  \begin{subfigure}{.327\textwidth}
    \centering
    \includegraphics[width =\linewidth]{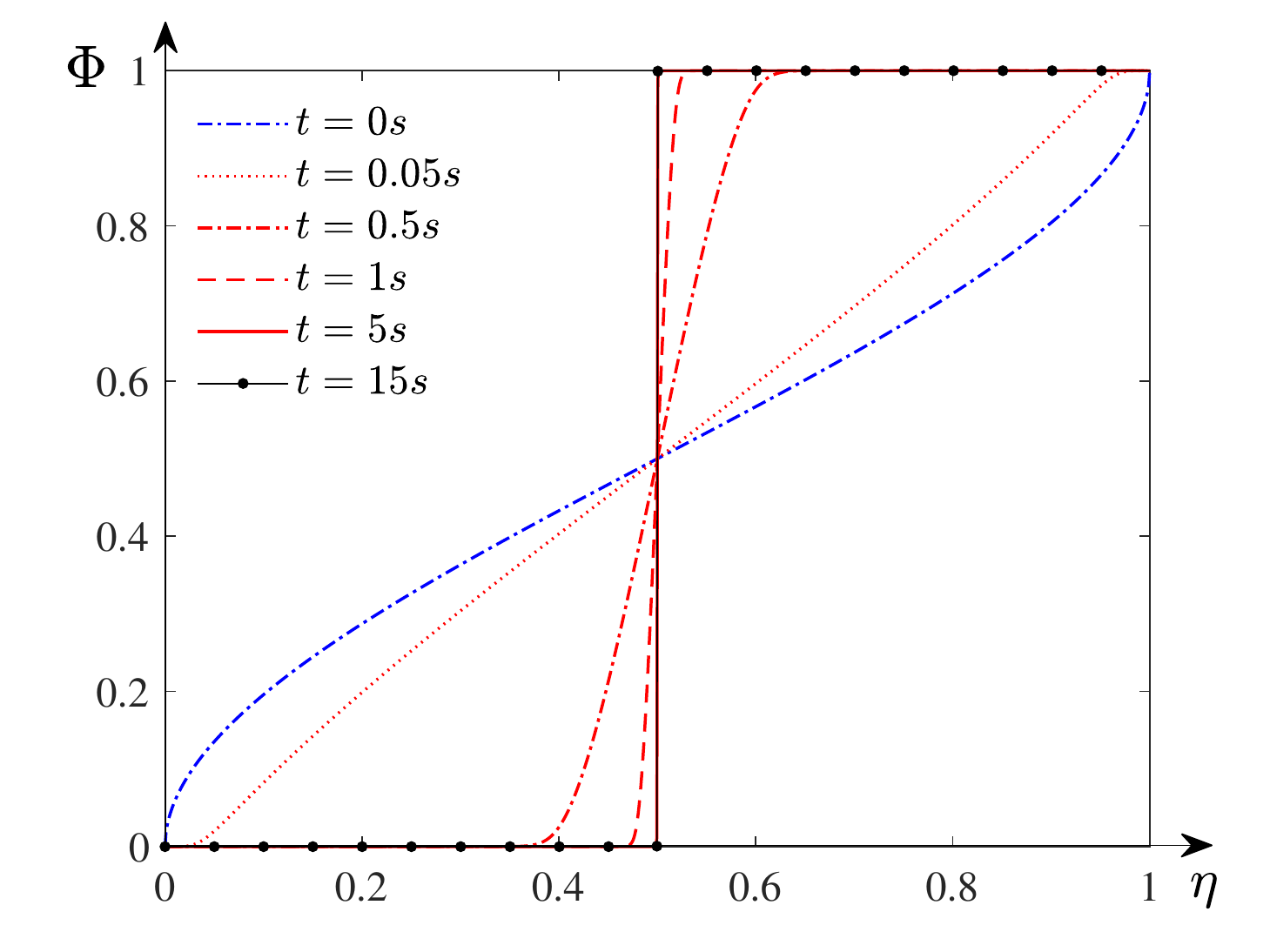}
    \caption{Convergence to steady state}
  \end{subfigure}%
  \hspace{0.01in}
  \begin{subfigure}{.327\textwidth}
    \centering
    \includegraphics[width = \linewidth]{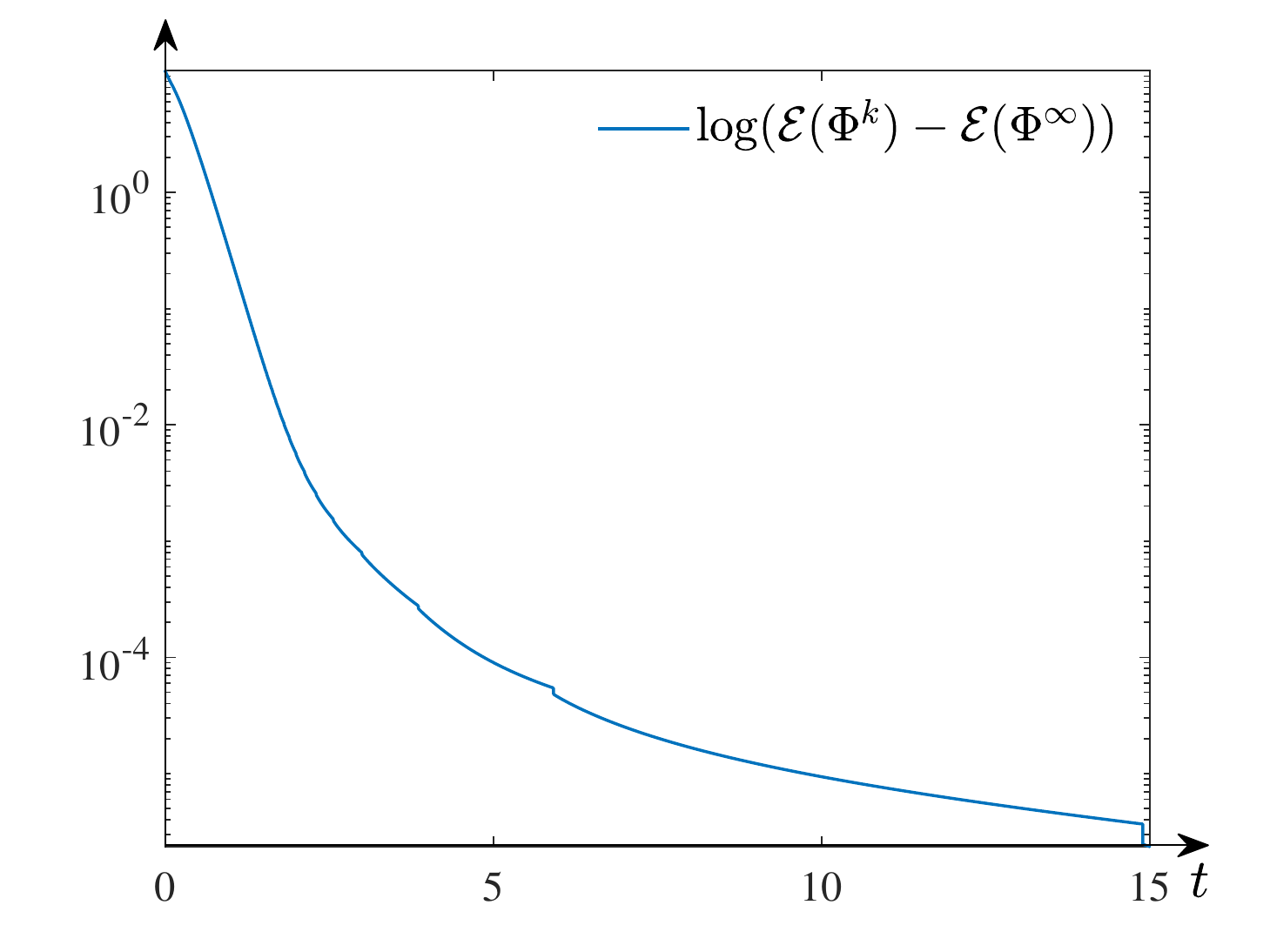}
    \caption{Stabilization of free energy}
  \end{subfigure}%
  \hspace{0.01in}
  \begin{subfigure}{.327\textwidth}
    \centering
    \includegraphics[width = \linewidth]{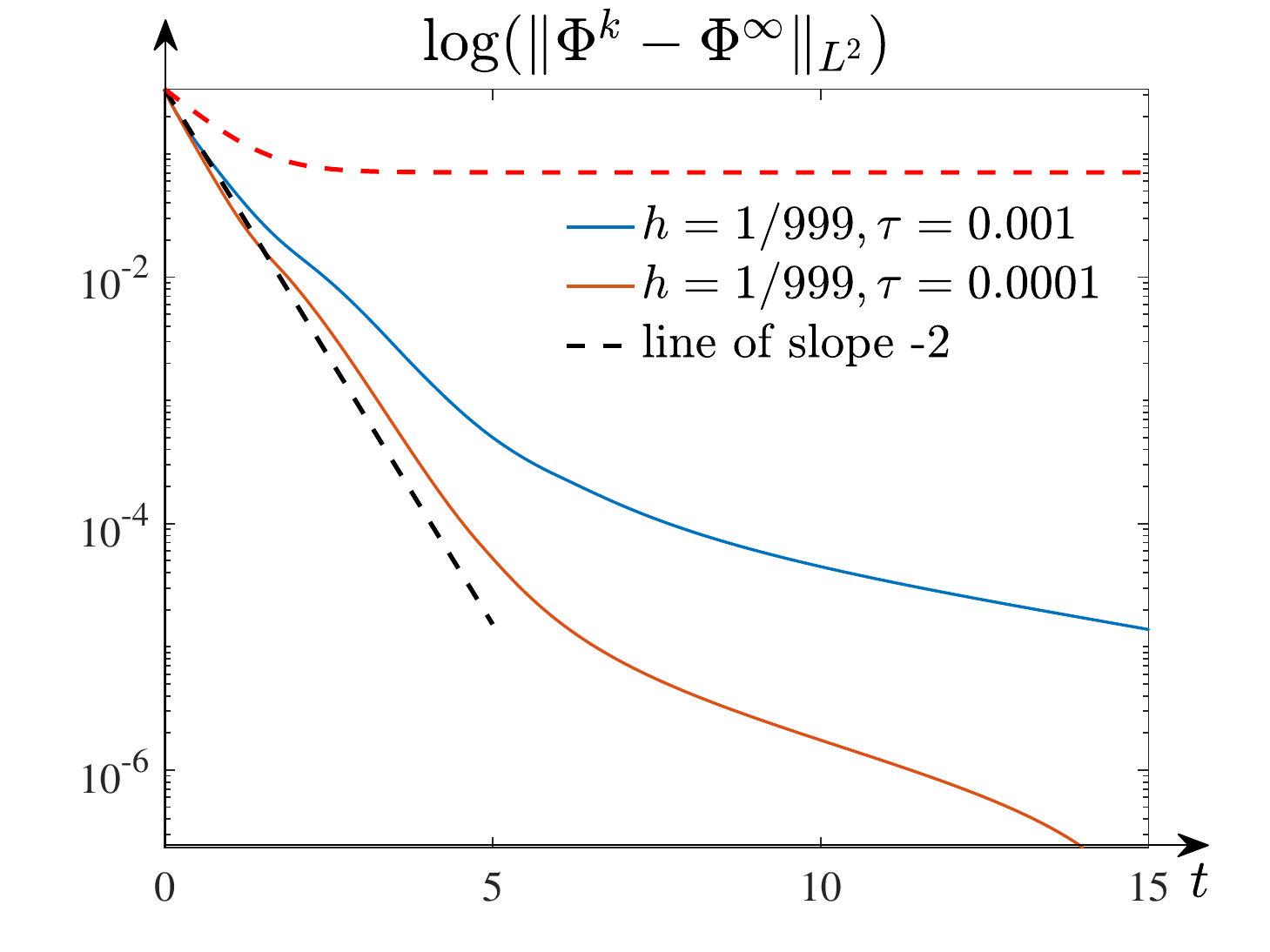}
    \caption{Exponential order of convergence}
  \end{subfigure}
  \caption{\textbf{Top}: $\kappa=2$ and $V'(x)=4x+2$. \textbf{Bottom}: $\kappa=4$ and $V'(x)=-4x+2$.  All simulations are conducted out of the initial data $f_0(x)=6x(1-x)$.  (a): Convergence of the inverse cumulative distribution function $\Phi$.
  (b): Dissipation of the discrete energy $\mathcal{E}(\Phi)$ in \eqref{discreteenergy} to that of the (proxy) steady state in logarithmic scale.
  (c): $L^2$-distance between the inverse distribution function and the numerically computed steady state (as a proxy) in logarithmic scale. }\label{figv}
\end{figure}
We remark that the proofs of our main results rely on the assumption that $V'(x)$ is of one sign for technique purpose, and the numerics in the bottom of Figure \ref{figv} favor the application of our scheme otherwise.  However, we have to exclude the relaxation of this assumption in this work, unfortunately. 

\subsection{Spatial-Temporal Dynamics}
We conclude this section by presenting several additional sets of numerical experiments on the full model \eqref{driftdiff} to illustrate and verify our main results, and to capture the spatio-temporal dynamics of the random genetic drift when analytical tools lack.  For these purposes, we choose featured initial data to showcase the dynamics of generic shrift to the variation of the potential function.  For simplicity, we assume $\kappa=2$ and consider two different fitness potential $V'(x)=2$ and $V'(x)=-3x+1$.  The step size $h=1/999$ and time step $\tau=0.001$ are fixed for all the numerical experiments. Each row in Figure \ref{fig:phi} shows the evolution of the inverse cumulative distribution function $\Phi$ of \eqref{driftdiff} corresponding to the initial data depicted in each column.  The dynamics are soon dominated by the Dirac-delta concentrations at the ends of the interval in each case, though one observes relatively different evolutionary transient phenomena such as merging of bumps and different equilibration speeds.  All these findings point to the computational/mathematical evidence that the evolutionary changes at the molecular level are caused by random genetic drift when described by a Moran process.

\begin{figure}[htb]
  \centering
\captionsetup{width=.85\linewidth}
  \begin{subfigure}{.327\textwidth}
    \centering
    \includegraphics[width =\linewidth]{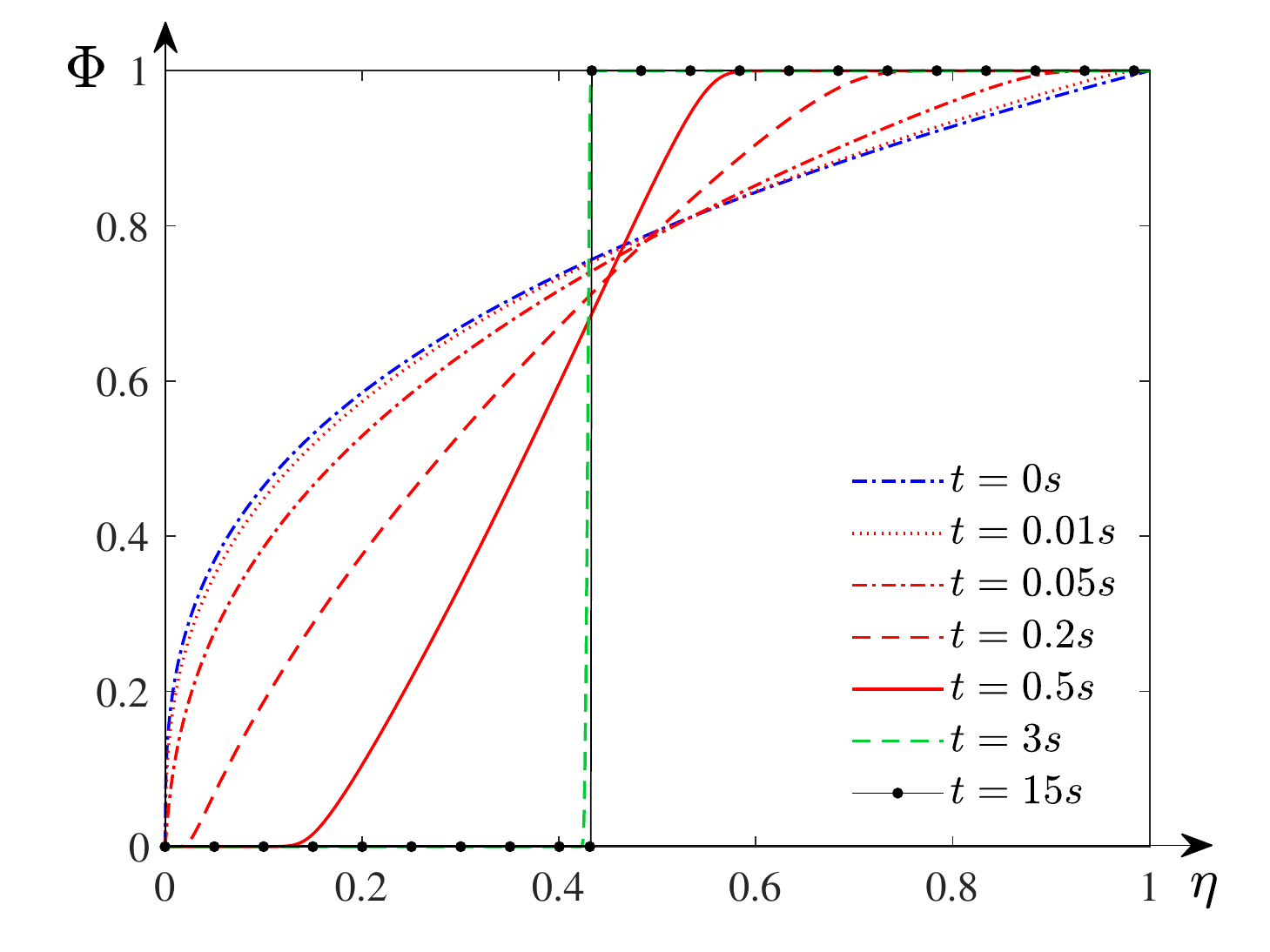}
  \end{subfigure}
  \hspace{0.01in}
  \begin{subfigure}{.327\textwidth}
    \centering
    \includegraphics[width = \linewidth]{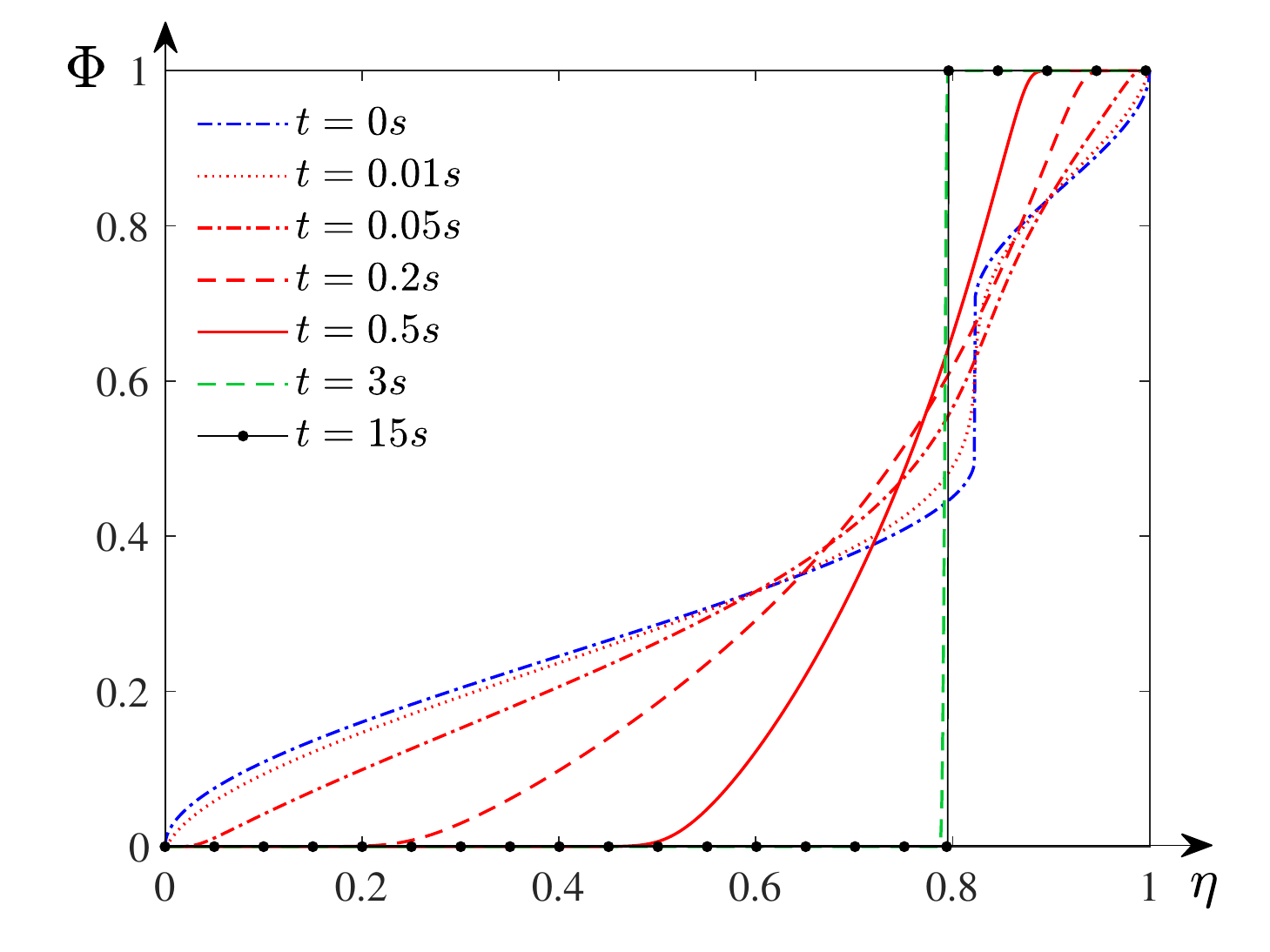}
  \end{subfigure}%
  \hspace{0.01in}
  \begin{subfigure}{.327\textwidth}
    \centering
    \includegraphics[width = \linewidth]{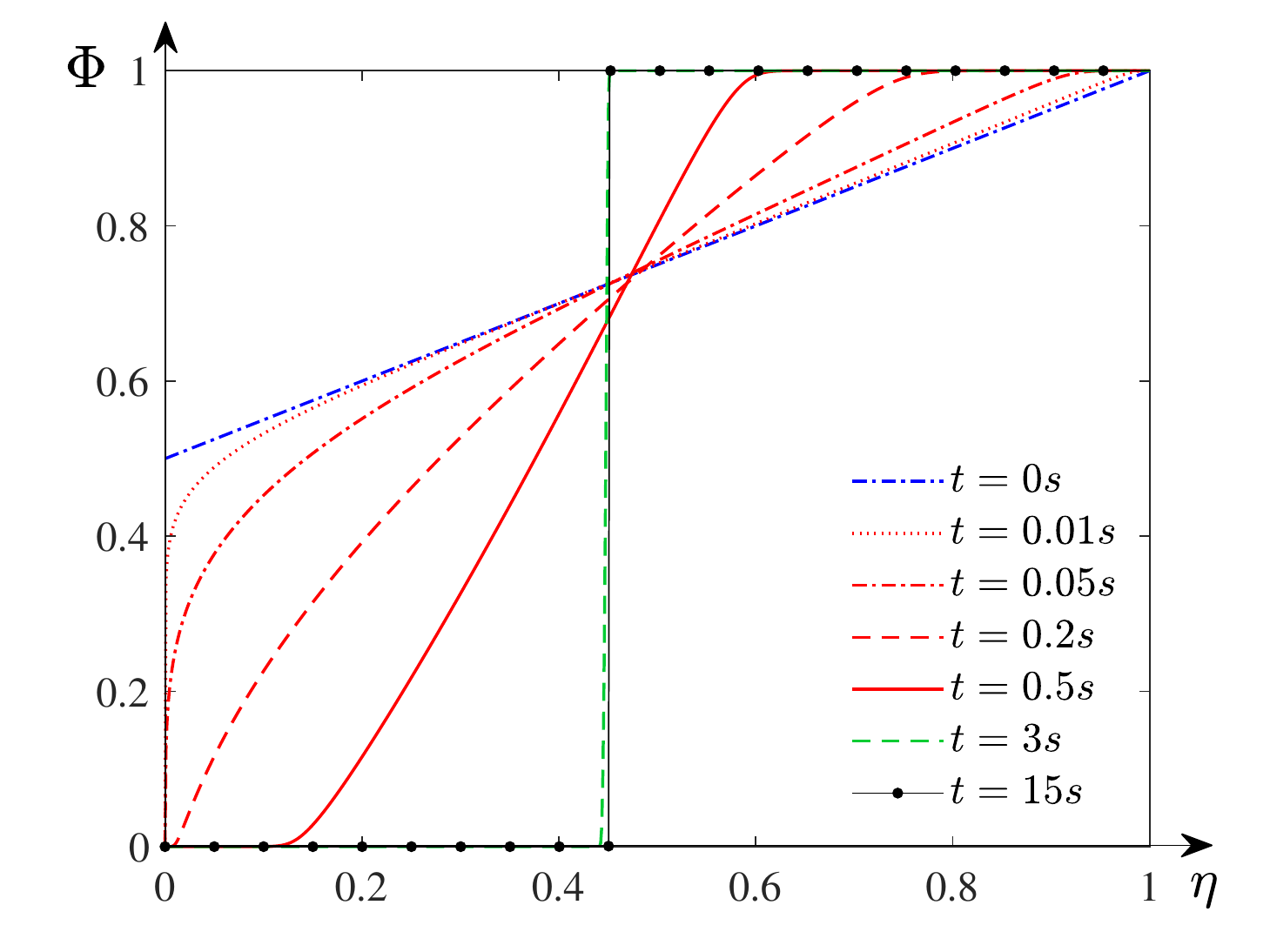}
  \end{subfigure}
  \begin{subfigure}{.327\textwidth}
    \centering
    \includegraphics[width =\linewidth]{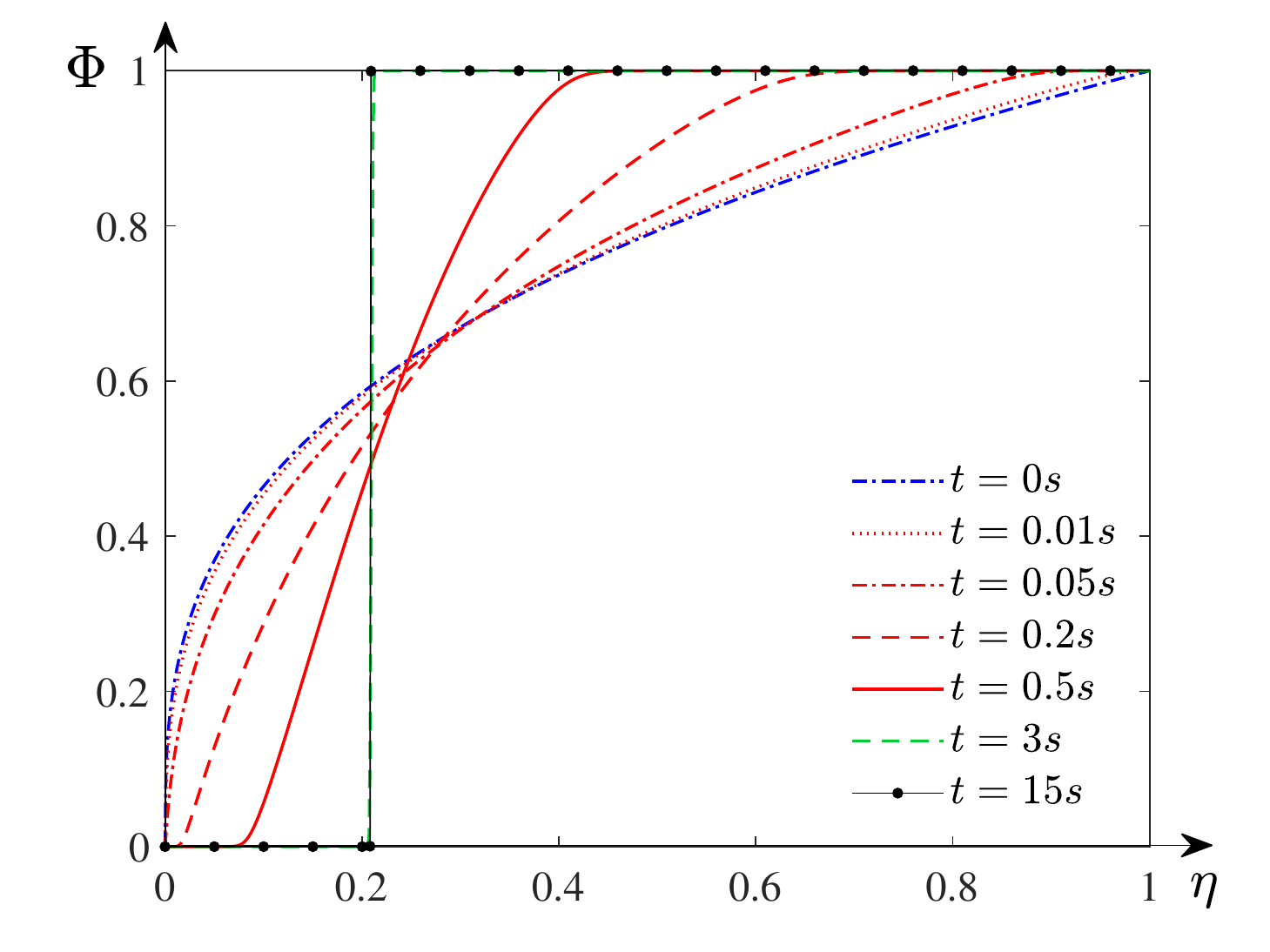}
    \caption{$f_0(x)=x^2$}
  \end{subfigure}%
  \hspace{0.01in}
  \begin{subfigure}{.327\textwidth}
    \centering
    \includegraphics[width = \linewidth]{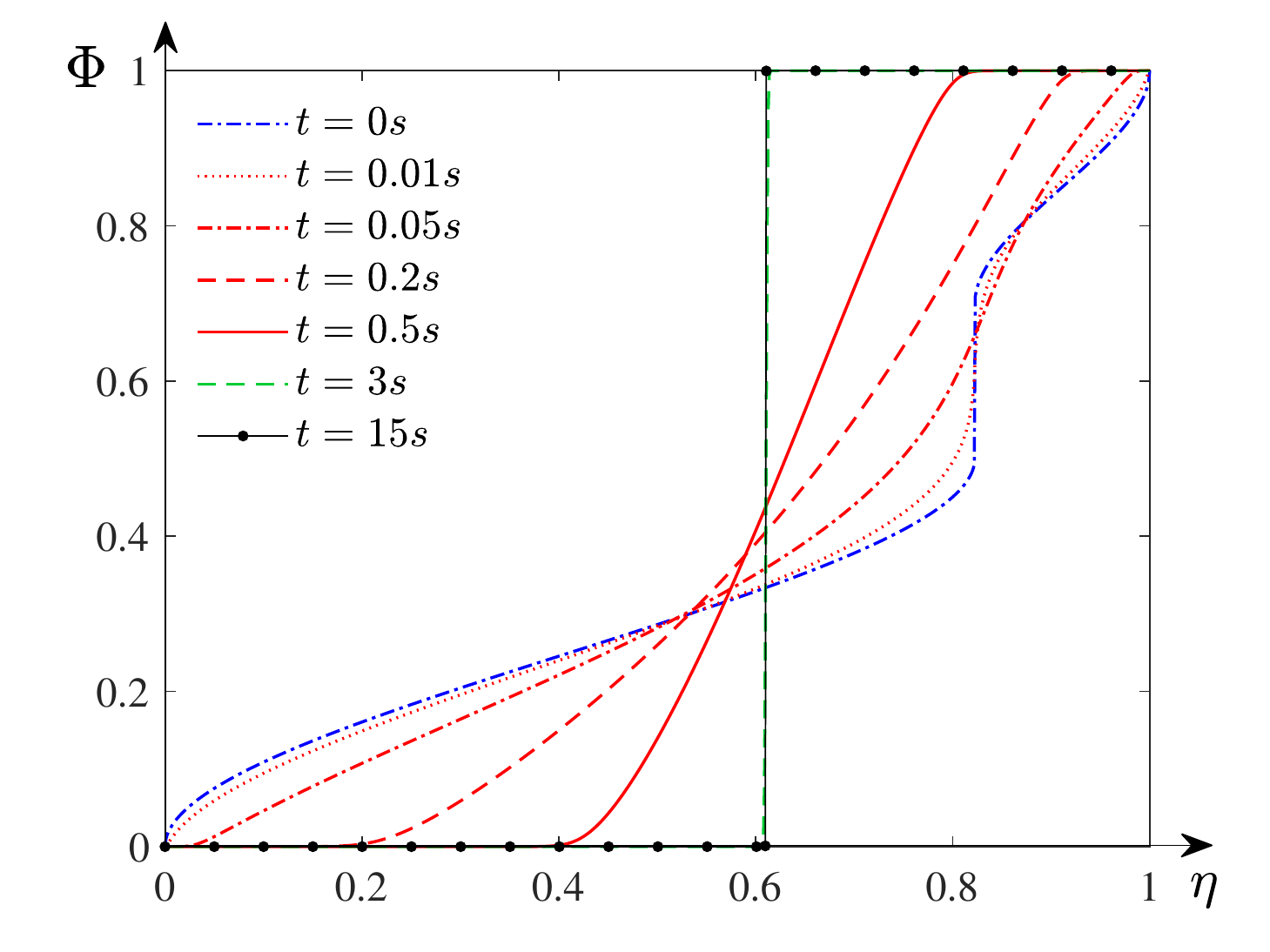}
    \caption{$f_0(x)=\max\big(0,\!x(0.5\!-\!x),\!(x\!-\!0.7)(1\!-\!x)\big)$}
  \end{subfigure}%
  \hspace{0.01in}
  \begin{subfigure}{.327\textwidth}
    \centering
    \includegraphics[width = \linewidth]{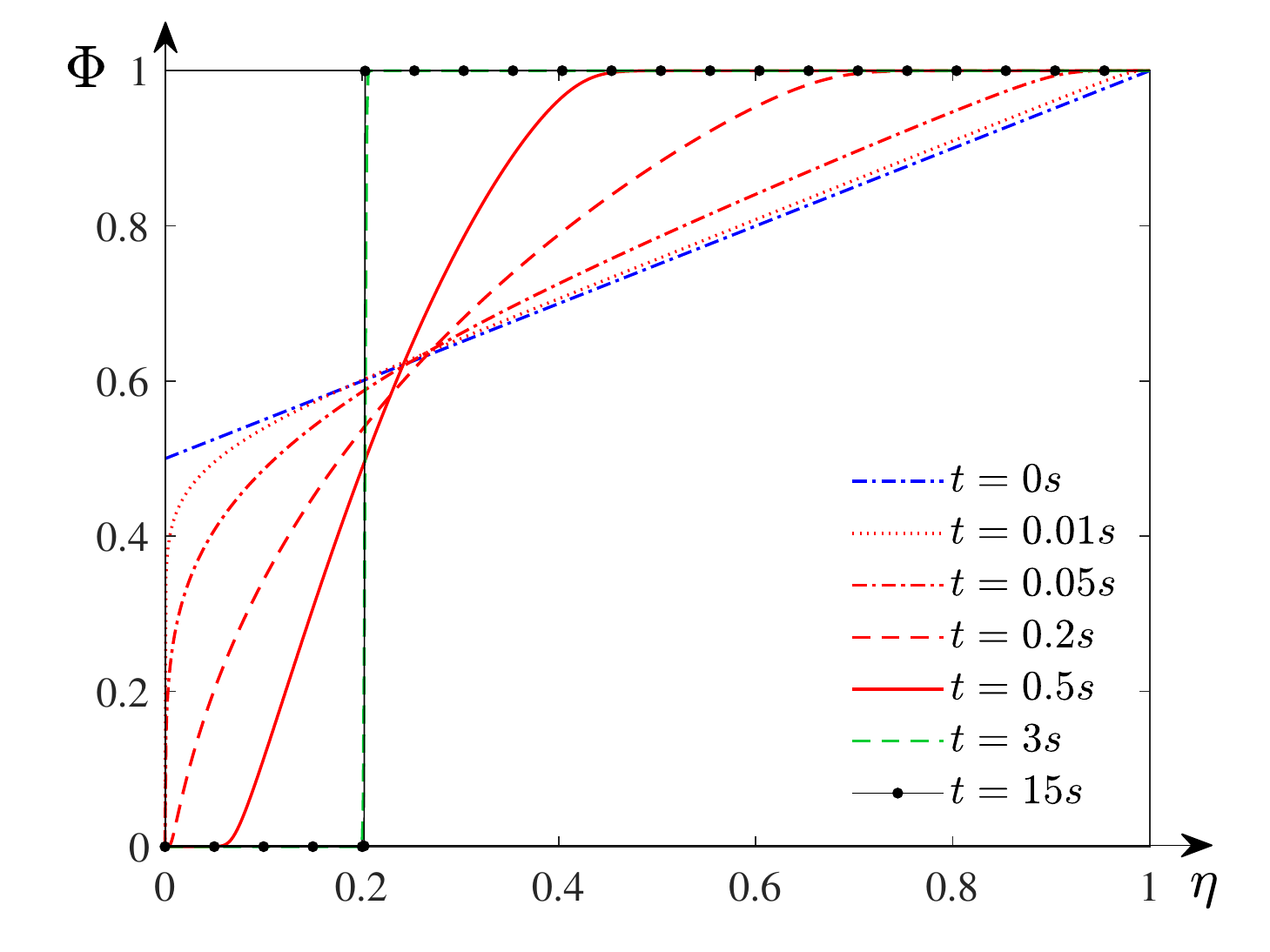}
    \caption{$f_0(x)=\mathbbm{1}_{[0.5,1]}$}
  \end{subfigure}
  \caption{Evolution of the full Replicator-Diffusion equation \eqref{driftdiff}.  \textbf{Top}: $\kappa=2$ and $V'(x)=2$. \textbf{Bottom}: $\kappa=2$ and $V'(x)=-3x+1$ for each of the specified initial data.}\label{fig:phi}
\end{figure}

Finally, we report in Table \ref{tabletheta0} the error of the computed jump location against the theoretical one given in Theorem \ref{theorem21}.  They add another layer in the robustness of our proposed scheme as the location in $\Phi$ determines the portion of the alleles in the genetic aggregation.
\renewcommand\arraystretch{1.1}
\begin{table}[H] 
\centering
\caption{Computed jump location for $\Phi_i^k$ up to a terminal time $T=15$.}
\begin{tabular}{lcccc}
\hline
initial data $f_0(x)$ & $V'(x)$  & Theoretical $\eta_0$ & Numerical $\tilde{\eta}_0$ & Error  \\ \hline
\multirow{2}{*}{$x^2$}                 & 2     & 0.4065      & 0.4324    & 0.0259 \\
       & -3$x$+1 & 0.2054      & 0.2092    & 0.0038 \\  \cline{2-5}
\multirow{2}{*}{$\max\big(0,\!x(0.5\!-\!x),\!(x\!-\!0.7)(1\!-\!x)\big)$}            & 2     & 0.7854      & 0.7948    & 0.0094 \\
          & -3$x$+1 & 0.6172      & 0.6106    & 0.0066 \\   \cline{2-5}
\multirow{2}{*}{$\mathbbm{1}_{[0.5,1]}$ }         & 2     & 0.4255      & 0.4515    & 0.0260 \\
         & -3$x$+1 & 0.1985      & 0.2032    & 0.0047 \\ \hline
\end{tabular}\label{tabletheta0}
\end{table}

\section*{Acknowledgments}
J.A. Carrillo was supported the Advanced Grant Nonlocal-CPD (Nonlocal PDEs for Complex Particle Dynamics: 	Phase Transitions, Patterns and Synchronization) of the European Research Council Executive Agency (ERC) under the European Union's Horizon 2020 research and innovation programme (grant agreement No. 883363). J.A. Carrillo was also partially supported by the EPSRC grant number EP/P031587/1. J.A. Carrillo acknowledges support through the Changjiang Visiting Professorship Scheme of the Chinese Ministry of Education.  Q. Wang is supported by Sichuan Science and Technology Program (No. 2020YJ0060). 

\bibliographystyle{
abbrv}
\bibliography{references}

\begin{thebibliography}{10}

\bibitem{gf-AGS}
L.~Ambrosio, N.~Gigli, and G.~Savar\'{e}.
\newblock {\em Gradient flows in metric spaces and in the space of probability
  measures}.
\newblock Lectures in Mathematics ETH Z\"{u}rich. Birkh\"{a}user Verlag, Basel,
  second edition, 2008.

\bibitem{BCC2008}
A.~Blanchet, V.~Calvez, and J.~A. Carrillo.
\newblock Convergence of the mass-transport steepest descent scheme for the
  subcritical patlak--keller--segel model.
\newblock {\em SIAM J. Numer. Anal.}, 46(2):691--721, 2008.

\bibitem{CKM2019}
J.~A. Carrillo, N.~Kolbe, and M.~Luk{\'a}{\v{c}}ov{\'a}-Medvid’ov{\'a}.
\newblock A hybrid mass transport finite element method for {K}eller--{S}egel
  type systems.
\newblock {\em J. Sci. Comput.}, 80(3):1777--1804, 2019.

\bibitem{CLSS10}
J.~A. Carrillo, S.~Lisini, G.~Savar\'{e}, and D.~Slep\v{c}ev.
\newblock Nonlinear mobility continuity equations and generalized displacement
  convexity.
\newblock {\em J. Funct. Anal.}, 258(4):1273--1309, 2010.

\bibitem{CMW20}
J.~A. Carrillo, D.~Matthes, and M.-T. Wolfram.
\newblock Lagrangian schemes for wasserstein gradient flows.
\newblock Handbook of Numerical Analysis. Elsevier, 2020.

\bibitem{CMV03}
J.~A. Carrillo, R.~J. McCann, and C.~Villani.
\newblock Kinetic equilibration rates for granular media and related equations:
  entropy dissipation and mass transportation estimates.
\newblock {\em Rev. Mat. Iberoamericana}, 19(3):971--1018, 2003.

\bibitem{CM2009}
J.~A. Carrillo and J.~S. Moll.
\newblock Numerical simulation of diffusive and aggregation phenomena in
  nonlinear continuity equations by evolving diffeomorphisms.
\newblock {\em SIAM J. Sci. Comput.}, 31(6):4305--4329, 2009/10.

\bibitem{CRW2016}
J.~A. Carrillo, H.~Ranetbauer, and M.-T. Wolfram.
\newblock Numerical simulation of nonlinear continuity equations by evolving
  diffeomorphisms.
\newblock {\em J. Comput. Phys.}, 327:186--202, 2016.

\bibitem{gf-CN}
F.~Cavalli and G.~Naldi.
\newblock A {W}asserstein approach to the numerical solution of the
  one-dimensional {C}ahn-{H}illiard equation.
\newblock {\em Kinet. Relat. Models}, 3(1):123--142, 2010.

\bibitem{Chalub}
F.~A. Chalub, L.~Monsaingeon, A.~M. Ribeiro, and M.~O. Souza.
\newblock Gradient flow formulations of discrete and continuous evolutionary
  models: a unifying perspective.
\newblock {\em arXiv preprint arXiv:1907.01681}, 2019.

\bibitem{Chalub2009}
F.~A. Chalub and M.~O. Souza.
\newblock From discrete to continuous evolution models: a unifying approach to
  drift-diffusion and replicator dynamics.
\newblock {\em Theor. Popul. Biol.}, 76(4):268--277, 2009.

\bibitem{Chalub2009a}
F.~A. Chalub, M.~O. Souza, et~al.
\newblock A non-standard evolution problem arising in population genetics.
\newblock {\em Commun. Math. Sci.}, 7(2):489--502, 2009.

\bibitem{DLWY2019}
C.~Duan, C.~Liu, C.~Wang, and X.~Yue.
\newblock Numerical complete solution for random genetic drift by energetic
  variational approach.
\newblock {\em ESAIM: M2AN}, 53(2):615--634, 2019.

\bibitem{Eyre}
D.~J. Eyre.
\newblock Unconditionally gradient stable time marching the cahn-hilliard
  equation.
\newblock In {\em Materials Research Society Symposium Proceedings}, volume
  529, pages 39--46. Materials Research Society, 1998.

\bibitem{Fisher1922}
R.~A. Fisher.
\newblock On the dominance ratio.
\newblock {\em Proceedings of the Royal Society of Edinburgh}, 42:321--341,
  1923.

\bibitem{HLP1934}
J.~E.~L. G.~H.~Hardy and G.~Polya.
\newblock {\em Inequalities}.
\newblock Cambridge University Press, Cambridge, 1934.

\bibitem{Gillespie2004}
J.~H. Gillespie.
\newblock {\em Population genetics: a concise guide}.
\newblock JHU Press, 2004.

\bibitem{gf-GT2}
L.~Gosse and G.~Toscani.
\newblock Identification of asymptotic decay to self-similarity for
  one-dimensional filtration equations.
\newblock {\em SIAM J. Numer. Anal.}, 43(6):2590--2606, 2006.

\bibitem{gf-GT1}
L.~Gosse and G.~Toscani.
\newblock Lagrangian numerical approximations to one-dimensional
  convolution-diffusion equations.
\newblock {\em SIAM J. Sci. Comput.}, 28(4):1203--1227, 2006.

\bibitem{gf-JKO}
R.~Jordan, D.~Kinderlehrer, and F.~Otto.
\newblock The variational formulation of the {F}okker-{P}lanck equation.
\newblock {\em SIAM J. Math. Anal.}, 29(1):1--17, 1998.

\bibitem{Kimura1955b}
M.~Kimura.
\newblock Random genetic drift in multi-allelic locus.
\newblock {\em Evolution}, pages 419--435, 1955.

\bibitem{Kimura1962}
M.~Kimura.
\newblock On the probability of fixation of mutant genes in a population.
\newblock {\em Genetics}, 47(6):713, 1962.

\bibitem{Kimura1964}
M.~Kimura.
\newblock Diffusion models in population genetics.
\newblock {\em J. Appl. Probab.}, 1(2):177--232, 1964.

\bibitem{Kimura1955a}
M.~Kimura et~al.
\newblock {\em Stochastic processes and distribution of gene frequencies under
  natural selection}.
\newblock Citeseer, 1954.

\bibitem{gf-MOfp}
D.~Matthes and H.~Osberger.
\newblock Convergence of a variational {L}agrangian scheme for a nonlinear
  drift diffusion equation.
\newblock {\em ESAIM Math. Model. Numer. Anal.}, 48(3):697--726, 2014.

\bibitem{gf-MOdlss}
D.~Matthes and H.~Osberger.
\newblock A convergent {L}agrangian discretization for a nonlinear fourth-order
  equation.
\newblock {\em Found. Comput. Math.}, 17(1):73--126, 2017.

\bibitem{Moran1962}
Moran and P.~A. Pierce.
\newblock {\em The statistical processes of evolutionary theory}.
\newblock Clarendon Press; Oxford University Press., 1962.

\bibitem{Moran}
P.~A.~P. Moran.
\newblock Random processes in genetics.
\newblock In {\em Mathematical proceedings of the cambridge philosophical
  society}, volume~54, pages 60--71. Cambridge University Press, 1958.

\bibitem{Nesterov}
Y.~Nesterov and A.~Nemirovskii.
\newblock {\em Interior-point polynomial algorithms in convex programming}.
\newblock SIAM, 1994.

\bibitem{gf-Osberger}
H.~Osberger.
\newblock Long-time behavior of a fully discrete {L}agrangian scheme for a
  family of fourth order equations.
\newblock {\em Discrete Contin. Dyn. Syst.}, 37(1):405--434, 2017.

\bibitem{gf-MOtf}
H.~Osberger and D.~Matthes.
\newblock Convergence of a fully discrete variational scheme for a thin-film
  equation.
\newblock In {\em Topological optimization and optimal transport}, volume~17 of
  {\em Radon Ser. Comput. Appl. Math.}, pages 356--399. De Gruyter, Berlin,
  2017.

\bibitem{Rice1987}
W.~R. Rice.
\newblock Genetic hitchhiking and the evolution of reduced genetic activity of
  the y sex chromosome.
\newblock {\em Genetics}, 116(1):161--167, 1987.

\bibitem{StarSpencer2013}
B.~Star and H.~G. Spencer.
\newblock Effects of genetic drift and gene flow on the selective maintenance
  of genetic variation.
\newblock {\em Genetics}, 194(1):235--244, 2013.

\bibitem{Tran2013}
T.~D. Tran, J.~Hofrichter, and J.~Jost.
\newblock An introduction to the mathematical structure of the wright--fisher
  model of population genetics.
\newblock {\em Theor. Biosci.}, 132(2):73--82, 2013.

\bibitem{Traulsen2005}
A.~Traulsen, J.~C. Claussen, and C.~Hauert.
\newblock Coevolutionary dynamics: from finite to infinite populations.
\newblock {\em Phys. Rev. Lett.}, 95(23):238701, 2005.

\bibitem{Vi03}
C.~Villani.
\newblock {\em Topics in optimal transportation}, volume~58 of {\em Graduate
  Studies in Mathematics}.
\newblock American Mathematical Society, Providence, RI, 2003.

\bibitem{Waxman2009}
D.~Waxman.
\newblock Fixation at a locus with multiple alleles: Structure and solution of
  the wright fisher model.
\newblock {\em J. Theor. Biol.}, 257(2):245--251, 2009.

\bibitem{gf-WW}
M.~Westdickenberg and J.~Wilkening.
\newblock Variational particle schemes for the porous medium equation and for
  the system of isentropic {E}uler equations.
\newblock {\em M2AN Math. Model. Numer. Anal.}, 44(1):133--166, 2010.

\bibitem{Wright1929}
S.~Wright.
\newblock The evolution of dominance.
\newblock {\em The American Naturalist}, 63(689):556--561, 1929.

\bibitem{Wright1937}
S.~Wright.
\newblock The distribution of gene frequencies in populations.
\newblock {\em Proc. Natl. Acad. Sci. U.S.A.}, 23(6):307, 1937.

\bibitem{Wright1945}
S.~Wright.
\newblock The differential equation of the distribution of gene frequencies.
\newblock {\em Proc. Natl. Acad. Sci. U.S.A.}, 31(12):382, 1945.

\bibitem{XCLZY20192019}
S.~Xu, M.~Chen, C.~Liu, R.~Zhang, and X.~Yue.
\newblock Behavior of different numerical schemes for random genetic drift.
\newblock {\em BIT Numer. Math.}, 59(3):797--821, 2019.

\bibitem{XCLY2019}
S.~Xu, X.~Chen, C.~Liu, and X.~Yue.
\newblock Numerical method for multi-alleles genetic drift problem.
\newblock {\em SIAM J. Numer. Anal.}, 57(4):1770--1788, 2019.

\end{thebibliography}
\end{document}